\newtheorem{theorem}{Theorem}[section]
\newtheorem{lemma}[theorem]{Lemma}
\theoremstyle{definition}
\newtheorem{pro}[theorem]{Proposition}
\newtheorem{cor}[theorem]{Corollary}
\theoremstyle{remark}
\newtheorem{rem}[theorem]{Remark}
\numberwithin{equation}{section}
\begin{document}

\title[Minimally almost periodic group topology on Abelian groups]{Minimally almost periodic group topology on infinite countable Abelian groups: A solution to Comfort's problem}

\author{S.S. Gabriyelyan}
\address{Department of Mathematics, Ben-Gurion University of the Negev, Beer-Sheva P.O. 653, Israel}
\email{saak@math.bgu.ac.il}
\thanks{The author was partially supported  by Israel Ministry of Immigrant Absorption.}

\subjclass[2000]{Primary 22A10, 43A40; Secondary 54H11}



\keywords{Countable group, characterized group, $T$-sequence, dual group, von Neumann radical}

\begin{abstract}
For any countable subgroup $H$ of an unbounded Abelian group $G$ there is a complete  Hausdorff group topology $\tau$ on $G$ such that $H$ is the von Neumann radical of $(G,\tau)$. In particular, we obtain the positive answer to Comfort's question: any unbounded countable Abelian group admits a complete Hausdorff minimally almost periodic (MinAP) group topology. A bounded  infinite Abelian group  admits a MinAP group topology if and only if all its leading Ulm-Kaplansky invariants are infinite. If, in addition,  $G$ is countably infinite, a MinAP group topology can be chosen to be complete.
\end{abstract}

\maketitle

\section{Introduction}

{\bf I. Questions and History.} For an Abelian topological group $G$, $G^{\wedge}$ denotes the group of all continuous characters on $G$ endowed with the compact-open topology.  Denote by $\mathbf{n}(G) = \cap_{\chi\in G^{\wedge}} {\rm ker} \chi$ the von Neumann radical of $G$. Following  von Neumann \cite{Neu}, if $\mathbf{n}(G) = G$, the group $G$ is called minimally almost periodic (MinAP), and $G$ is called maximally almost periodic (MAP) if $\mathbf{n}(G) = 0$.

Let $G$ be an infinite Abelian group. Denote  by $\mathcal{NR}(G)$ [respectively $\mathcal{NRC}(G)]$ the set of all subgroups $H$ of $G$  for which there exists a [respectively complete] non-discrete Hausdorff group topology $\tau$ on $G$ such that the von Neumann radical of $(G,\tau)$ is $H$, i.e., $\mathbf{n} (G,\tau)=H$. It is clear that $\mathcal{NRC}(G)\subseteq \mathcal{NR}(G)$.

Let $G$ be an Abelian topological group. The richness of its dual group $G^\wedge$ is one of the most important properties of $G$. The von Neumann radical measures this richness, thus the following general question is important:
\begin{enumerate}
\item[] {\bf Question 1.} (Problem 2 \cite{Ga2}) {\it Describe the sets $\mathcal{NR}(G)$ and $\mathcal{NRC}(G).$}
\end{enumerate}

The following proposition (proved in Section \ref{s3}) is a simple corollary of the main result of \cite{CRT}:
\begin{pro} \label{p00}
{\it Every infinite Abelian group admits a  complete non-trivial Hausdorff group topology with  trivial von Neumann radical.}
\end{pro}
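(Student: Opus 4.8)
The plan is to obtain the statement as a direct reading of the main result of \cite{CRT}, once the hypothesis ``trivial von Neumann radical'' has been rephrased in terms of characters. Indeed, since $\mathbf{n}(G,\tau)=\bigcap_{\chi\in(G,\tau)^{\wedge}}\ker\chi$, the condition $\mathbf{n}(G,\tau)=0$ means exactly that the continuous characters of $(G,\tau)$ separate the points of $G$; that is, triviality of the von Neumann radical is nothing but maximal almost periodicity. Thus what I must exhibit, on an arbitrary infinite Abelian group $G$, is a complete, non-discrete, Hausdorff group topology whose continuous characters separate points, and this is precisely (a special instance of) the conclusion furnished by \cite{CRT}.

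Accordingly, I would invoke the construction of \cite{CRT}, which endows every infinite Abelian group $G$ with a complete Hausdorff group topology $\tau$ that is non-discrete and maximally almost periodic. Granting this, $(G,\tau)^{\wedge}$ separates the points of $G$, so $\mathbf{n}(G,\tau)=\bigcap_{\chi}\ker\chi=0$; as $\tau$ is complete, Hausdorff and non-trivial by construction, the proposition follows immediately.

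It is instructive to isolate why the ``MAP'' half of the statement is the inexpensive one, completeness being the real content. For any infinite Abelian group the Bohr topology $\tau_{b}$ --- the initial topology induced by all homomorphisms of $G$ into the circle group --- is already Hausdorff, non-discrete and maximally almost periodic, because the homomorphisms of an Abelian group into the circle separate its points. Moreover, every $\tau_{b}$-continuous character remains continuous for any finer group topology $\tau\supseteq\tau_{b}$, so that $(G,\tau)^{\wedge}\supseteq(G,\tau_{b})^{\wedge}$ still separates points and $\mathbf{n}(G,\tau)=0$ holds automatically. Hence the entire difficulty reduces to producing a non-discrete group topology, refining $\tau_{b}$ (or at least carrying enough characters), that is in addition complete --- the Bohr topology itself being merely precompact, hence far from complete on an infinite group.

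This last point is exactly the obstacle handled in \cite{CRT}, and it is genuinely delicate: when $G$ is countably infinite no non-discrete complete topology on it can be metrizable, since a countable completely metrizable group would be a countable Baire group and would therefore possess an isolated point, forcing discreteness. The required topology must thus be complete without being first countable, and the substance I am importing from \cite{CRT} lies precisely here --- in building such a complete, non-metrizable, non-discrete group topology whose supply of continuous characters survives the passage to completeness. Everything beyond that is the one-line dictionary between ``separating characters'' and ``$\mathbf{n}=0$'' recorded at the outset.
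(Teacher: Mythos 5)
There is a genuine gap here, and it lies exactly where you locate ``the real content'': completeness. You invoke \cite{CRT} as if its main theorem produced a \emph{complete} non-discrete MAP group topology on every infinite Abelian group. It does not. What \cite{CRT} supplies (and what the paper actually uses) is a non-trivial $TB$-sequence $\mathbf{u}$ on $G$, i.e.\ a sequence converging to $0$ in some \emph{precompact} Hausdorff group topology; a precompact non-compact topology is never complete, so completeness cannot be read off from \cite{CRT} at all. The paper manufactures completeness by a different mechanism: having the $TB$-sequence $\mathbf{u}$, it passes to the Protasov--Zelenyuk topology $(G,\mathbf{u})$, the \emph{finest} group topology in which $u_n\to 0$, which is complete by \cite[Theorem 2.3.11]{ZP2}; and it then uses the equivalence from \cite{Ga1} that $\mathbf{u}$ is a $TB$-sequence if and only if it is a $T$-sequence with $(G,\mathbf{u})$ maximally almost periodic, which is what guarantees $\mathbf{n}(G,\mathbf{u})=0$. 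Your proof, as written, attributes this whole chain to a single citation that only furnishes its first link.

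A secondary point: your heuristic that MAP is ``inexpensive'' because any refinement of the Bohr topology $\tau_b$ keeps separating characters is true but misleading for this construction. The topology $(G,\mathbf{u})$ is \emph{not} finer than $\tau_b$ (it cannot be: $\tau_b$ on a discrete Abelian group admits no non-trivial convergent sequences, whereas $u_n\to 0$ non-trivially in $(G,\mathbf{u})$). The characters that survive are exactly those $\chi$ with $\chi(u_n)\to 1$, i.e.\ $s_{\mathbf{u}}\bigl((G_d)^{\wedge}\bigr)$, and the whole point of choosing $\mathbf{u}$ to be a $TB$-sequence is that this proper subgroup of $(G_d)^{\wedge}$ still separates points. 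So the MAP half is not automatic either; it is precisely the $TB$-property of the sequence. Your dictionary between ``separating characters'' and ``$\mathbf{n}=0$'' is of course correct, and your remark that a countable complete topology cannot be metrizable is a sound observation, but the proof needs the three-step route above rather than a single appeal to \cite{CRT}.
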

Thus, the trivial group $\{ 0\}$ belongs to $\mathcal{NRC}(G)$ for every infinite Abelian group $G$, i.e., $\mathcal{NRC}(G)$ is always not empty. A much deeper question is whether any infinite Abelian group admits a Hausdorff group topology with  {\it non-zero} von Neumann radical. The positive answer was given by Ajtai, Havas and Koml\'{o}s \cite{AHK}. Using the method of $T$-sequences, Protasov and Zelenyuk \cite{ZP1} proved that every infinite Abelian group admits a {\it complete}  Hausdorff group topology for which characters do not separate points, i.e., $\mathcal{NRC}(G)\not= \{ \{ 0\}\}$ for every infinite $G$.

Question 1 has two interesting extreme cases. Denote by $\mathcal{S}(G)$ the set of all subgroups of an infinite Abelian group $G$.
\begin{enumerate}
\item[] {\bf Question 2.} (Problem 4 \cite{Ga2}) {\it Describe all infinite Abelian groups $G$ such that $\mathcal{NR}(G)= \mathcal{S}(G)$ [respectively $\mathcal{NRC}(G)=\mathcal{S}(G)$].}
\end{enumerate}
The second and, perhaps, the most interesting special case of Question 1 is the following:
\begin{enumerate}
\item[] {\bf Question 3.} (Problem 5 \cite{Ga2}) {\it Describe all infinite Abelian groups $G$ such that $G\in \mathcal{NR}(G)$ (or $G\in \mathcal{NRC}(G)$).}
\end{enumerate}

Note that the fact of existence of Abelian groups admitting no nonzero continuous characters is known long ago. It is known \cite{HZ, Sm}  that each character of a linear topological space $L$ over $\mathbb{R}$, where $L$ is regarded as an Abelian topological group under +, is defined by a continuous linear functional on $L$. So, if $L$ does not admit nontrivial continuous functionals, then it is a MinAP group. The most famous such spaces are $L^p$ with $0<p<1$ \cite{Day}, another examples see  \cite[23.32]{HR1}.  Nienhuys \cite{Ni1} showed the existence of a metric solenoidal monothetic MinAP group (see also \cite{Ni2}). In \cite{AHK} the authors proved the main result showing that the following groups admit a MinAP group topology: 1) the group $\mathbb{Z}$ of integers; 2) the Pr\"{u}fer group $\mathbb{Z}(p^\infty)$, where $p$ is a prime number; 3) direct sums $\bigoplus_{i=1}^\infty \mathbb{Z} (p_i)$ of cyclic groups $\mathbb{Z} (p_i)$ of order $p_i$, where all $p_i$ are prime numbers and either $p_1=p_2=\dots$ or $p_1 <p_2 <\dots$.  Prodanov \cite{Pro} gave an elementary example of a MinAP group. Protasov \cite{Pr1} posed the question whether every infinite Abelian group admits a minimally almost periodic group topology. Using a result of Graev \cite{Gra}, Remus \cite{Rem} proved that for every natural number $n$ there exists a connected MinAP group which is algebraically generated by elements of order $n$. On the other hand, he gave (see \cite{Com}) a simple example of a bounded group $G$ which does not admit any Hausdorff group topology $\tau$ such that $(G,\tau)$ is minimally almost periodic. So, in general, for bounded groups the answer to Protasov's question is negative. This justifies the following problems:

\begin{itemize}
\item[] {\bf Question 4.} (Comfort's Problem 521 \cite{Com}) {\it Does every Abelian group which is not of bounded exponent admit a minimally almost periodic topological group topology? What about
    the countable case?}
\item[] {\bf Question 5.} (Question 2.6.1 \cite{ZP2}) {\it Let $G$ be a torsion free countable group. Does there exist a Hausdorff group topology on $G$ with only zero character?}
\end{itemize}
Corollary \ref{cc2} (see below) justifies the following question:
\begin{itemize}
\item[] {\bf Question 6.} {\it Let a subgroup $H$ of an Abelian group $G$ belong to $\mathcal{NR}(G)$ [respectively $\mathcal{NRC}(G)$]. Is it true that $\mathcal{S}(H) \subset \mathcal{NR}(G)$ [respectively $\mathcal{S}(H) \subset \mathcal{NRC}(G)$]? In particular, let an unbounded Abelian group $G$ admit a [respectively complete] minimally almost periodic group topology. Is it true that $\mathcal{NR}(G) = \mathcal{S}(G)$ [respectively $\mathcal{NRC}(G) = \mathcal{S}(G)$]?}
\end{itemize}
In other words, in those two particle cases Corollary \ref{cc2} shows an equivalence of Questions 2 and 3. So, if the answer to Comfort's question is ``yes'', then Question 6 also has the positive answer. In such a case we obtain that, if an Abelian group $G$ admits a MinAP group topology, then for any its subgroup $H$ there is a Hausdorff group topology $\tau$ on $G$ such that $\mathbf{n}(G,\tau)=H$.

Taking into account that any Abelian group can be embed into a compact one,  Dikranjan asked:
\begin{itemize}
\item[] {\bf Question 7.} (Dikranjan) {\it Let an unbounded  Abelian group $G$ admit a compact group topology. Does it admit also a MinAP group topology? What about the groups of the form $\prod_{(n,p) \in E} \left(\mathbb{Z} (p^{n} ) \right)^{k_{n,p}}$ or $\Delta_p \times F$, where $\Delta_p$ is the groups of $p$-adic integers,  $F$ is a finite group, $E$ is an infinite subset of $\mathbb{N}\times P$, $P$ is the set of all primes and all $k_{n,p}$ are arbitrary positive integers?}
\end{itemize}
We also select the next problem:
\begin{itemize}
\item[] {\bf Question 8.} {\it Does every uncountable indecomposable Abelian group admit a MinAP group topology?}
\end{itemize}
Note that, by Baer's theorem  \cite[43.1]{Fuc}, $\Delta_p$ is continual and indecomposable.

Another important special and non-trivial case of Question 1 is when does $\mathcal{NR}(G)$ contain finite non-trivial groups. It was mentioned, without a proof, by Dikranjan, Milan and Tonolo \cite{DMT}, that the Pr\"{u}fer group $\mathbb{Z}(p^\infty)$ may admit such a topology. Luk\'{a}cs \cite{Luk} gave a proof of this fact (for every prime $p>2)$ and  called such groups almost maximally almost-periodic (AMAP). He proved also that infinite direct sums of finite groups are AMAP. These results were generalized in \cite{Ngu}.
Obviously, an AMAP group contains a non-trivial finite subgroup, so it cannot be torsion free. Therefore, an Abelian group admitting an AMAP group topology is not torsion free. It turns out that the converse assertion is also true:
\begin{itemize}
\item[] {\bf Theorem A.} \cite{Ga2} {\it An Abelian group admits an AMAP group topology iff it is not torsion free.}
\end{itemize}

The exponent of an Abelian group $G$ is denoted by $\exp G$.
A complete characterization of those finitely generated subgroups of an infinite Abelian group $G$ which are the von Neumann radical for some Hausdorff group topology on $G$ is given in \cite{Ga3}:
\begin{itemize}
\item[] {\bf Theorem B.} \cite{Ga3} {\it Let $H$ be a finitely generated subgroups of an infinite Abelian group $G$.
    \begin{enumerate}
    \item If $\exp G=\infty$, then $H\in \mathcal{NRC}(G)$.
    \item If $\exp G<\infty$, then $H\in \mathcal{NR}(G)$ iff $H\in \mathcal{NRC}(G)$ iff $G$ contains a subgroup of the form $\mathbb{Z} (\exp (H))^{(\omega)}$.
    \end{enumerate} }
\end{itemize}

{\bf II. Main results.} In this paper we give a complete answer to Question 1 for bounded groups and countably infinite unbounded ones. The following three theorems  are main in the article:
\begin{theorem} \label{t2}
Let $G$ be an infinite bounded Abelian group and $H$ its nonzero subgroup. Then $H\in \mathcal{NR}(G)$ if and only if the group $G$ contains a subgroup of the form $\mathbb{Z} (\exp (H))^{(\omega)}$. In the particular case when $H$ is countable, a topology $\tau$ such that $\mathbf{n}(G,\tau) =H$ can be chosen to be complete.
\end{theorem}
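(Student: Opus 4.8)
\emph{Plan.}

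\textbf{Necessity.} I would first reduce the ``only if'' direction to a single prime and a single element. Suppose $\mathbf{n}(G,\tau)=H$ for some Hausdorff group topology $\tau$, and put $m=\exp H$. Since $H$ is bounded, its exponent is attained: there is $h\in H$ with $\mathrm{ord}(h)=m$, and $h\in\mathbf{n}(G,\tau)$. Writing $m=\prod_{p\mid m}p^{a_p}$ and using $G=\bigoplus_p G_p$, I would argue the contrapositive: if $G$ does not contain $\mathbb{Z}(m)^{(\omega)}$ then, because $\mathbb{Z}(p^{a})^{(\omega)}$ embeds in $G_p$ exactly when $\dim_{\mathbb F_p}(p^{a-1}G_p)[p]$ is infinite, there is a prime $p\mid m$, with $p^{a}\,\|\,m$, such that the direct sum $B$ of those cyclic summands of $G_p$ of order $\ge p^{a}$ is \emph{finite}. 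Then $G=B\oplus D$ with $\exp(D_p)\le p^{a-1}$ and $D\supseteq G_{p'}$. The $p$-component $x:=h_p$ of $h$ has order $p^{a}$, and its $B$-coordinate $x_B$ still has order $p^{a}$.

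The point is now to produce, for \emph{any} such $\tau$, a continuous character not vanishing on $x$ (hence not on $h$). Let $\exp G=\prod_q q^{c_q}$ and set $N=p^{a-1}\prod_{q\ne p}q^{c_q}$. Multiplication by $N$, $\psi_N\colon g\mapsto Ng$, is continuous in every group topology; it kills $G_{p'}$ (via the coprime factor) and $D_p$ (via the factor $p^{a-1}$), while on the $p$-group $B$ the coprime factor acts invertibly, so $\psi_N(G)=NG=p^{a-1}B$ is \emph{finite} and $\psi_N(x)=p^{a-1}\big(\prod_{q\ne p}q^{c_q}\big)x_B\ne 0$. As $F:=NG$ is a finite, hence discrete, subgroup, every homomorphism $F\to\mathbb T$ is $\tau$-continuous; choosing (by Pontryagin duality for finite groups) $\eta\colon F\to\mathbb T$ with $\eta(\psi_N(x))\ne 0$ gives the continuous character $\chi=\eta\circ\psi_N$ with $\chi(x)\ne 0$. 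This contradicts $x\in\langle h\rangle\subseteq\mathbf{n}(G,\tau)$, and necessity follows. (Equivalently, one may feed the finite subgroup so produced into Theorem~B.)

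\textbf{Sufficiency.} For the converse I would build the topology from the reservoir $R:=\mathbb{Z}(m)^{(\omega)}\le G$ via the Protasov--Zelenyuk method of $T$-sequences, for which the continuous characters of the finest group topology $\tau_{(a_n)}$ making $a_n\to 0$ are exactly $\{\chi\in G^{\wedge}:\chi(a_n)\to 0\}$, so that $\mathbf{n}(G,\tau_{(a_n)})=\bigcap\{\ker\chi:\chi(a_n)\to 0\}$. Assuming first that $H$ is countable, enumerate $H=\{h_1,h_2,\dots\}$ and split $R$ into independent infinite blocks $R=\bigoplus_i R_i$, each $R_i\cong\mathbb{Z}(m)^{(\omega)}$ and (after an algebraic adjustment) independent of $H$. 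The core is a charging lemma in the spirit of Ajtai--Havas--Koml\'os: for an element $h$ of order $d\mid m$ and an independent copy of $\mathbb{Z}(m)^{(\omega)}$ there is a $T$-sequence $\sigma(h)$ inside $\langle h\rangle\oplus\mathbb{Z}(m)^{(\omega)}$ for which $\chi(\sigma(h))\to 0$ forces $\chi(h)=0$ while imposing \emph{no} other constraint. Interleaving the $\sigma(h_i)$ (built in the disjoint blocks $R_i$) into one sequence $(a_n)$, arranged so that $\chi(a_n)\to 0$ holds iff $\chi(\sigma(h_i))\to 0$ for every $i$, yields a $T$-sequence with $\chi|_H=0$ for every continuous $\chi$, i.e. $H\subseteq\mathbf{n}$.

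The delicate half, and the step I expect to be the main obstacle, is the reverse inclusion $\mathbf{n}\subseteq H$: one must check that the charging blocks, while annihilating all of $H$, do not accidentally force any continuous character to vanish on a point outside $H$, so that the continuous characters still separate the points of $G/H$. This is where the independence of the blocks $R_i$ and the ``single free parameter'' design of each $\sigma(h_i)$ enter: given $g\notin H$ one must exhibit a character vanishing on $H$, meeting all block constraints, yet with $\chi(g)\ne 0$. Granting this, $\mathbf{n}(G,\tau_{(a_n)})=H$; and since the finest topology of a $T$-sequence is complete, $H\in\mathcal{NRC}(G)$, which is the ``complete'' refinement in the countable case. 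For uncountable $H$ I would replace the single sequence by the transfinite ($T$-filter) version of the same charging construction, run over a decomposition of $H$ into cyclic (hence countable) summands and a correspondingly indexed independent family of order-$m$ elements of $G$; this yields a Hausdorff, not necessarily complete, topology with $\mathbf{n}=H$, which matches the statement and explains why completeness is asserted only when $H$ is countable.
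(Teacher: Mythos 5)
Your necessity argument is correct and is essentially the paper's own (which follows Remus): both reduce to a prime $p$ for which $G_p$ has only finitely many independent cyclic summands of order at least $p^{a}$, multiply by an integer $N$ chosen so that $NG$ is finite while $Nh\neq 0$ for an element $h\in H$ of maximal order, and conclude that some continuous character survives on $H$. The paper packages this via the open subgroup $\mathrm{Ker}(g\mapsto Ng)$ and Lemma \ref{l1}; you lift a character from the finite (hence discrete) subgroup $NG$. Either form is fine.

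The sufficiency half, however, has a genuine gap at the step where you split the reservoir $R\cong\mathbb{Z}(m)^{(\omega)}$ into blocks that are ``(after an algebraic adjustment) independent of $H$.'' No such adjustment exists in general. Take $G=\mathbb{Z}(p^{2})^{(\omega)}$ and $H=pG=G[p]$. Then $\exp H=p$ and $G$ contains $\mathbb{Z}(p)^{(\omega)}$, so the theorem applies; but every element of order $p$ in $G$ lies in $G[p]=H$, so \emph{every} subgroup of $G$ isomorphic to $\mathbb{Z}(\exp H)^{(\omega)}$ is contained in $H$ itself. A reservoir of exponent $\exp H$ cannot even meet $H$ trivially, let alone be independent of it, so your charging construction cannot start. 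This interaction between $H$ and any candidate independent family is exactly what the algebraic half of the paper is built to control: Lemma \ref{ld02}, Proposition \ref{pd03} and Theorem \ref{td02} produce, after passing to a subgroup $G_0$ that is then declared open (the Reduction Principle), a decomposition $G_0=X\oplus\bigoplus_{i}(H_i+\langle e_i\rangle)$ in which the finite pieces $H_i$ of $H$ and the independent elements $e_i$ of order $\exp H$ are interlocked rather than disjoint, and the hard case ($\langle A\rangle\cap H$ infinite) needs a nontrivial induction on maximal orders. Beyond that, your ``charging lemma'' and the check that the interleaved sequence kills $\chi|_H$ \emph{and nothing more} are precisely the content of Propositions \ref{l5} and \ref{p3}: the explicit sequence with $d_{2n+1}=b_{n(\mathrm{mod}\,\mu_n)}+e_{S_{n-1}+1}+\cdots+e_{S_n}$, the verification of the Protasov--Zelenyuk criterion, and the computation $s_{\mathbf{d}}\left((G_d)^{\wedge}\right)^{\perp}=H$ via Theorem D. You correctly identify the reverse inclusion $\mathbf{n}\subseteq H$ as the main obstacle, but you do not supply it. Finally, for uncountable $H$ the paper does not use a transfinite $T$-filter: it splits off an uncountable summand $X$ satisfying condition $(\Lambda)$, gives $X$ a MinAP topology (Corollary \ref{co1}, Proposition \ref{p4}), and invokes the fact that the von Neumann radical of a product is the product of the radicals.
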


\begin{theorem} \label{t3}
Let $G$ be an unbounded Abelian group. Then each bounded subgroup $H$ of $G$ belongs to $\mathcal{NR} (G)$. Moreover, if $H$ is countable, then $H\in \mathcal{NRC} (G)$.
\end{theorem}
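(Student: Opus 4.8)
The plan is to realise $H$ as a von Neumann radical by the method of $T$-sequences. Recall that if $\mathbf{u}=(u_n)$ is a $T$-sequence in an Abelian group $A$ and $\tau_{\mathbf u}$ is the finest group topology on $A$ in which $u_n\to 0$, then $\tau_{\mathbf u}$ is a \emph{complete} Hausdorff group topology and its dual is computed by the sequence itself: $(A,\tau_{\mathbf u})^\wedge=s_{\mathbf u}(A):=\{\chi\in\mathrm{Hom}(A,\mathbb{T}):\chi(u_n)\to 1\}$, whence $\mathbf{n}(A,\tau_{\mathbf u})=\bigcap_{\chi\in s_{\mathbf u}(A)}\ker\chi=(s_{\mathbf u}(A))_{\perp}$. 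Regarding $\widehat{A}=\mathrm{Hom}(A,\mathbb{T})$ as the compact dual of the discrete group $A$ and using that every subgroup of a discrete group is closed, Pontryagin duality turns the required identity $(s_{\mathbf u}(A))_{\perp}=H$ into $\overline{s_{\mathbf u}(A)}=H^{\perp}$; that is, $s_{\mathbf u}(A)$ must be a \emph{dense subgroup} of $H^{\perp}\cong\widehat{A/H}$. Thus, in the countable case it suffices to produce a $T$-sequence $\mathbf u$ in $G$ with $\overline{s_{\mathbf u}(G)}=H^{\perp}$; completeness of $\tau_{\mathbf u}$ then upgrades the conclusion to $H\in\mathcal{NRC}(G)$ for free.

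First I would fix, using that $G$ is unbounded and $H$ is countable, a countable unbounded subgroup $L$ with $H\le L\le G$, and build $\mathbf u$ with all terms in $L$. Since $\chi(u_n)$ depends only on $\chi|_L$, the restriction map $r\colon\widehat{G}\to\widehat{L}$ satisfies $s_{\mathbf u}(G)=r^{-1}(s_{\mathbf u}(L))$; as $H\le L$ gives $r^{-1}(H^{\perp_L})=H^{\perp}$ and $r$ is a continuous open surjection, density transfers and the goal reduces to the intrinsic condition $\overline{s_{\mathbf u}(L)}=H^{\perp_L}$ in the compact metrizable group $\widehat{L}$. This splits into two demands: (i) $s_{\mathbf u}(L)\subseteq H^{\perp_L}$, i.e.\ every $\chi$ with $\chi(u_n)\to1$ kills $H$ (equivalently $H\subseteq\mathbf n$); and (ii) $s_{\mathbf u}(L)$ is dense in $H^{\perp_L}$ (equivalently $\mathbf n\subseteq H$, i.e.\ the continuous characters separate $L/H$). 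The richness needed for (ii) is furnished by the structure of the unbounded group $L$, which contains one of $\mathbb{Z}$, a Pr\"{u}fer group $\mathbb{Z}(p^\infty)$, or a sum $\bigoplus_i\mathbb{Z}(p_i^{k_i})$ with $k_i\to\infty$; a characterizing sequence on this part produces a dense family of characters of $L/H$. For (i) one interleaves additional terms that diagonalize over a fixed countable dense subset of $H^{\perp_L}$ so as to destroy every character that is nontrivial on $H$.

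The main obstacle is carrying out (i) and (ii) \emph{simultaneously}. One cannot simply split $G$ as a direct sum, put a MinAP topology on $H$ and a MAP topology on a complement, and take products: a finite or otherwise ``thin'' bounded $H$ admits no MinAP topology at all (cf.\ Theorem \ref{t2}), so $H$ must instead be \emph{entangled} with the unbounded part of $G$. Concretely, the terms added to force $\chi|_H=1$ will share their unbounded coordinates with the terms making $L/H$ well separated, and the delicate point is to prevent a character $\chi$ with $\chi|_H\ne1$ from re-entering $s_{\mathbf u}(L)$ through an accidental cancellation between its $H$-part and its unbounded part, all the while keeping the characters of $L/H$ dense. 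This is where the bulk of the combinatorial work lies, and it is exactly the phenomenon that makes the unboundedness of $G$ indispensable. The remaining side-condition, that $\mathbf u$ is genuinely a $T$-sequence in $G$, I would secure by arranging $u_n\to0$ in a fixed auxiliary metrizable Hausdorff group topology on $G$ (for instance a linear one available on any unbounded group), which certifies the $T$-sequence property directly.

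Finally, for an \emph{uncountable} bounded $H$ only $\mathcal{NR}(G)$ is asserted, and here I would replace the $T$-sequence by the corresponding $T$-filter: the finest group topology in which a suitably built $T$-filter converges to $0$ is again a Hausdorff group topology admitting an analogous description of its dual, and the construction parallels the sequential one but is indexed so as to annihilate the (now possibly uncountable) set of characters nontrivial on $H$ while keeping those of $L/H$ dense. Since $T$-filter topologies need not be complete, this yields $H\in\mathcal{NR}(G)$ but not, in general, $H\in\mathcal{NRC}(G)$, in agreement with the statement.
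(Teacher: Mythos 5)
Your duality reformulation (via Theorem D, $\mathbf{n}(G,\mathbf{u})=s_{\mathbf u}\left((G_d)^\wedge\right)^\perp$, so that the goal becomes $\overline{s_{\mathbf u}}=H^\perp$) and the reduction to a countable subgroup $L\supseteq H$ made open are both sound and consistent with what the paper does. The difficulty is that the entire content of the theorem lies in the step you defer: you correctly observe that one cannot simply split off $H$ and give it a MinAP topology (a bounded ``thin'' $H$ admits none, cf.\ Theorem \ref{t2}), so $H$ must be entangled with the unbounded part, and you then announce that ``the bulk of the combinatorial work lies'' in preventing accidental cancellations --- but you never produce the sequence. That construction is exactly what the paper supplies: the purely algebraic Theorem \ref{td01} first replaces $L$ by a subgroup in one of three normal forms ($\mathbb{Z}\oplus H_0$ plus a $(\Lambda)$-summand, $\mathbb{Z}(p^\infty)+H_0$ plus a $(\Lambda)$-summand, or $\bigoplus_{i}(H_i+\langle e_i\rangle)$ with $o(e_i)$ strictly increasing), the finitely generated cases are disposed of by Theorem B, and Proposition \ref{l5} writes down an explicit interleaved $T$-sequence $d_{2n}=\lambda e_{r}$, $d_{2n+1}=b_{n({\rm mod}\,\mu_n)}+e_{S_{n-1}+1}+\dots+e_{S_n}$ whose odd terms attach a basis element of $H$ to a growing block of independent $e_j$'s; verifying the $T$-sequence property and computing $s_{\mathbf d}$ (Proposition \ref{p3}) is the actual proof. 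Without a concrete candidate sequence, your outline restates what must be proved rather than proving it.

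Two further concrete problems. First, your device for certifying the $T$-sequence property --- arranging $u_n\to 0$ in an auxiliary metrizable linear Hausdorff topology --- fails in general: $\mathbb{Z}(p^\infty)$ admits no non-discrete Hausdorff linear topology (every nonzero subgroup contains the socle), and in any case the terms $u_n$ are not free to be placed inside a prescribed null filter once they must also satisfy your conditions (i) and (ii); the paper instead checks the $T$-sequence property combinatorially via the Protasov--Zelenyuk criterion (Theorem C). Second, for uncountable bounded $H$ your appeal to $T$-filters is unsupported --- no analogue of Theorem D for $T$-filter topologies is stated or available here --- whereas the paper sidesteps the issue algebraically: Theorem \ref{td01} splits $H=X\oplus\bigoplus_i H_i$ with $X$ satisfying condition $(\Lambda)$, so $X$ admits a MinAP group topology by Corollary \ref{co1} and Proposition \ref{p4}, the countable part is handled by the $T$-sequence, and the von Neumann radical of the product is the product of the radicals. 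You should either carry out your density/entanglement construction explicitly or adopt the decomposition route.
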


\begin{theorem} \label{t1}
Let $G$ be an unbounded Abelian group. For any countable subgroup $H$ of $G$ there exists a complete Hausdorff group topology such that $H=\mathbf{n} (G)$, i.e.,  $H\in \mathcal{NRC} (G)$. In particular, if $G$ is countably infinite, then $\mathcal{NRC}(G) = \mathcal{S}(G)$.
\end{theorem}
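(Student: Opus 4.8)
The plan is to separate a soft ``extension'' step from the hard analytic construction, and to reduce the whole statement to the case $H=G$ (the minimally almost periodic case), which is the genuine content of Comfort's problem.

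First I would prove an \emph{open-subgroup extension lemma}: if a subgroup $A\le G$ carries a complete Hausdorff group topology $\tau_0$ with $\mathbf{n}(A,\tau_0)=H$, then declaring the $\tau_0$-neighbourhoods of $0$ to be a neighbourhood base at $0$ in $G$ yields a complete Hausdorff group topology $\tau$ on $G$ in which $A$ is open and $\mathbf{n}(G,\tau)=H$. The three assertions are checked as follows. Since $A$ is open, $G/A$ is discrete, so the characters of $G/A$ separate its points and lift (through the continuous quotient map) to continuous characters of $(G,\tau)$; hence no point outside $A$ lies in the radical, giving $\mathbf{n}(G,\tau)\subseteq A$. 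Conversely, because $\mathbb{T}$ is divisible and $A$ is a neighbourhood of $0$, every continuous character of $(A,\tau_0)$ extends (algebraically by injectivity, then continuously since $A$ is a neighbourhood of $0$) to a continuous character of $(G,\tau)$; thus the continuous characters of $G$ restrict \emph{onto} $(A,\tau_0)^\wedge$, whence $\mathbf{n}(G,\tau)\cap A=\mathbf{n}(A,\tau_0)=H$. Combining the two inclusions gives $\mathbf{n}(G,\tau)=H$. Completeness follows because the cosets of the open subgroup $A$ are clopen, so every Cauchy filter is eventually trapped in a single coset and there converges by completeness of $A$.

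This lemma does the bookkeeping in two ways. On one hand it reduces the uncountable case to the countable one: given unbounded $G$ and countable $H$, I would choose a countable unbounded subgroup $A$ with $H\le A\le G$ (possible since $\exp G=\infty$) and apply the lemma once a suitable $\tau_0$ on $A$ is produced. On the other hand it lets me split the countable case according to the structure of $H$. If $H$ is bounded, then $H\in\mathcal{NRC}(A)$ already by Theorem \ref{t3}, and the lemma promotes this to $G$. If $H$ is unbounded, then $H$ is itself a countably infinite unbounded group, and it suffices to put a complete Hausdorff \emph{minimally almost periodic} topology $\tau_H$ on $H$ and apply the lemma with $A=H$: then $\mathbf{n}(G,\tau)\cap H=\mathbf{n}(H,\tau_H)=H$ together with $\mathbf{n}(G,\tau)\subseteq H$ force $\mathbf{n}(G,\tau)=H$.

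Everything is thereby reduced to the core statement, which is exactly Comfort's problem: \emph{every countably infinite unbounded Abelian group $G$ carries a complete Hausdorff MinAP topology.} Here I would invoke the method of $T$-sequences, constructing a $T$-sequence $\mathbf{u}=(u_n)$ in $G$ whose Protasov--Zelenyuk topology $\tau_{\mathbf{u}}$ (automatically complete and Hausdorff) admits no nontrivial continuous character, i.e.\ the only $\chi\in\mathrm{Hom}(G,\mathbb{T})$ with $\chi(u_n)\to 0$ is $\chi=0$. Since $G$ is countable, $\mathrm{Hom}(G,\mathbb{T})$ is compact and metrizable, and the plan is to build $\mathbf{u}$ by recursion, at each stage appending a block of terms --- formed from elements of unboundedly large order, which exist precisely because $\exp G=\infty$ --- that expels a prescribed nonzero portion of $\mathrm{Hom}(G,\mathbb{T})$ from the surviving dual, while simultaneously verifying the Protasov--Zelenyuk criterion that keeps $\mathbf{u}$ a $T$-sequence. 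This construction, which generalizes the Ajtai--Havas--Koml\'os argument for $\mathbb{Z}$, $\mathbb{Z}(p^\infty)$ and $\bigoplus\mathbb{Z}(p_i)$ to an arbitrary countable unbounded group, is the main obstacle; the extension lemma and the bounded case (Theorem \ref{t3}) are then purely formal. Finally, the ``in particular'' assertion is immediate, since for countably infinite unbounded $G$ every subgroup is countable, so the first part yields $\mathcal{NRC}(G)=\mathcal{S}(G)$.
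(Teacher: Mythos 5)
Your reduction machinery is sound and is essentially the paper's own ``Reduction Principle'': your open-subgroup extension lemma is Lemma \ref{l1} combined with Noble's observation that open subgroups are dually closed and dually embedded, and the completeness argument via clopen cosets is standard. The case split ($H$ bounded handled by Theorem \ref{t3}; $H$ unbounded handled by putting a complete MinAP topology on $H$ itself and declaring $H$ open in $G$) is exactly how the paper proceeds, so this part of your proposal is correct and not genuinely different.

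The genuine gap is the core statement you isolate and then leave unproved: that every countably infinite unbounded Abelian group admits a complete Hausdorff MinAP group topology. Your plan --- a recursive construction of a $T$-sequence that at each stage ``expels a prescribed nonzero portion of $\mathrm{Hom}(G,\mathbb{T})$'' while preserving the Protasov--Zelenyuk criterion --- is precisely the point at which all earlier attempts stalled: $\mathrm{Hom}(G,\mathbb{T})$ is an uncountable compact group, the characters to be expelled cannot be handled one at a time, and the tension between killing characters and keeping the sequence a $T$-sequence is the whole difficulty of Comfort's problem, not a routine elaboration of the Ajtai--Havas--Koml\'os argument. The paper resolves it by an entirely different route (Theorem \ref{t6}): embed $G$ densely into $X=\mathbb{T}^\omega$ (Theorem F of Dikranjan--Shakhmatov), apply the Dikranjan--Kunen characterization theorem (Theorem E) to the countable subgroup $\pi^\wedge(X^\wedge)$ of the compact metrizable group $(G_d)^\wedge$ to obtain \emph{in one stroke} a sequence $\mathbf{a}$ in $G$ with $s_{\mathbf{a}}\left((G_d)^\wedge\right)=\pi^\wedge(X^\wedge)$, then use the Haar-measure fact that topological generators of $X$ form a set of full measure to find $b$ with $\langle b\rangle\cap\pi(G)=\{0\}$, and interleave $\mathbf{a}$ with a sequence of elements of $G$ whose images converge to $b$; Corollary \ref{co3} shows the interleaved sequence is a $T$-sequence, and Theorem D then forces the von Neumann radical to be all of $G$. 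Without Theorem E (or an equivalent external input) your recursion is a restatement of the problem rather than a proof, so the proposal as written does not establish the theorem.
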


As an evident corollary of these theorems we obtain a complete characterization of countably infinite Abelian groups which admit a MinAP group topology, in particular, we give the positive answer to Comfort's Question 4 for the countable case and, hence, to Question 5.

\begin{cor} \label{c1}
{\it Each unbounded countably infinite Abelian group
 admits a complete Hausdorff minimally almost periodic group topology.}
\end{cor}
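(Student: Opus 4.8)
The plan is to read the corollary off directly from Theorem \ref{t1}, exploiting the fact that, by von Neumann's definition recalled in the introduction, a group topology is minimally almost periodic precisely when its von Neumann radical is the whole group. So let $G$ be an unbounded countably infinite Abelian group. First I would observe the one thing that has to be checked, namely that $G$ is itself a countable subgroup of $G$; hence $G$ is a legitimate choice for the subgroup $H$ appearing in the hypothesis of Theorem \ref{t1}, and the unboundedness hypothesis there is exactly the one we are given.

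Applying Theorem \ref{t1} with $H = G$ then produces a complete Hausdorff group topology $\tau$ on $G$ for which $\mathbf{n}(G,\tau) = G$. Equivalently, one may invoke the ``in particular'' clause of Theorem \ref{t1}: since $G$ is countably infinite we have $\mathcal{NRC}(G) = \mathcal{S}(G)$, and as $G \in \mathcal{S}(G)$ trivially we obtain $G \in \mathcal{NRC}(G)$. By the very definition of $\mathcal{NRC}(G)$, this membership means that there is a complete (automatically non-discrete) Hausdorff group topology $\tau$ on $G$ with $\mathbf{n}(G,\tau) = G$. Now $\mathbf{n}(G,\tau) = \cap_{\chi \in (G,\tau)^\wedge} \ker \chi = G$ says exactly that every continuous character of $(G,\tau)$ is trivial, so $(G,\tau)$ is minimally almost periodic, which is what we wanted.

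There is essentially no obstacle remaining at this stage: all the genuine difficulty---the construction of a complete Hausdorff group topology realizing a prescribed countable subgroup as its von Neumann radical, presumably carried out via the $T$-sequence machinery flagged in the keywords---has already been discharged inside Theorem \ref{t1}. The only points I would double-check are bookkeeping: that the unbounded-plus-countable hypotheses on $G$ match those of Theorem \ref{t1} under the specialization $H = G$ (they do), and that ``minimally almost periodic'' is genuinely synonymous with $\mathbf{n} = G$ rather than with the weaker statement that characters fail to separate points (it is, by the definition from \cite{Neu} stated at the outset). Hence the corollary is immediate.
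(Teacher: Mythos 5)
Your proposal is correct and coincides with the paper's own treatment: the paper derives Corollary \ref{c1} as an immediate consequence of Theorem \ref{t1} by taking $H=G$, exactly as you do, with the only content being the observation that $\mathbf{n}(G,\tau)=G$ is the definition of minimal almost periodicity. No gaps.
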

Let $G$ be a bounded group. Then $G$ has the form $G =\bigoplus_{p\in M} \bigoplus_{i=1}^{n_p} \mathbb{Z}(p^i)^{(\kappa_{i,p})}$, where $M$ is a finite set of prime numbers. Leading Ulm-Kaplansky invariants of $G$ are the cardinal numbers $\{ \kappa_{n_p ,p} \}_{ p\in M}$. For bounded groups we have the following:
\begin{cor} \label{c2}
{\it An infinite bounded Abelian group $G$ admits a MinAP group topology if and only if all its leading Ulm-Kaplansky invariants are infinite. In such a case $\mathcal{NR}(G) = \mathcal{S}(G)$.}
\end{cor}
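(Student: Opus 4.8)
The plan is to deduce Corollary \ref{c2} directly from Theorem \ref{t2}, taking the subgroup $H$ to be the whole group $G$. Since $\mathbf{n}(G,\tau)=G$ means precisely that $(G,\tau)$ is MinAP, the assertion that $G$ admits a MinAP group topology is by definition that $G\in\mathcal{NR}(G)$. By Theorem \ref{t2} this holds if and only if $G$ contains a subgroup of the form $\mathbb{Z}(\exp G)^{(\omega)}$, so the whole corollary reduces to a purely algebraic translation of this containment into a condition on the leading Ulm-Kaplansky invariants.

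First I would compute $\exp G$ for $G=\bigoplus_{p\in M}\bigoplus_{i=1}^{n_p}\mathbb{Z}(p^i)^{(\kappa_{i,p})}$: the exponent is the least common multiple of the orders occurring, namely $\exp G=\prod_{p\in M}p^{n_p}$. By the Chinese Remainder Theorem $\mathbb{Z}(\exp G)\cong\bigoplus_{p\in M}\mathbb{Z}(p^{n_p})$, and since the summands for distinct primes sit in different primary components, $G$ contains $\mathbb{Z}(\exp G)^{(\omega)}$ if and only if it contains $\mathbb{Z}(p^{n_p})^{(\omega)}$ for every $p\in M$. The task is thus to show that $G$ contains $\mathbb{Z}(p^{n_p})^{(\omega)}$ exactly when $\kappa_{n_p,p}$ is infinite.

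The ``if'' direction is immediate: if $\kappa_{n_p,p}\geq\omega$ then the direct summand $\mathbb{Z}(p^{n_p})^{(\kappa_{n_p,p})}$ already contains $\mathbb{Z}(p^{n_p})^{(\omega)}$. The converse is the only place that needs a genuine argument, and I regard it as the main (though modest) obstacle. Here I would pass to the primary component $G_p$ and multiply by $p^{n_p-1}$: every summand $\mathbb{Z}(p^i)$ with $i<n_p$ is annihilated, so $p^{n_p-1}G_p\cong\mathbb{Z}(p)^{(\kappa_{n_p,p})}$, an $\mathbb{F}_p$-vector space of dimension exactly $\kappa_{n_p,p}$. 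Any copy $K\cong\mathbb{Z}(p^{n_p})^{(\omega)}$ inside $G$ is a $p$-group of exponent $p^{n_p}$, hence lies in $G_p$, and $p^{n_p-1}K\cong\mathbb{Z}(p)^{(\omega)}$ is a subspace of infinite dimension; therefore $\kappa_{n_p,p}\geq\omega$. Combining the two directions over all $p\in M$ gives the desired equivalence and proves the first assertion of the corollary.

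For the final clause I would assume that all leading invariants are infinite and verify $\mathcal{NR}(G)=\mathcal{S}(G)$. The trivial subgroup lies in $\mathcal{NR}(G)$ by Proposition \ref{p00}. For a nonzero subgroup $H$, write $\exp H=\prod_{p}p^{m_p}$ with $m_p\leq n_p$ (as $\exp H$ divides $\exp G$); for each $p$ with $m_p>0$ the cyclic group $\mathbb{Z}(p^{n_p})$ contains the subgroup $p^{n_p-m_p}\mathbb{Z}(p^{n_p})\cong\mathbb{Z}(p^{m_p})$, so from $G\supseteq\mathbb{Z}(p^{n_p})^{(\omega)}$ I obtain $G\supseteq\mathbb{Z}(p^{m_p})^{(\omega)}$. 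Taking the (independent) sum over the relevant primes yields $G\supseteq\mathbb{Z}(\exp H)^{(\omega)}$, whence $H\in\mathcal{NR}(G)$ by Theorem \ref{t2}. Thus every subgroup of $G$ belongs to $\mathcal{NR}(G)$, completing the proof.
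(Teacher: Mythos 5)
Your proposal is correct and follows essentially the same route as the paper, which presents Corollary \ref{c2} as an immediate consequence of Theorem \ref{t2} applied with $H=G$ (and, for the last clause, with $H$ an arbitrary subgroup). You merely make explicit the routine translation between ``$G$ contains $\mathbb{Z}(\exp G)^{(\omega)}$'' and ``all leading Ulm--Kaplansky invariants are infinite,'' which is exactly the step the paper leaves to the reader.
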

Moreover, we have the following corollary of Theorems \ref{t2}-\ref{t1}:
\begin{cor} \label{cc2}
{\it Let  $H$ be a bounded [respectively  countably infinite] subgroup of an Abelian group $G$. If $H$ belongs to $\mathcal{NR}(G)$, then $\mathcal{S}(H) \subset \mathcal{NR}(G)$ [respectively $\mathcal{S}(H) \subset \mathcal{NRC}(G)$]. }
\end{cor}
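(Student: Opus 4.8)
The plan is to fix an arbitrary subgroup $K\in\mathcal{S}(H)$ and show that $K$ lies in the required set, disposing first of the degenerate case $K=\{0\}$, which is handled directly by Proposition \ref{p00} (it places $\{0\}$ in $\mathcal{NRC}(G)\subseteq\mathcal{NR}(G)$). For nonzero $K$ I would split the argument according to whether $G$ is unbounded or bounded, since the three main theorems divide along exactly this line. The observations to keep in mind throughout are that a subgroup of a bounded group is bounded and a subgroup of a countable group is countable, so $K$ inherits from $H$ whatever boundedness or countability is being assumed.

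Suppose first that $G$ is unbounded. If $H$ is bounded, then $K\subseteq H$ is bounded, and Theorem \ref{t3} immediately gives $K\in\mathcal{NR}(G)$, settling the statement $\mathcal{S}(H)\subset\mathcal{NR}(G)$. If instead $H$ is countably infinite, then $K\subseteq H$ is countable, and Theorem \ref{t1} yields $K\in\mathcal{NRC}(G)$, giving $\mathcal{S}(H)\subset\mathcal{NRC}(G)$. In this branch the hypothesis $H\in\mathcal{NR}(G)$ is not actually used, since these two theorems already control all bounded, respectively countable, subgroups of an unbounded $G$.

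Suppose now that $G$ is bounded; then $G$ is infinite bounded and $H$ is a nonzero bounded subgroup, so only the bounded clause (and its countable refinement) can occur. Here the hypothesis $H\in\mathcal{NR}(G)$ does the real work: by Theorem \ref{t2} it is equivalent to $G$ containing a copy of $\mathbb{Z}(\exp H)^{(\omega)}$. Since $K\subseteq H$, every element of $K$ is annihilated by $\exp H$, whence $\exp K$ divides $\exp H$; consequently the cyclic group $\mathbb{Z}(\exp H)$ contains a subgroup isomorphic to $\mathbb{Z}(\exp K)$, and passing to the $\omega$-fold direct sum shows $\mathbb{Z}(\exp H)^{(\omega)}\supseteq\mathbb{Z}(\exp K)^{(\omega)}$. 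Thus $G$ contains a copy of $\mathbb{Z}(\exp K)^{(\omega)}$, and Theorem \ref{t2} applied to the nonzero subgroup $K$ gives $K\in\mathcal{NR}(G)$. If moreover $H$, and hence $K$, is countable, the ``particular case'' clause of Theorem \ref{t2} lets the witnessing topology be chosen complete, so in fact $K\in\mathcal{NRC}(G)$, as needed for the countably infinite statement.

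The only non-formal step is this last one: the fact that the structural condition ``$G$ contains $\mathbb{Z}(\exp(\,\cdot\,))^{(\omega)}$'' is monotone under passage to subgroups, via the divisibility $\exp K\mid\exp H$. Everything else is bookkeeping across the bounded/unbounded and countable/uncountable dichotomies, and I do not anticipate a serious obstacle, since Theorems \ref{t2}, \ref{t3} and \ref{t1} were designed to cover precisely the combinations that arise. The main care required is to verify, before invoking each theorem, that $K$ satisfies its exact hypotheses: that $G$ is infinite bounded and $K$ nonzero for Theorem \ref{t2}, and that $K$ is bounded or countable, respectively, for Theorems \ref{t3} and \ref{t1}.
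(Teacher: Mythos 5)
Your proof is correct and follows exactly the route the paper intends: Corollary \ref{cc2} is stated there as an immediate consequence of Theorems \ref{t2}--\ref{t1}, and your case analysis (unbounded $G$ via Theorems \ref{t3} and \ref{t1}, bounded $G$ via both directions of Theorem \ref{t2} together with the divisibility $\exp K \mid \exp H$ and the resulting inclusion $\mathbb{Z}(\exp K)^{(\omega)} \subseteq \mathbb{Z}(\exp H)^{(\omega)}$, plus Proposition \ref{p00} for $K=\{0\}$) supplies precisely the bookkeeping the author leaves implicit. No gaps.
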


For the unbounded uncountable case Comfort's Problem as well as the uncountable counterpart of Question 5 remain open. We can prove only the following:
\begin{theorem} \label{t4}
Let $G= \bigoplus_{\alpha \in I} G_\alpha$, where $I$ is an arbitrary set of indices and  $G_\alpha$  are (nonzero) countable Abelian groups. If $G$ is unbounded, then it admits a MinAP group topology.
\end{theorem}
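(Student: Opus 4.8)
The plan is to reduce the uncountable problem to the countable results already in hand (Corollaries \ref{c1} and \ref{c2}) by a direct-sum construction. The key tool is the following gluing principle: \emph{if $G$ is an internal direct sum $\bigoplus_{j\in J}H_j$ and each $H_j$ carries a Hausdorff MinAP topology $\tau_j$, then $G$ admits a Hausdorff MinAP topology.} To see this I would equip $G=\bigoplus_{j\in J}H_j$ with the topology $\tau$ whose base of neighborhoods of $0$ consists of the sets $W(\{U_j\})=\{(x_j)\in\bigoplus_{j}H_j : x_j\in U_j \text{ for all } j\}$, where each $U_j$ is a $\tau_j$-neighborhood of $0$; this is the restriction to the direct sum of the box topology on $\prod_j H_j$. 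One checks routinely that $\tau$ is a group topology, that each inclusion $\iota_j\colon H_j\hookrightarrow G$ is a topological embedding, and that $\tau$ is Hausdorff as soon as all $\tau_j$ are: if $0\neq x=(x_j)$ then $x_{j_0}\neq 0$ for some $j_0$, and a basic neighborhood with $x_{j_0}\notin U_{j_0}$ and $U_j=H_j$ otherwise excludes $x$.

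Granting this, the MinAP property is immediate. Let $\chi\in G^{\wedge}$. Since each $\iota_j$ is continuous, $\chi\circ\iota_j$ is a continuous character of $(H_j,\tau_j)$, hence $\chi\circ\iota_j=0$ because $H_j$ is MinAP. As every $g\in G$ is a finite sum $g=\sum_{j}\iota_j(g_j)$ and $\chi$ is a homomorphism, $\chi(g)=\sum_j(\chi\circ\iota_j)(g_j)=0$. Thus $G^{\wedge}=\{0\}$, i.e. $\mathbf{n}(G,\tau)=G$, so $(G,\tau)$ is MinAP. (Note $\tau$ is automatically non-discrete, since an infinite discrete Abelian group is MAP.) It therefore suffices to write $G$ as a direct sum of countably infinite groups each of which \emph{admits} a Hausdorff MinAP topology.

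The real work is this decomposition. I would first regroup the original summands according to boundedness, writing $G=U\oplus B$ with $U=\bigoplus\{G_\alpha : G_\alpha\text{ unbounded}\}$ and $B=\bigoplus\{G_\alpha : G_\alpha\text{ bounded}\}$. Each unbounded $G_\alpha$ is countably infinite, hence MinAP-admissible by Corollary \ref{c1}, so the summands of $U$ are already admissible blocks. For $B$ I would use that every bounded group is a direct sum of finite cyclic groups, so $B\cong\bigoplus_{p}\bigoplus_{i\ge 1}\mathbb{Z}(p^i)^{(\lambda_{i,p})}$, and these cyclic summands may be regrouped into blocks arbitrarily. The aim is to partition all cyclic summands into countable blocks each of which is either unbounded (admissible by Corollary \ref{c1}) or infinite bounded with all leading Ulm--Kaplansky invariants infinite (admissible by Corollary \ref{c2}). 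When the orders of the cyclic summands are sufficiently spread out one packs them into countable unbounded blocks; the remaining cyclic summands of bounded order are grouped so that, for each pair $(p,i)$ realizing a top exponent, the multiplicity of $\mathbb{Z}(p^i)$ inside each such block is infinite, which is arranged using that the relevant global multiplicities are infinite.

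The main obstacle — and essentially the only delicate point — is exactly this bookkeeping, compounded by the possibility that the unboundedness of $G$ is \emph{concentrated} in few summands of large order (for instance $\mathbb{Z}\oplus\mathbb{Z}(2)^{(\kappa)}$ with $\kappa$ uncountable). In that case only a bounded supply of unbounded blocks is available, so one must genuinely rely on the bounded MinAP blocks furnished by Corollary \ref{c2}; here one peels off the finitely many top cyclic summands of each bounded primary component whose exponent exceeds its infinitely-occurring levels, collects them into a ``defect'', and absorbs this defect into a countably infinite unbounded \emph{anchor} block, which always exists because $G$ is unbounded (take either an unbounded $G_\alpha$ from $U$, or, if $U=\varnothing$, countably many cyclic summands of $B$ of orders tending to infinity). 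What remains to verify is that these three kinds of blocks — unbounded, bounded-with-infinite-leading-invariants, and the single anchor absorbing the defect — together exhaust all cyclic summands and that the underlying cardinal arithmetic of covering a possibly uncountable index set by countable infinite blocks of the prescribed types is consistent; once the decomposition is in place, the gluing principle of the first two paragraphs finishes the proof.
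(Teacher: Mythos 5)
Your proposal is correct and follows essentially the same route as the paper: the gluing principle of your first two paragraphs is the paper's Proposition \ref{p4} (proved there via Kaplan's asterisk topology and his duality theorem, your box-topology argument being a correct self-contained substitute for the direct-sum case), and your block decomposition --- separating the $(p,n)$-levels of finite total multiplicity into a ``defect'' that is either already countable unbounded or gets absorbed into an unbounded anchor, while the infinite-multiplicity levels are handled by Corollary \ref{c2} --- is exactly the paper's Case~2 analysis with $C_1$, $C_2$ and the anchor $G_{i_0}$ (or one cyclic summand from each level of $C_2$ when no unbounded $G_\alpha$ exists). The bookkeeping you defer at the end is routine (an infinite set of pairs $(p,n)$ each occurring finitely often forces unbounded orders, and $\lambda=\omega\cdot\lambda$ for infinite $\lambda$), so there is no gap.
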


Since a divisible group is a direct sum of full rational groups $\mathbb{Q}$ and groups of the form $\mathbb{Z} (p^\infty)$  \cite[Theorem 19.1]{Fuc}, we obtain the following (see \cite{Rem}):
\begin{cor} \label{c3}
{\it {\rm 1. \cite{Rem}} Every free Abelian group admits a MinAP group topology.

{\rm 2. \cite{Rem}} Each divisible Abelian group admits a MinAP group topology.

{\rm 3. } Every linear space over $\mathbb{R}$ or $\mathbb{C}$ admits a MinAP group topology.
}
\end{cor}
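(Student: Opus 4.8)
The plan is to reduce all three parts to Theorem \ref{t4} by exhibiting the group in question as a direct sum $\bigoplus_{\alpha\in I}G_\alpha$ of nonzero \emph{countable} Abelian groups and then verifying that the resulting sum is unbounded; once both conditions hold, Theorem \ref{t4} immediately produces a MinAP group topology.

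First I would dispose of part 1. A nonzero free Abelian group has the form $\bigoplus_{\alpha\in I}\mathbb{Z}$ for a nonempty index set $I$. Each summand $\mathbb{Z}$ is a nonzero countable group, and the group contains a copy of the unbounded group $\mathbb{Z}$, so it is itself unbounded. Theorem \ref{t4} then applies verbatim.

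Next, for part 2, I would invoke the cited structure theorem \cite[Theorem 19.1]{Fuc}: every divisible Abelian group is a direct sum of copies of $\mathbb{Q}$ and of the quasicyclic groups $\mathbb{Z}(p^\infty)$. Since both $\mathbb{Q}$ and $\mathbb{Z}(p^\infty)$ are countable, this exhibits the group as a direct sum of nonzero countable summands, so only unboundedness remains to be checked. But $\mathbb{Q}$ is torsion free and $\mathbb{Z}(p^\infty)$ contains elements of order $p^n$ for every $n$; hence a nonzero divisible group is never of bounded exponent, and Theorem \ref{t4} yields the desired topology.

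Finally, for part 3, the one point that genuinely requires care—and the only real obstacle—is that one must \emph{not} take the field itself as the atomic summand, because $\mathbb{R}$ and $\mathbb{C}$ are uncountable, whereas Theorem \ref{t4} demands countable pieces. I would instead observe that a nonzero linear space $L$ over $\mathbb{R}$ or $\mathbb{C}$ is, regarded as an abstract Abelian group, divisible (for $v\in L$ and $n\in\mathbb{N}$ the vector $n^{-1}v$ solves $nw=v$) and torsion free, so part 2 applies at once. Equivalently, a basis of $L$ over the subfield $\mathbb{Q}$ presents $L$ as $\bigoplus_{\alpha\in J}\mathbb{Q}$, a direct sum of countable groups whose unboundedness is automatic from torsion freeness. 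Either way, the passage from the given field to $\mathbb{Q}$ is the crux, and after it the conclusion is immediate from Theorem \ref{t4}.
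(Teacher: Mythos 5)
Your proposal is correct and follows essentially the same route as the paper: the paper obtains all three parts from Theorem \ref{t4}, using the decomposition of a divisible group into copies of $\mathbb{Q}$ and $\mathbb{Z}(p^\infty)$ (and of a free group into copies of $\mathbb{Z}$), with part 3 reduced to part 2 since a vector space over $\mathbb{R}$ or $\mathbb{C}$ is divisible and torsion free. Your explicit checks of unboundedness and your replacement of the uncountable field by countable summands supply exactly the details the paper leaves implicit.
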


Since any Abelian group embeds into divisible one  \cite[A.15]{HR1}, we obtain:
\begin{cor} \label{c4}
{\it Each  Abelian group $H$ embeds into an Abelian group  $G$  that admits a
MinAP group  topology.}
\end{cor}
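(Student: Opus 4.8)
The plan is to obtain Corollary \ref{c4} as a one-line consequence of Corollary \ref{c3} together with the standard divisibility embedding. First I would invoke \cite[A.15]{HR1}, which guarantees that every Abelian group $H$ embeds into a divisible Abelian group $G$ (one may take $G$ to be the divisible hull of $H$). Having produced such a $G$, I would then apply part 2 of Corollary \ref{c3}, which asserts that every divisible Abelian group admits a MinAP group topology. Composing the two facts gives exactly the assertion: $H$ sits inside $G$, and $G$ carries the desired topology.

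There is no substantive obstacle here, since both ingredients are already available. The only point that deserves a moment's attention is the degenerate situation in which $H$ is finite (or trivial). In that case one must observe that the divisible hull of a finite group is still an infinite divisible group — for instance $\mathbb{Z}(n)$, with $n = \prod_i p_i^{a_i}$, embeds into $\bigoplus_i \mathbb{Z}(p_i^\infty)$ — so that the hypotheses of Corollary \ref{c3} are genuinely met and the resulting MinAP topology on $G$ is nondiscrete. Once this remark is recorded, the proof is complete.
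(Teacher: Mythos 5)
Your proposal is correct and coincides with the paper's own argument: the paper derives Corollary \ref{c4} in exactly this way, embedding $H$ into a divisible group via \cite[A.15]{HR1} and then applying part 2 of Corollary \ref{c3}. Your extra remark on the finite case is harmless but not needed, since Corollary \ref{c3}(2) is stated for all divisible groups.
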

Note that it is known the general result \cite{DW}: every Hausdorff topological  group can be embedded into a MinAP one as a closed subgroup.

Taking into consideration an algebraic structure of connected locally compact Abelian groups  \cite[Theorem 25.23]{HR1}, we obtain a particular answer to Dikranjan's question:
\begin{cor} \label{c5}
{\it If an Abelian group $G$ admits a connected locally compact group topology, then it admits also a MinAP group  topology. In particular, the group  $\mathbb{R}$  of the reals and
 the circle group  $\mathbb{T}$   admit a MinAP group topology.}
\end{cor}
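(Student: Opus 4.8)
The plan is to reduce everything to Corollary \ref{c3}, whose second part already guarantees a MinAP group topology on any divisible Abelian group. The only thing that needs verification is the purely algebraic fact that the abstract group underlying a connected locally compact Abelian group is divisible; the MinAP topology is then supplied by Corollary \ref{c3}(2), and it may have nothing whatsoever to do with the original locally compact topology on $G$.

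First I would invoke the structure theory of connected locally compact Abelian groups \cite[Theorem 25.23]{HR1}. By the principal structure theorem such a group $G$ is topologically isomorphic to $\mathbb{R}^n \times K$ with $K$ compact and connected, and this decomposition already exhibits the relevant algebraic structure: $\mathbb{R}^n$ is visibly divisible, and $K$ is divisible because, by Pontryagin duality, a compact Abelian group is divisible exactly when its discrete dual is torsion free, which is precisely the condition characterizing connectedness of $K$. Hence $G$ is divisible as an abstract group.

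Then I would apply Corollary \ref{c3}(2) to the divisible group $G$ to obtain a MinAP group topology, which completes the argument. Here the substance is carried by Theorem \ref{t4}, since a nonzero divisible group is an unbounded direct sum of copies of the countable groups $\mathbb{Q}$ and $\mathbb{Z}(p^\infty)$. For the two named examples one may also argue directly: $\mathbb{R}$ is a one-dimensional real linear space, covered by Corollary \ref{c3}(3), while $\mathbb{T} \cong \mathbb{R}/\mathbb{Z}$ is divisible, covered by Corollary \ref{c3}(2); of course each is in any case an immediate special case of the general statement, being connected and locally compact.

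I do not expect a genuine obstacle, as all the real content is already contained in Corollary \ref{c3}, and this corollary merely reduces to recognizing that connectedness together with local compactness forces algebraic divisibility. The one point deserving a word of care is conceptual rather than technical: the topology produced is a genuinely new group topology on the abstract group $G$, so there is no compatibility or continuity with respect to the original locally compact structure that would need to be checked.
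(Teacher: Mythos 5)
Your argument is correct and follows essentially the same route as the paper: the paper's proof consists precisely of citing the algebraic structure theorem for connected locally compact Abelian groups \cite[Theorem 25.23]{HR1} (which exhibits such a group as algebraically a direct sum of copies of $\mathbb{Q}$ and $\mathbb{Z}(p^\infty)$, hence divisible) and then invoking Corollary \ref{c3}(2), i.e.\ ultimately Theorem \ref{t4}. Your derivation of divisibility via the decomposition $\mathbb{R}^n \times K$ and the duality characterization of divisibility for compact connected groups is a harmless variant of the same step, and your remark that the resulting topology bears no relation to the original locally compact one matches the paper's intent.
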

In Remark \ref{r1} we show that  to give the positive answer to Question 7 we have to obtain the positive solutions to the special groups in this question.

\section{The scheme of the proofs.} \label{s1}

In what follows we need some notions and notations.

The order of an element $g\in G$ we denote by $o(g)$. The subgroup of $G$ generated by a subset $A$ is denoted by $\langle A\rangle$.  If $n>0$ we put $(n+1)A :=(n)A +A$.

Let $H$ be a subgroup of a topological Abelian group $X$. The annihilator of $H$  we denote by $H^{\perp} :=\{ \chi \in X^\wedge : (\chi, h)=1 \mbox{ for every } h\in H\}$. $H$ is called dually closed in $X$ if for every $x\in X\setminus H$ there exists a character $\chi\in H^{\perp}$ such that $(\chi,x)\not= 1$. $H$ is called dually embedded in $X$ if every character of $H$ can be extended to a character of $X$.

The following lemma plays an important role.
\begin{lemma} \label{l1}
{\rm \cite{Ga1}} {\it Let $K$ be a dually closed and dually embedded subgroup of a topological group $G$. Then $\mathbf{n} (K) =\mathbf{n} (G)$. }
\end{lemma}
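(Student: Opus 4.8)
The plan is to establish the two inclusions $\mathbf{n}(K)\subseteq\mathbf{n}(G)$ and $\mathbf{n}(G)\subseteq\mathbf{n}(K)$ separately, where throughout $K$ is understood to carry the subspace topology inherited from $G$. I expect the first inclusion to be essentially formal and to hold for every subgroup, whereas the reverse inclusion is where the two hypotheses on $K$ genuinely enter, each in a distinct role.

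For the inclusion $\mathbf{n}(K)\subseteq\mathbf{n}(G)$ I would argue as follows. Take $x\in\mathbf{n}(K)$ and let $\chi\in G^\wedge$ be arbitrary. The restriction $\chi|_K$ is a continuous character of $K$, so $(\chi,x)=(\chi|_K,x)=1$ because $x$ lies in the kernel of every element of $K^\wedge$. As $\chi$ was arbitrary, $x\in\mathbf{n}(G)$. Note that this direction uses neither the dual closedness nor the dual embeddedness of $K$; it is merely the remark that restriction of characters maps $G^\wedge$ into $K^\wedge$.

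The reverse inclusion $\mathbf{n}(G)\subseteq\mathbf{n}(K)$ I would carry out in two moves. First, dual closedness gives $\mathbf{n}(G)\subseteq K$: were some $x\in\mathbf{n}(G)$ to lie outside $K$, dual closedness would furnish a character $\chi\in K^{\perp}\subseteq G^\wedge$ with $(\chi,x)\neq 1$, contradicting $x\in\mathbf{n}(G)$. Having placed $\mathbf{n}(G)$ inside $K$, I would then fix $x\in\mathbf{n}(G)$ and an arbitrary $\psi\in K^\wedge$; dual embeddedness extends $\psi$ to some $\tilde\psi\in G^\wedge$, whence $(\psi,x)=(\tilde\psi,x)=1$, the last equality because $x\in\mathbf{n}(G)$ and $\tilde\psi$ agrees with $\psi$ on $K$. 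Thus $x$ lies in $\ker\psi$ for every $\psi\in K^\wedge$, i.e.\ $x\in\mathbf{n}(K)$.

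The only point meriting attention is the clean separation of the roles played by the two hypotheses, and this is also where I would locate whatever ``obstacle'' the statement contains: dual closedness is exactly what forces $\mathbf{n}(G)$ to sit inside $K$, so that restricting characters to $K$ loses no information about points of $\mathbf{n}(G)$, while dual embeddedness is exactly what guarantees that the characters of $K$ do not annihilate more points than those of $G$ do. There is no analytic difficulty; the entire content is the bookkeeping that matches each hypothesis to the inclusion it supplies, together with the (automatic) observation that $\mathbf{n}(K)\subseteq K$ so that membership in $\mathbf{n}(K)$ and in $\mathbf{n}(G)$ can be compared pointwise inside $K$.
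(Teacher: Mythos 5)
Your proof is correct and is the standard argument for this fact: the paper itself gives no proof, merely citing \cite{Ga1}, and the argument there proceeds exactly as you describe (restriction of characters gives $\mathbf{n}(K)\subseteq\mathbf{n}(G)$ unconditionally, dual closedness forces $\mathbf{n}(G)\subseteq K$, and dual embeddedness then yields $\mathbf{n}(G)\subseteq\mathbf{n}(K)$). Your separation of the roles of the two hypotheses is accurate and complete.
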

This lemma allows us to reduce the general case to  some particular and simpler ones.

{\bf 1. Reduction Principle}. Let $H$ be a subgroup of an infinite Abelian group $G$ and let $Y$ be an arbitrary subgroup of $G$ containing $H$. If $Y$ admits a Hausdorff group topology $\tau$ such that $\mathbf{n} (Y,\tau) =H$, then we can consider the topology on $G$ in which $Y$ is open. Since any open subgroup is dually closed and dually embedded  \cite[Lemma 3.3]{Nob}, by Lemma \ref{l1}, we obtain that $\mathbf{n} (G)=H$ either. Thus we can reduce our consideration to $Y$ only.

By this reduction principle, we can split the proofs of Theorems \ref{t2}-\ref{t1} into some special cases using two structural theorems (see Section \ref{s2}) and  the following proposition:
\begin{pro} \label{p4}
{\it Let $G= \left(\bigoplus_{\alpha \in I} G_\alpha \right) \times  \prod_{\beta \in J} H_\beta$, where $I$ and $J$ be  arbitrary sets of indices and each group
$G_\alpha$ and $H_\beta$ admit a MinAP group topology. Then $G$ also admits a MinAP group topology.}
\end{pro}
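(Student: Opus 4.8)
The plan is to put on each factor its MinAP topology, to embed $G$ as a topological subgroup of the full Cartesian product of the factors, and then to check by hand that the only continuous character of $G$ is the trivial one. Concretely, I would fix for each $\alpha\in I$ a MinAP group topology on $G_\alpha$ and for each $\beta\in J$ a MinAP group topology on $H_\beta$, and form the product group $P:=\prod_{\alpha\in I}G_\alpha\times\prod_{\beta\in J}H_\beta$ with the Tychonoff product topology; being a product of Hausdorff groups, $P$ is a Hausdorff topological group. Since $\bigoplus_{\alpha}G_\alpha$ is a subgroup of $\prod_{\alpha}G_\alpha$, the group $G=\left(\bigoplus_{\alpha}G_\alpha\right)\times\prod_{\beta}H_\beta$ is a subgroup of $P$, and I equip $G$ with the induced (subspace) topology, which is a Hausdorff group topology. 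Each canonical inclusion $G_\alpha\hookrightarrow G$ and the inclusion $\prod_{\beta}H_\beta\hookrightarrow G$ are topological embeddings, because the corresponding coordinate inclusions into $P$ are embeddings and $G$ carries the subspace topology from $P$. In particular, the restriction of any continuous character of $G$ to any $G_\alpha$ or to $\prod_{\beta}H_\beta$ is again continuous.

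First I would treat the product part. Let $\psi$ be a continuous character of $\prod_{\beta}H_\beta$. Choosing a neighborhood $V$ of $1$ in the circle group that contains no nontrivial subgroup, continuity gives a basic neighborhood $\prod_{\beta\in F}U_\beta\times\prod_{\beta\notin F}H_\beta$, with $F\subseteq J$ finite, that $\psi$ maps into $V$. Hence $\psi$ sends the subgroup $\prod_{\beta\notin F}H_\beta$ into $V$; but its image is a subgroup of the circle lying inside $V$, so it is trivial, and therefore $\psi$ vanishes on $\prod_{\beta\notin F}H_\beta$ and factors through $\prod_{\beta\in F}H_\beta$. Since each restriction $\psi|_{H_\beta}$ with $\beta\in F$ is a continuous character of the MinAP group $H_\beta$, it is trivial, whence $\psi$ itself is trivial. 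This shows that $\prod_{\beta}H_\beta$, with the product topology, is MinAP. This factoring-through-finitely-many-coordinates step, which rests on the no-small-subgroups property of the circle, is the technical heart of the argument; the rest is bookkeeping.

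It remains to assemble the pieces. Let $\chi$ be an arbitrary continuous character of $G$, and write a typical element of $G$ as $(g,h)$ with $g=(g_\alpha)\in\bigoplus_{\alpha}G_\alpha$ of finite support $S\subseteq I$ and $h\in\prod_{\beta}H_\beta$. Then $(g,h)=\sum_{\alpha\in S}(g_\alpha,0)+(0,h)$, so that $\left(\chi,(g,h)\right)=\prod_{\alpha\in S}\left(\chi|_{G_\alpha},g_\alpha\right)\cdot\left(\chi|_{\prod_{\beta}H_\beta},h\right)$. Each $\chi|_{G_\alpha}$ is a continuous character of the MinAP group $G_\alpha$, hence trivial, while $\chi|_{\prod_{\beta}H_\beta}$ is trivial by the previous paragraph; therefore $\left(\chi,(g,h)\right)=1$. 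As $(g,h)$ was arbitrary, $\chi$ is the trivial character, so $\mathbf{n}(G)=G$ and the constructed topology is MinAP. Note that the direct-sum part needs no ``no-small-subgroups'' reasoning, since each element there has finite support; and since the proposition claims only the existence of a MinAP topology (no completeness), the subspace topology on $G$ suffices.
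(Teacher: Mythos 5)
Your proof is correct, but it follows a genuinely different route from the paper's. The paper's argument is a two-line appeal to Kaplan's duality theorem: it equips $\bigoplus_{\alpha}G_\alpha$ with Kaplan's asterisk topology and $\prod_{\beta}H_\beta$ with the product topology, quotes the identification $G^\wedge = \prod_{\alpha}G_\alpha^\wedge \times \bigoplus_{\beta}H_\beta^\wedge$, and concludes that $G^\wedge$ is trivial because every factor's dual is. You instead give $G$ the subspace topology inherited from the full Tychonoff product $P$ and verify triviality of the dual by hand: the no-small-subgroups property of $\mathbb{T}$ forces any continuous character of $\prod_{\beta}H_\beta$ to factor through finitely many coordinates, and finite support handles the direct-sum part algebraically. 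Both are valid; your topology on the direct-sum factor is coarser than the asterisk topology (so it has, if anything, fewer continuous characters, which only helps for MinAP), and your argument is self-contained and elementary, at the cost of redoing a special case of what Kaplan's theorem packages. The paper's choice of the asterisk topology buys nothing extra for this particular proposition, which asks only for existence of some MinAP topology, so your coarser topology is a perfectly legitimate substitute. One small remark: you prove that the coordinate inclusions are topological embeddings, but all you actually use is their continuity, which is immediate from the definition of the product topology.
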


By the reduction principle and Proposition \ref{p4} and taking into account that the von Neumann radical of a  product of topological groups is the product of their von Neumann radicals, in the next section we show (see Theorems \ref{td01} and \ref{td02}) that to prove Theorems \ref{t2} and \ref{t3} we have to prove only the following assertions:
\begin{enumerate}
\item A group of the form $\mathbb{Z} (p^r)^{(\omega)}$ admits a complete MinAP group topology.
\item A group $G$ of the form $\mathbb{Z}\oplus H_0$, where $H_0$ is finite,  admits a complete Hausdorff group topology $\tau$ such that $\mathbf{n}(G,\tau)=H_0$.
\item A group $G$ of the form $\mathbb{Z}(p^\infty) +H_0$, where $H_0$ is finite,  admits a complete Hausdorff group topology $\tau$ such that $\mathbf{n}(G,\tau)=H_0$.
\item A group $G$ of the form  $\bigoplus_{i=0}^\infty (H_i + \langle e_i\rangle)$, where
\begin{enumerate}
\item the independent sequence $\{ e_i\}$ satisfies  either the condition
    \begin{enumerate}
    \item $\exp H = o(e_0)=o(e_1)=\dots $, or
    \item $\exp H \le o(e_0)<o(e_1)<\dots $;
    \end{enumerate}
\item there is $M\le \infty$ such that $H_j$ is a finite nonzero subgroup of $G$ for every $0\le j <M$, and, if $M<\infty$, $H_j = \{ 0\}$ for each $j\ge M$;
\end{enumerate}
admits a complete Hausdorff group topology $\tau$ such that $\mathbf{n}(G,\tau)=H$, where $H=\bigoplus_{i=0}^\infty H_i$.
\end{enumerate}
Assertion (1) is proved in \cite{ZP1} (see also Corollary \ref{co1}). Assertions (2), (3) and (4) for $M<\infty$ follow from Theorem B. In Proposition \ref{p3} we prove assertion (4) for the case $M=\infty$.

{\bf 2. Construction of a topology}. Following  Protasov and Zelenyuk \cite{ZP1, ZP2}, we say that a sequence $\mathbf{u} =\{ u_n \}$ in a group $G$ is a $T$-sequence if there is a Hausdorff group topology on $G$ with respect to which $u_n $ converges to zero. The group $G$ equipped with the finest group topology with this property is denoted by $(G, \mathbf{u})$. We note also that, by
\cite[Theorem 2.3.11]{ZP2}, the group $(G, \mathbf{u})$ is complete.

Following \cite{BDM}, we say that a sequence $\mathbf{u} =\{ u_n\}$ is a $TB$-sequence in a group $G$ if there is a precompact Hausdorff group topology on $G$ in which  $u_n \to 0$.

For a sequence $\{ d_n \}$ and $l,m\in \mathbb{N}$, one puts \cite{ZP1}:
\[
\begin{split}
A(k,m) = \left\{ \right.  &  n_1 d_{r_1} +\dots +n_s d_{r_s} | \; \mbox{where } \\
 &   m\le r_1 <\dots < r_s , n_i \in \mathbb{Z} \setminus \{ 0\} , \sum_{i=1}^s | n_i | \le k+1 \} \cup \{ 0\}.
\end{split}
\]
Then the Protasov-Zelenyuk criterion is formulated as follows:
\begin{itemize}
\item[] {\bf Theorem C.} \cite{ZP1} {\it
A sequence  $\{ d_n \}$ of elements of an Abelian group $G$ is a $T$-sequence if and only if, for every $k\ge 0$ and for every element $g\in G, g\not= 0$, there is a natural number $m$ such that $g\not\in A(k,m)$.}
\end{itemize}

Now the second step of the proofs is as follows: for each special case, we construct a special $T$-sequence $\mathbf{d}$ on $G$ and equip $G$ with the Protasov-Zelenyuk group topology generated by the chosen $T$-sequence $\mathbf{d}$.

In the paper  we consider only groups of the form $(G, \mathbf{u})$, where $\mathbf{u}$ is a $T$-sequence. Since, by definition, $(G, \mathbf{u})$ has a countable open subgroup (for example $\langle \mathbf{u} \rangle$), we are forced to restrict ourselves to countable groups only. So, in the general setting, the method of $T$-sequences does not work. We can apply this method for  bounded uncountable groups since any bounded group is a direct sum of finite ones.

{\bf 3. Computation of the von Neumann kernel.} Let $X$ be an Abelian topological group and $\mathbf{u} =\{ u_n \}$ a sequence of elements of $X^{\wedge}$. Following Dikranjan et al. \cite{DMT}, we denote by $s_{\mathbf{u}} (X)$ the set of all $x\in X$ such that $(u_n , x)\to 1$. A group $G$ with the discrete topology is denoted by $G_d$. The following theorem allows us to compute  the von Neumann kernel:

\begin{itemize}
\item[] {\bf Theorem D.} \cite{Ga1} {\it If $\mathbf{d}=\{ d_n\}$ is a $T$-sequence in an Abelian group $G$, then $\mathbf{n} (G,\mathbf{d})= s_{\mathbf{d}} \left( (G_{d})^\wedge \right)^{\perp}$ (algebraically).}
\end{itemize}

So our third step is as follows: using  Theorem D, we prove that $s_{\mathbf{d}} \left( (G_{d})^\wedge \right)^\perp =H$ in the special cases.
$\Box$

The article is organized as follows. In Section \ref{s2} we prove purely algebraical structural Theorems \ref{td01} and \ref{td02} which  allow us to reduce the proofs of the main theorems to the cases (1)-(4). Theorems \ref{t2} and  \ref{t3}, i.e. the {\it bounded} case,  are proved in Section \ref{s3}.   In the last section we prove  Theorem \ref{t1}, where $H$ is {\it unbounded},  and Theorem \ref{t4}.

\section{Algebraical Theorems} \label{s2}

We will say that a group $X$ {\it satisfies condition $(\Lambda)$} if
$X$ is a finite direct sum of groups of the form $\mathbb{Z} (p^r)^{(\kappa)}$, where $p$ is prime, $r$ is a natural number and the cardinal $\kappa$ is infinite.
In what follows we will use the following trivial corollary of Pr\"{u}fer-Baer's theorem \cite[11.2]{Fuc}.
\begin{lemma} \label{ld01}
Let $G$ be an infinite Abelian group of finite exponent. Then $G$ is the direct sum $G=G_0 \oplus G_1$ of a finite (probably trivial) subgroup $G_0$ and a subgroup $G_1$ satisfying condition $(\Lambda)$.
\end{lemma}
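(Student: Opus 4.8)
The plan is to invoke the Pr\"ufer--Baer structure theorem (\cite[11.2]{Fuc}) directly and then partition the resulting cyclic summands according to whether they occur finitely or infinitely often. Since $G$ has finite exponent, say $\exp G = n$, every cyclic direct summand has order dividing $n$; in particular each such summand is of the form $\mathbb{Z}(p^r)$ with $p^r \mid n$, and there are only finitely many prime powers with this property. This finiteness of the set of possible summand orders is precisely what forces the ``finite direct sum'' clause of condition $(\Lambda)$.

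Concretely, first I would write $G$ as a direct sum of cyclic groups via Pr\"ufer--Baer, and then group the summands by isomorphism type. This yields a decomposition
\[
G = \bigoplus_{(p,r) \in F} \mathbb{Z}(p^r)^{(\kappa_{p,r})},
\]
where $F$ is a \emph{finite} set of pairs $(p,r)$ (those with $p^r \mid n$ that actually occur) and each $\kappa_{p,r}$ is a nonzero cardinal.

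Next I would split $F$ into $F_0 = \{ (p,r)\in F : \kappa_{p,r} < \omega\}$ and $F_1 = \{ (p,r) \in F : \kappa_{p,r} \ge \omega\}$, and set
\[
G_0 = \bigoplus_{(p,r)\in F_0} \mathbb{Z}(p^r)^{(\kappa_{p,r})}, \qquad G_1 = \bigoplus_{(p,r)\in F_1} \mathbb{Z}(p^r)^{(\kappa_{p,r})}.
\]
Then $G = G_0 \oplus G_1$. The group $G_0$ is a finite direct sum of finite groups, hence finite, while $G_1$ is by construction a finite direct sum of groups $\mathbb{Z}(p^r)^{(\kappa)}$ with $\kappa$ infinite, i.e.\ it satisfies condition $(\Lambda)$. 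Since $G$ is infinite and $G_0$ is finite, $F_1$ must be nonempty, so $G_1$ is infinite, as required.

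There is no genuine obstacle here: once the structure theorem is in hand, the only observation demanding any care is that the index set $F$ of distinct summand orders is finite, which is immediate from the boundedness of $G$. The statement is thus a direct bookkeeping consequence of Pr\"ufer--Baer, exactly as the paper anticipates in calling it a ``trivial corollary.''
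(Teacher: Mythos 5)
Your proof is correct and follows essentially the same route as the paper's: apply the Pr\"ufer--Baer theorem, observe that only finitely many isomorphism types of cyclic summands can occur because $\exp G$ is finite, and separate the types occurring with finite multiplicity (giving the finite summand $G_0$) from those occurring with infinite multiplicity (giving $G_1$, which satisfies condition $(\Lambda)$). The paper phrases this as a partition of the index set by the orders $o(g_i)$ rather than by first collecting summands into powers $\mathbb{Z}(p^r)^{(\kappa_{p,r})}$, but the content is identical.
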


\begin{proof}
By \cite[11.2]{Fuc}, $G$ is a direct sum of cyclic groups, i.e., $G=\bigoplus_{i\in I} \langle g_i \rangle$, where $I$ is infinite. Denote by $J$ the subset of all indices $j\in I$ such that the set $\{ i\in I: \;  o(g_i)=o(g_j)\}$ is finite. If  $J\not= \emptyset$, set $G_0 = \oplus_{j\in J} \langle g_j \rangle$  and $G_1 = \oplus_{i\in I\setminus J} \langle g_i \rangle$. Then $G=G_0 \oplus G_1$. It is clear that $G_0$ is finite and $G_1$ satisfies condition $(\Lambda)$. If  $J$ is empty, then $G=G_1$ satisfies  condition $(\Lambda)$.
\end{proof}

\begin{lemma} \label{ld02}
Let $H=\bigoplus_{i\in I} \langle f_i \rangle$ and $J$ be a non-empty subset of $I$. Assume that  a subset $I'$ of $I\setminus J$ is such that for every $i\in I'$ there is $j(i)\in J$ for which $o(f_{j(i)}) =o(f_i)$. Set
\[
f_i^1 =f_i - f_{j(i)} \mbox{ if } i\in I', \mbox{ and } f_i^1 = f_i \mbox{ for } i\not\in I'.
\]
Then $H=\bigoplus_{i\in I} \langle f_i^1 \rangle$.
\end{lemma}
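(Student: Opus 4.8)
The plan is to realize the prescribed change of generators as an automorphism of $H$ and then invoke the elementary fact that an automorphism carries one direct decomposition into cyclic summands into another. The structural observation that makes everything work is that $I'\cap J=\emptyset$: since $I'\subseteq I\setminus J$ and each $j(i)\in J$, we have $j(i)\notin I'$, so every subtracted generator $f_{j(i)}$ is itself left unchanged, i.e. $f_{j(i)}^1=f_{j(i)}$. Thus the passage $f_i\mapsto f_i^1$ has a ``triangular'' shape: the modified generators (those with $i\in I'$) are obtained by subtracting generators that are fixed by the transformation.

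First I would define a homomorphism $\phi\colon H\to H$ by $\phi(f_i)=f_i^1$ on generators. This is legitimate by the universal property of the direct sum $\bigoplus_{i\in I}\langle f_i\rangle$, for which it suffices to check $o(f_i)f_i^1=0$; indeed, for $i\in I'$ one has $o(f_i)(f_i-f_{j(i)})=-o(f_i)f_{j(i)}=0$ because $o(f_{j(i)})=o(f_i)$, and for $i\notin I'$ it is trivial. Next I would define $\psi\colon H\to H$ by $\psi(f_i)=f_i+f_{j(i)}$ for $i\in I'$ and $\psi(f_i)=f_i$ otherwise; the same order computation shows $\psi$ is well defined. A direct check on generators, using $\phi(f_{j(i)})=f_{j(i)}$ and $\psi(f_{j(i)})=f_{j(i)}$ (valid precisely because $j(i)\notin I'$), gives $\psi\phi=\phi\psi=\mathrm{id}_H$, so $\phi$ is an automorphism of $H$.

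Finally I would conclude as follows. Since $\phi$ is an automorphism, $H=\phi(H)=\sum_{i\in I}\langle\phi(f_i)\rangle=\sum_{i\in I}\langle f_i^1\rangle$, and the sum is direct: any relation $\sum_i n_i f_i^1=0$ reads $\phi\bigl(\sum_i n_i f_i\bigr)=0$, whence $\sum_i n_i f_i=0$ by injectivity of $\phi$, and then $n_i f_i=0$ for every $i$ by directness of the original decomposition, so $n_i f_i^1=\phi(n_i f_i)=0$ termwise. Hence $H=\bigoplus_{i\in I}\langle f_i^1\rangle$. The only point requiring care is that the assignment $i\mapsto j(i)$ need not be injective, so several modified generators may subtract the same $f_{j(i)}$; this causes no difficulty, because the automorphism argument never uses injectivity of $j$ and relies only on the disjointness $I'\cap J=\emptyset$, which guarantees that both $\phi$ and $\psi$ act as the identity on every generator that gets subtracted.
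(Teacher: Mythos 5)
Your proof is correct, and it takes a genuinely different route from the paper's. The paper argues directly on relations: it takes an arbitrary dependence $\sum a_k f_{i_k}^1+\sum b_k f_{j_k}^1=0$, expands it in the original generators, observes that each index $i_k\in I'$ occurs exactly once in the expansion (because all the subtracted indices $j(i_k)$ lie in $I\setminus I'$), concludes $a_kf_{i_k}=0$ from independence of the $f_i$, and only then uses the hypothesis $o(f_{j(i_k)})=o(f_{i_k})$ to kill the leftover terms $a_kf_{j(i_k)}$ and reduce to a relation among the unmodified generators. You instead package the change of generators as an explicit automorphism $\phi$ with inverse $\psi$, exploiting the same structural fact you correctly isolate as the crux, namely $I'\cap J=\emptyset$, so that every subtracted generator is fixed and the transformation is ``triangular''; the order hypothesis enters for you only in checking that $\phi$ and $\psi$ are well defined on the cyclic summands (and is vacuous when $o(f_i)=\infty$, which is the right reading of your condition $o(f_i)f_i^1=0$ in that case). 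Your approach buys a little more than the paper's: the automorphism simultaneously delivers generation ($H=\sum\langle f_i^1\rangle$, via surjectivity) and directness (via injectivity), whereas the paper only explicitly verifies independence and leaves generation as an unstated triviality. The paper's computation is more elementary and self-contained, at the cost of a slightly fiddly bookkeeping of which indices can coincide; your remark that non-injectivity of $i\mapsto j(i)$ is harmless addresses exactly the point where that bookkeeping could go wrong.
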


\begin{proof}
It is enough to show that the set $B=\{ f_i^1 \}_{i\in I}$ is independent. Let
\[
a_1 f_{i_1}^1 +\cdots + a_n f_{i_n}^1 + b_1 f_{j_1}^1 +\cdots + b_m f_{j_m}^1 =0,
\]
where $i_1,\dots, i_n \in I'$ and $j_1,\dots, j_m \in I\setminus I'$ are distinct indices. Then
\[
a_1 f_{i_1} +\cdots + a_n f_{i_n}- a_1 f_{j(i_1)} -\cdots - a_n f_{j(i_n)} + b_1 f_{j_1} +\cdots + b_m f_{j_m} =0.
\]
Since $j(i_1),\dots,j(i_n) \in I\setminus I'$, we must have $a_1 f_{i_1} =\cdots = a_n f_{i_n}=0$. Since $o(f_{j(i_k)}) =o(f_{i_k}), k=1,\dots,n,$ we also have $a_1 f_{j(i_1)} =\cdots = a_n f_{j(i_n)}=0$. Since $f_{j_k}$ are independent, $b_1 f_{j_1}^1 =\cdots = b_m f_{j_m}^1 =0$ either.
\end{proof}

In fact the following proposition  proves case (2) of Theorem \ref{td01}:
\begin{pro} \label{pd01}
{\it Let $G=\mathbb{Z} (p^\infty) +H$, where $H$ is an infinite Abelian group of finite exponent. Then there are a finite (probably trivial) subgroup $H_0$ and an infinite subgroup $H_1$ of $H$ such that
\begin{enumerate}
\item $H=H_0 \oplus H_1$;
\item $G=\left( \mathbb{Z} (p^\infty)+ H_0 \right) \oplus H_1$;
\item $H_1$ satisfies condition $(\Lambda)$.
\end{enumerate} }
\end{pro}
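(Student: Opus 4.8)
The plan is to reduce assertion (2) to a purely internal condition on a splitting of $H$, and then to produce that splitting by the sorting-of-orders argument already used in Lemma \ref{ld01}, taking care to sweep the (finitely many) generators needed to capture the overlap $\mathbb{Z}(p^\infty)\cap H$ into the finite summand.

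First I would pin down the overlap. Write $D:=\mathbb{Z}(p^\infty)$. The proper subgroups of $D$ form the chain $0\subset\mathbb{Z}(p)\subset\mathbb{Z}(p^2)\subset\cdots$, and $D$ itself is its only infinite subgroup; since $D\cap H$ has exponent dividing $\exp H<\infty$, it cannot equal $D$, so it is one of the finite links: $D\cap H=\langle c\rangle\cong\mathbb{Z}(p^n)$ for some $n\ge 0$, with $c$ a $p$-element of $H$.

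Next comes the reduction. Because $G=D+H$, any decomposition $H=H_0\oplus H_1$ automatically gives $G=(D+H_0)+H_1$, so (2) is equivalent to the single requirement $(D+H_0)\cap H_1=\{0\}$. If $x=d+h_0=h_1$ with $d\in D$, $h_0\in H_0$, $h_1\in H_1$, then $d=h_1-h_0\in H$, hence $d\in D\cap H$; so provided we arrange $D\cap H\subseteq H_0$, we get $x=d+h_0\in H_0\cap H_1=\{0\}$. Thus everything reduces to splitting $H=H_0\oplus H_1$ with $H_0$ finite, $c\in H_0$, and $H_1$ satisfying condition $(\Lambda)$; this simultaneously yields (1) and (3), and then (2) follows for free.

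To build the splitting I would apply the Pr\"{u}fer--Baer theorem \cite[11.2]{Fuc} to write $H=\bigoplus_{i\in I}\langle f_i\rangle$ with each $f_i$ of prime-power order, and take $J$ exactly as in the proof of Lemma \ref{ld01}: the finite part $G_0=\bigoplus_{j\in J}\langle f_j\rangle$ collects the orders occurring finitely often, while $G_1=\bigoplus_{i\in I\setminus J}\langle f_i\rangle$ collects those occurring infinitely often and hence satisfies $(\Lambda)$. The element $c$ is a finite combination $c=c_0+c_1$ with $c_0\in G_0$ and $c_1$ supported on finitely many generators $f_{i_1},\dots,f_{i_s}$ of $G_1$, so I would move these finitely many summands into the finite part, setting $H_0=G_0\oplus\langle f_{i_1}\rangle\oplus\cdots\oplus\langle f_{i_s}\rangle$ and $H_1=\bigoplus_{i\in I\setminus(J\cup\{i_1,\dots,i_s\})}\langle f_i\rangle$, whence $c\in H_0$ by construction; where a named tool is preferred, Lemma \ref{ld02} effects the analogous basis adjustment. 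The only delicate point—and the main obstacle—is checking that $H_1$ still satisfies $(\Lambda)$ after this surgery: the summands removed from $G_1$ are finite in number, so in $G_1=\bigoplus_{(p,r)}\mathbb{Z}(p^r)^{(\kappa_{p,r})}$ each infinite multiplicity $\kappa_{p,r}$ loses only finitely many copies and therefore remains infinite. With $H_0$ finite, $H_1$ infinite, and $D\cap H=\langle c\rangle\subseteq H_0$, the reduction above delivers all three assertions.
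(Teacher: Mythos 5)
Your proof is correct, and it takes a genuinely different and more elementary route than the paper's. The step that does all the work is your observation that every subgroup of $\mathbb{Z}(p^\infty)$ of finite exponent is one of the finite links $\mathbb{Z}(p^n)$, so the overlap $D\cap H=\langle c\rangle$ is finite cyclic; once $c$ is swept into $H_0$, directness of $G=(D+H_0)\oplus H_1$ follows from the two-line computation $d=h_1-h_0\in D\cap H\subseteq H_0$, and the splitting itself is obtained from Lemma \ref{ld01} by transferring the finitely many cyclic summands supporting $c$ into the finite part (which, as you check, cannot destroy condition $(\Lambda)$, since each infinite multiplicity loses only finitely many copies). The paper proceeds quite differently: it first reduces to the case where $H$ is a $p$-group, invokes Baer's theorem to split $G=\mathbb{Z}(p^\infty)\oplus V$ with projections $\pi_1,\pi_V$, partitions the generators $e_\alpha$ of $H$ into finitely many classes according to $o(e_\alpha)$ and $o(\pi_1(e_\alpha))$, replaces all but one generator per class by differences $v_\alpha=e_\alpha-e_{\alpha_i}\in V$ via Lemma \ref{ld02}, absorbs finitely many further generators so that the complement $V''$ lies inside $V$, and only then verifies directness using $\pi_V$. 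Your argument buys brevity and dispenses with Baer's theorem and the projection bookkeeping altogether; what the paper's heavier construction buys is a template that is repeated almost verbatim in Proposition \ref{pd02}, where $\mathbb{Z}(p^\infty)$ is replaced by a countable group $\langle A\rangle$ whose intersection with $H$ need not be finite or cyclic, so the identification of the overlap as a single finite cyclic group is no longer available in the same form. Your closing checks (that $H_1$ is infinite because $H_0$ is finite and $H$ is infinite, and that $H_0$ may be trivial) match the statement's requirements.
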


\begin{proof}
{\it Step} 1.
By \cite[Theorem 2.1]{Fuc}, $H$ can be represented as a direct sum of its $p'$-subgroups $H_{p'}$.
Set $Y=\oplus_{p' \not= p} H_{p'}$ and $G_p = \mathbb{Z} (p^\infty) +H_p$. Then $G= G_p \oplus Y$ and $H=H_p \oplus Y$. Since $Y$ has finite exponent, then, by Lemma \ref{ld01}, $Y=Y_0 \oplus Y_1$, where $Y_0$ is finite and $Y_1$  satisfies condition $(\Lambda)$. Thus, if we will find the desired decomposition $H_p = H_0^p \oplus H_1^p$ for $H_p$ and $G_p$, then, setting $H_0 = H_0^p \oplus Y_0$ and $H_1 = H_1^p \oplus Y_1$, we will prove the proposition.

So, it is enough to prove the proposition assuming that $H$ is a $p$-group.

{\it Step} 2. By the Baer theorem \cite[18.1]{Fuc}, there is a subgroup $V$ of $G$ such that $G= \mathbb{Z} (p^\infty)\oplus V$. The natural projections of $G$ onto $\mathbb{Z} (p^\infty)$ and $V$ we denote by $\pi_1$ and $\pi_V$ respectively. Since $V=\pi_V (H)$, the group $V$  also has finite exponent.

By \cite[11.2]{Fuc},  $H=\oplus_{\alpha\in I} \langle e_\alpha \rangle$, where $I$ is an infinite set of indices.
We call two indices $\alpha$ and $\beta$ equivalent if
\begin{equation} \label{11}
o(e_\alpha)=o(e_\beta) \mbox{ and } o\left( \pi_1 (e_\alpha) \right) =o\left( \pi_1 (e_\beta) \right).
\end{equation}
This relation is reflexive, symmetric and transitive. So it is possible to divide $I$ into disjoint classes of equivalent indices. Since $\exp H<\infty$, there is only the finite set of disjoint classes $I_1,\dots, I_s$. Since $\pi_1 (H)$ is a cyclic group, by (\ref{11}), without loss of generality we may assume that $\pi_1 (e_\alpha) =\pi_1 (e_\beta)$ for every $\alpha, \beta \in I_i$ and $1\leq i\leq s$.

For every $1\leq i\leq s$ fix an $\alpha_i \in I_i$ and, for $\alpha\in I_i \setminus \{ \alpha_i\}$, set $v_\alpha =e_\alpha - e_{\alpha_i}$. Then $\pi_1 (v_\alpha)=0$ and hence $v_\alpha \in V$. Set $J=I\setminus \{ \alpha_1,\dots, \alpha_s \}$. Then, by Lemma \ref{ld02}, we have
\[
H= H' \oplus V', \mbox{ where } H' =\oplus_{i=1}^s \langle e_{\alpha_i} \rangle \mbox{ and } V' = \oplus_{\alpha \in J} \langle v_\alpha \rangle  \subseteq V.
\]
In particular, $V= \pi_V (H') + V'$.

(a) Let $\pi_V (H') \cap V' \not= \{ 0\}$. Since $H'$ is finite, there is the minimal finite subset $\{ \beta_1,\dots,\beta_l \} $ of $J$ such that  $\pi_V (H') \cap V' \subseteq R$, where $R=\bigoplus_{i=1}^l \langle v_{\beta_i}\rangle$. Then
\[
H=H'' \oplus V'', \mbox{ where } H'' = H' \oplus R   \mbox{ and } V'' = \bigoplus_{\alpha \in J\setminus \{ \beta_1,\dots,\beta_l \}} \langle v_\alpha \rangle  \subseteq V'.
\]

(b) Let $\pi_V (H') \cap V' = \{ 0\}$. Set $H'' = H', R=0$ and $V'' = V'$.

Therefore in all cases we have
\[
H=H'' \oplus V''  \mbox{ and } G= \left( \mathbb{Z} (p^\infty) +H''\right) + V'',
\]
where $H'' = H' \oplus R$ is finite, $R\subset V$ and $R\cap V'' =\{ 0\}$.

We claim that {\it the last sum is direct}, i.e., $\left( \mathbb{Z} (p^\infty) +H''\right) \cap V'' =\{ 0\}$. Indeed, let $t= f+(h' +r) \in \left( \mathbb{Z} (p^\infty) +H''\right) \cap V''$, where $f\in \mathbb{Z} (p^\infty)$,  $h'\in H'$ and $r\in R$. Then $t=\pi_V (t) =\pi_V (h') +r$ and $\pi_V (h') = t-r \in V'' +R=V'$. So $\pi_V (h') \in \pi_V (H')\cap V' \subseteq R$. Hence $t= \pi_V (h') +r \in R\cap V'' =\{ 0\}$.

By Lemma \ref{ld01}, there is a representation $V''=H'_0 \oplus H_1$, where $H'_0$ is finite and $H_1$ satisfies condition $(\Lambda)$. It is remaind to put $H_0 = H'' \oplus H'_0$.
\end{proof}

\begin{lemma} \label{ld03}
Let an Abelian  $p$-group $G$ have the form $G=\langle A \rangle +H$, where $\exp H<\infty$ and $A=\{ e_i\}_{i=1}^\infty$ is an independent sequence in $G$ such that $o(e_i)\geq \exp H$ for every $i\geq 1$. Then $\langle A \rangle$ is a direct summand of $G$.
\end{lemma}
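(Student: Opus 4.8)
The plan is to show that $\langle A\rangle$ is a \emph{pure} subgroup of $G$ whose quotient $G/\langle A\rangle$ is bounded; the conclusion then follows from the classical fact that a pure subgroup whose quotient is a direct sum of cyclic groups is a direct summand \cite[Theorem 28.2]{Fuc}, combined with Pr\"ufer's theorem \cite[11.2]{Fuc} that a bounded group is such a direct sum. Write $S=\langle A\rangle = \bigoplus_{i=1}^\infty \langle e_i\rangle$, using that $A$ is independent. Since $G$ is a $p$-group and $H\subseteq G$, the group $H$ is a $p$-group of finite exponent, say $\exp H = p^m$, and by hypothesis $o(e_i)\ge p^m$ for every $i$. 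First I would record that $G/S$ is bounded: from $G=S+H$ the second isomorphism theorem gives $G/S\cong H/(H\cap S)$, whose exponent divides $p^m$, so $G/S$ is a direct sum of cyclic groups.

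The heart of the argument is to verify that $S$ is pure in $G$. Since $G$ is a $p$-group (so multiplication by any integer prime to $p$ is an automorphism), it suffices to check that $S\cap p^kG = p^kS$ for every $k\ge 0$, and only the inclusion $\subseteq$ needs proof. Let $x\in S$ with $x=p^k g$ for some $g\in G$, and write $g=s'+h$ with $s'\in S$ and $h\in H$. Then $p^k h = x-p^k s'\in S$, so it is enough to prove that $y:=p^k h$ lies in $p^kS$. If $k\ge m$ this is trivial, as $y=p^{k-m}(p^m h)=0$. If $k<m$, writing $y=\sum_i c_ie_i$ I would use $p^{m-k}y=p^m h=0$ together with the independence of the $e_i$ to conclude $p^{m-k}c_ie_i=0$ for each $i$; hence each $c_ie_i$ lies in the subgroup of $\langle e_i\rangle$ annihilated by $p^{m-k}$, which (since $o(e_i)=p^{r_i}\ge p^m$) equals $p^{r_i-(m-k)}\langle e_i\rangle\subseteq p^k\langle e_i\rangle$, because $r_i-(m-k)\ge k$. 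Thus each $c_ie_i\in p^k\langle e_i\rangle$, so $y\in p^kS$ and therefore $x=p^ks'+y\in p^kS$, establishing purity.

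The main obstacle is precisely this height computation: one must exploit the hypothesis $o(e_i)\ge\exp H$ to guarantee that any element of $S$ which has become $p^k$-divisible in $G$ (after correcting by a $p^k$-multiple of an element of $H$) is already $p^k$-divisible \emph{inside} $S$. The point is that $y=p^kh$ is automatically killed by $p^{m-k}$, and the lower bound on the orders $o(e_i)$ forces every homogeneous component $c_ie_i$ into $p^k\langle e_i\rangle$. Once purity is established, the boundedness of $G/S$ makes the splitting theorem apply immediately, yielding a complement $K$ with $G=S\oplus K=\langle A\rangle\oplus K$, as required.
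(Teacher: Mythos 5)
Your proof is correct and follows essentially the same route as the paper's: reduce to showing $\langle A\rangle$ is pure via Kulikov's theorem (pure subgroup with direct-sum-of-cyclics quotient is a summand), then verify purity by exploiting that $p^k h$ has order at most $\exp H/p^k$ while the $e_i$ have order at least $\exp H$, forcing each component into $p^k\langle e_i\rangle$. Your organization—isolating $y=p^k h$ and bounding its order directly—is a slightly cleaner packaging of the paper's coefficient comparison, but the key idea is identical.
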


\begin{proof}
Since $G/\langle A \rangle$ has finite exponent, it is a direct sum of cyclic groups \cite[Theorem 11.2]{Fuc}. By Kulikov's theorem \cite[25.2]{Fuc}, it is enough to show that $\langle A \rangle$ is a pure subgroup of $G$. By \cite[23 I]{Fuc}, we have to prove that $p^l \langle A \rangle = \langle A \rangle \cap p^l G$ for every natural number $l$.

Let $g=p^l g_0 \in \langle A \rangle \cap p^l G$, where $g_0 \in G$. We may assume that $g$ and $g_0$ have the form:
\[
\begin{split}
g & = a_1 p^{l_1} e_{i_1} +\cdots + a_m p^{l_m} e_{i_m} + a_{m+1} p^{l_{m+1}} e_{i_{m+1}} +\cdots + a_n p^{l_n} e_{i_n}, \\
g_0 & = b_1 p^{r_1} e_{i_1} +\cdots + b_m p^{r_m} e_{i_m} + c_{1} p^{s_{1}} e_{j_{1}} +\cdots + c_q p^{s_q} e_{j_q} +h,
\end{split}
\]
where $( a_i,p)= ( b_j,p)= ( c_k,p)=1$ for all $i$, $j$ and $k$, all the indices $i_1,\dots, i_n, j_1,\dots, j_q$ are distinct, and $h\in H$. So
\[
p^l h= \sum_{k=1}^m \left( a_k p^{l_k} - b_k p^{r_k +l}\right) e_{i_k} + \sum_{k=m+1}^n  a_k p^{l_k} e_{i_k} - \sum_{k=1}^q  c_k p^{s_k+l} e_{j_k}.
\]
Since $o(p^l h)\leq \exp H : p^l$ and $o(e_i)\geq \exp H$, we must have $l_k \geq l$ for every $1\leq k\leq n$. In such a case we have
\[
g=p^l g', \mbox{ where } g' = a_1 p^{l_1 -l} e_{i_1} +\cdots  + a_n p^{l_n -l} e_{i_n} \in \langle A \rangle.
\]
\end{proof}

The proof of the next proposition essentially repeats the proof of Proposition \ref{pd01}.
\begin{pro} \label{pd02}
{\it Let an Abelian $p$-group $G$ have the form $G=\langle A \rangle +H$, where $H$ is an uncountable subgroup of finite exponent and $A=\{ e_i\}_{i=1}^\infty$ is an independent sequence in $G$ such that $o(e_i)\geq \exp H$ for every $i\geq 1$. Then there are a countable subgroup $H_0$ and an uncountable subgroup $H_1$ of $H$ such that
\begin{enumerate}
\item $H=H_0 \oplus H_1$;
\item $G=\left( \langle A \rangle + H_0 \right) \oplus H_1$;
\item $H_1$ satisfies condition $(\Lambda)$.
\end{enumerate} }
\end{pro}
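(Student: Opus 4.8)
The plan is to mirror, essentially line for line, the proof of Proposition \ref{pd01}, with the independent system $\langle A\rangle$ playing the role there played by $\mathbb{Z}(p^\infty)$. Since $G$ is already assumed to be a $p$-group, I can omit the primary-decomposition reduction (Step 1 of that proof) and pass directly to producing a complement for $\langle A\rangle$. Here Lemma \ref{ld03} replaces Baer's theorem: its hypotheses $\exp H<\infty$ and $o(e_i)\ge\exp H$ are exactly what we are given, so $\langle A\rangle$ is a direct summand and I may write $G=\langle A\rangle\oplus V$ with projections $\pi_A,\pi_V$. Because $\pi_V$ kills $\langle A\rangle$ and $G=\langle A\rangle+H$, one has $V=\pi_V(H)$, so $V$ inherits finite exponent.

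Next I would decompose $H=\bigoplus_{\alpha\in I}\langle h_\alpha\rangle$ into cyclics by Pr\"ufer--Baer (with $I$ uncountable, as $H$ is), and declare $\alpha\sim\beta$ iff $o(h_\alpha)=o(h_\beta)$ and $\pi_A(h_\alpha)=\pi_A(h_\beta)$. This is the one place where the argument genuinely departs from Proposition \ref{pd01}: there $\pi_1(H)$ is cyclic, which forces only finitely many classes and a finite representative subgroup; here $\pi_A(H)$ is merely a bounded subgroup of the \emph{countable} group $\langle A\rangle$, hence countable, so there are only countably many pairs $(o(h_\alpha),\pi_A(h_\alpha))$ and thus countably many classes. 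Fixing one representative $\alpha_i$ per class and setting $v_\alpha=h_\alpha-h_{\alpha_i}$ for $\alpha$ in class $i$, equality of the $\pi_A$-values gives $\pi_A(v_\alpha)=0$, i.e. $v_\alpha\in V$, while equality of orders licenses Lemma \ref{ld02}, yielding $H=H'\oplus V'$ with $H'=\bigoplus_i\langle h_{\alpha_i}\rangle$ countable and $V'=\bigoplus_\alpha\langle v_\alpha\rangle\subseteq V$ uncountable.

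From this point the argument is the verbatim analogue of Proposition \ref{pd01}. I would split into the cases $\pi_V(H')\cap V'\ne\{0\}$ and $\pi_V(H')\cap V'=\{0\}$; in the first, since $H'$, hence $\pi_V(H')$, hence $\pi_V(H')\cap V'$, is now \emph{countable} rather than finite, I choose a countable subset of indices carving out $R\subseteq V'$ with $\pi_V(H')\cap V'\subseteq R$ and set $H''=H'\oplus R$ (still countable), $V''$ the direct sum of the remaining $\langle v_\alpha\rangle$; in the second case $H''=H'$, $R=\{0\}$, $V''=V'$. In both cases $H=H''\oplus V''$ and $G=(\langle A\rangle+H'')+V''$, and the identical computation shows this last sum is direct: taking $t=f+h'+r\in(\langle A\rangle+H'')\cap V''$ with $f\in\langle A\rangle$, $h'\in H'$, $r\in R$, one applies $\pi_V$ (which fixes $t$ and $r$ and kills $f$) to get $\pi_V(h')=t-r\in V'$, whence $\pi_V(h')\in\pi_V(H')\cap V'\subseteq R$ and therefore $t=\pi_V(h')+r\in R\cap V''=\{0\}$. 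Finally Lemma \ref{ld01} decomposes the uncountable finite-exponent group $V''=H_0'\oplus H_1$ with $H_0'$ finite and $H_1$ satisfying $(\Lambda)$; putting $H_0=H''\oplus H_0'$ (countable) gives $H=H_0\oplus H_1$ and $G=(\langle A\rangle+H_0)\oplus H_1$, as required.

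I do not expect a genuine obstacle, since the skeleton is identical to Proposition \ref{pd01}. The only points needing care are the substitution of Lemma \ref{ld03} for Baer's theorem in obtaining the complement $V$, and the bookkeeping that each object flagged there as \emph{finite}---the representative subgroup $H'$, the correction subgroup $R$, and hence $H_0$---becomes merely \emph{countable} here. That upgrade is precisely what the countability of $\langle A\rangle$ (equivalently, of $\pi_A(H)$) purchases, and it is exactly the feature the statement demands.
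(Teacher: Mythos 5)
Your proposal is correct and follows essentially the same route as the paper's own proof, which itself is presented as a repetition of Proposition \ref{pd01} with Lemma \ref{ld03} supplying the complement $V$ in place of Baer's theorem and with the equivalence relation $o(h_\alpha)=o(h_\beta)$, $\pi_A(h_\alpha)=\pi_A(h_\beta)$ producing countably many classes because $\langle A\rangle$ is countable. All the points you flag (countable rather than finite $H'$, $R$, $H_0$; the directness computation via $\pi_V$; the final application of Lemma \ref{ld01}) match the paper exactly.
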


\begin{proof}
By Lemma \ref{ld03}, there is a subgroup $V$ of $G$ such that $G= \langle A \rangle \oplus V$. The natural projections of $G$ onto $\langle A \rangle $ and $V$ we denote by $\pi_A$ and $\pi_V$ respectively. Since $V=\pi_V (H)$, the group $V$  also has finite exponent.

By \cite[11.2]{Fuc},  $H=\oplus_{\alpha\in I} \langle e_\alpha \rangle$, where $I$ is an uncountable set of indices.
We call two indices $\alpha$ and $\beta$ equivalent if
\[
o(e_\alpha)=o(e_\beta) \mbox{ and }  \pi_A (e_\alpha)  = \pi_A (e_\beta).
\]
This relation is reflexive, symmetric and transitive. So it is possible to divide $I$ into disjoint classes of equivalent indices. Since $\langle A \rangle $ is countable, there is  at most countable set of disjoint classes $\{ I_i \}_{i=1}^s, s\leq \aleph_0$.

For every $1\leq i\leq s$ fix an $\alpha_i \in I_i$ and, for $\alpha\in I_i \setminus \{ \alpha_i\}$, set $v_\alpha =e_\alpha - e_{\alpha_i}$. Then $\pi_A (v_\alpha)=0$ and hence $v_\alpha \in V$. Set $J=I\setminus \{ \alpha_i\}_{i=1}^s$. Then, by Lemma \ref{ld02}, we have
\[
H= H' \oplus V', \mbox{ where } H' =\oplus_{i=1}^s \langle e_{\alpha_i} \rangle \mbox{ and } V' = \oplus_{\alpha \in J} \langle v_\alpha \rangle  \subseteq V.
\]
In particular, $V= \pi_V (H') + V'$.

(a) Let $\pi_V (H') \cap V' \not= \{ 0\}$. Since $H'$ is countable, there is the minimal countable subset $\{ \beta_i \}_{i=1}^l , l\leq \aleph_0, $ of $J$ such that  $\pi_V (H') \cap V' \subseteq R$, where $R=\bigoplus_{i=1}^l \langle v_{\beta_i}\rangle$. Then
\[
H=H'' \oplus V'', \mbox{ where } H'' = H' \oplus R   \mbox{ and } V'' = \bigoplus_{\alpha \in J\setminus \{ \beta_1,\dots,\beta_l \}} \langle v_\alpha \rangle  \subseteq V'.
\]

(b) Let $\pi_V (H') \cap V' = \{ 0\}$. Set $H'' = H', R=0$ and $V'' = V'$.

Therefore in all cases we have
\[
H=H'' \oplus V''  \mbox{ and } G= \left( \langle A \rangle +H''\right) + V'',
\]
where $H'' = H' \oplus R$ is countable, $R\subset V$ and $R\cap V'' =\{ 0\}$.

We claim that {\it the last sum is direct}, i.e., $\left( \langle A \rangle  +H''\right) \cap V'' =\{ 0\}$. Indeed, let $t= f+(h' +r) \in \left( \langle A \rangle  +H''\right) \cap V''$, where $f\in \langle A \rangle $,  $h'\in H'$ and $r\in R$. Then $t=\pi_V (t) =\pi_V (h') +r$ and $\pi_V (h') = t-r \in V'' +R=V'$. So $\pi_V (h') \in \pi_V (H')\cap V'  \subseteq R$. Hence $t= \pi_V (h') +r \in R\cap V'' =\{ 0\}$.

By Lemma \ref{ld01}, there is a representation $V''=H'_0 \oplus H_1$, where $H'_0$ is finite and $H_1$ satisfies condition $(\Lambda)$. It is remaind to put $H_0 = H'' \oplus H'_0$.
\end{proof}

The following lemma generalized Lemma 3.1 of \cite{Ga3}.
\begin{lemma} \label{ld4}
Let a sequence $\{ b_n\}$  in an Abelian group $G$ be independent and $H$ be a finite subgroup of $G$. Then there is $n_0$ such that $H\cap  \langle b_{n_0}, b_{n_0 +1},\dots \rangle =\{ 0\}$.
\end{lemma}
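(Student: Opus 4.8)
The plan is to exploit the independence of $\{b_n\}$ to show that the descending chain of tail subgroups $T_n := \langle b_n, b_{n+1},\dots\rangle$ has trivial intersection, and then to use the finiteness of $H$ to extract a single index at which $H$ already meets the tail trivially.

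First I would unpack what independence gives. By definition $\langle \{b_n\}\rangle = \bigoplus_n \langle b_n\rangle$, so every $x\in \langle\{b_n\}\rangle$ has a unique representation as a finite sum $\sum_i c_i$ with $c_i\in\langle b_i\rangle$ and only finitely many $c_i$ nonzero; its \emph{support} is thus a finite set of indices. Again by independence, $T_n = \bigoplus_{i\ge n}\langle b_i\rangle$, so $x\in T_n$ is equivalent to $\mathrm{supp}\,x\subseteq\{n,n+1,\dots\}$. Consequently $\bigcap_n T_n = \{0\}$: an element lying in every $T_n$ would have support contained in every tail, hence empty support, hence be $0$.

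Next I would pass to $H$. The subgroups $H\cap T_1 \supseteq H\cap T_2 \supseteq \cdots$ form a descending chain inside the \emph{finite} group $H$, so they stabilize at some index $n_0$, and the stabilized value equals $\bigcap_n (H\cap T_n) = H\cap \bigcap_n T_n = \{0\}$ by the previous step. Hence $H\cap T_{n_0}=\{0\}$, as required. Equivalently and more concretely: for each of the finitely many nonzero $h\in H$, either $h\notin\langle\{b_n\}\rangle$, in which case $h$ lies in no $T_n$, or $h$ has finite support and $h\notin T_n$ for every $n$ exceeding $\max(\mathrm{supp}\,h)$; taking the maximum of these finitely many thresholds yields a single $n_0$ that works.

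The argument is essentially immediate once independence is unpacked, and I do not expect a serious obstacle. The only point requiring a little care is that elements of $H$ need not lie in $\langle\{b_n\}\rangle$ at all; but this only helps, since such an element lies in no $T_n$ whatsoever. The genuine crux — and the one place where both hypotheses are really used — is the combination of ``each element eventually leaves the tail'' (from independence, via finiteness of support) with ``there are only finitely many elements to control'' (from finiteness of $H$), which together produce the uniform index $n_0$.
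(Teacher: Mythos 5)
Your proof is correct. It follows the same overall skeleton as the paper's --- each nonzero element of $H$ eventually fails to lie in the tail subgroup, and finiteness of $H$ converts these individual thresholds into one uniform $n_0$ --- but the mechanism for the per-element step is genuinely different. The paper invokes Lemma 3.1 of its reference [Ga3], which gives the strictly stronger fact that for each nonzero $h$ the whole set $\{h, b_{n}, b_{n+1},\dots\}$ is independent for a suitable $n$, and then simply takes the maximum of the finitely many indices obtained for the nonzero elements $h_1,\dots,h_k$ of $H$. You instead unpack independence into the direct-sum decomposition $\langle\{b_n\}\rangle=\bigoplus_n\langle b_n\rangle$, so that every element of this subgroup has a well-defined finite support and therefore exits every sufficiently late tail $T_n$, while an element of $H$ outside $\langle\{b_n\}\rangle$ lies in no tail at all; the descending chain $H\cap T_1\supseteq H\cap T_2\supseteq\cdots$ stabilizing at $H\cap\bigcap_n T_n=\{0\}$ is a clean way to package the uniformity. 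Your version is self-contained and uses only the definition of independence, whereas the paper's is shorter on the page but outsources the substantive content to an external lemma; nothing is lost either way, and your remark that the case $h\notin\langle\{b_n\}\rangle$ ``only helps'' correctly disposes of the one situation the support argument does not directly address.
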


\begin{proof}
Let $h_1,\dots, h_k$ be an enumeration of all nonzero elements of $H$. For every $1\le i\le k$, by Lemma 3.1 of \cite{Ga3}, we can choose $n_i$ such that the set $\{ h_i,  b_{n_i}, b_{n_i +1},\dots \}$ is independent. Then $n_0 =\max\{ n_1,\dots, n_k \}$ is desired.
\end{proof}

\begin{pro} \label{pd03}
{\it Let $G$ be an Abelian $p$-group of the form $G=\langle A\rangle +H$, where $H$ is a nonzero countable group of finite exponent and $A=\{ g_i\}_{i= 0}^\infty$ is an independent sequence such that either
\begin{enumerate}
\item[(a)] $\exp H \le o(g_0)< o(g_1)<\dots$, or
\item[(b)] $\exp H=o(g_i)$ for every $i\ge 0$.
\end{enumerate}
Then $G$ has a subgroup $G_0$ of the form
\[
G_0=\bigoplus_{i=0}^\infty (H_i + \langle e_i\rangle),
\]
where
\begin{enumerate}
\item the independent sequence $\{ e_i\}$ satisfies respectively one of the conditions $(a)$ or $(b)$;
\item there is $M\le \infty$ such that $H_j$ is a finite nonzero subgroup of $G$ for every $0\le j <M$, and, if $M<\infty$, $H_j = \{ 0\}$ for each $j\ge M$;
\item $H= \bigoplus_{i=0}^\infty H_i$.
\end{enumerate} }
\end{pro}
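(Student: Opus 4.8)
The plan is to reduce to a splitting $G=\langle A\rangle\oplus V$ and then to assemble the block decomposition by a recursion that peels off one finite block at a time, keeping a fresh tail of the independent family $A$ available at every stage via Lemma~\ref{ld4}. Since $A=\{g_i\}$ is independent with $o(g_i)\ge\exp H$ in both cases (a) and (b), Lemma~\ref{ld03} shows that $\langle A\rangle$ is a direct summand, say $G=\langle A\rangle\oplus V$; writing $\pi_A,\pi_V$ for the projections and using $G=\langle A\rangle+H$ one gets $V=\pi_V(H)$, a countable $p$-group with $\exp V\le\exp H$. The useful consequence is that, because $H$ and $V$ are bounded while the $o(g_i)$ need not be, each generator $e_i$ I produce will be of the form $g_j+v$ with $v\in V$ (or a small correction in $H$); such an element keeps $o(e_i)=o(g_j)$ and leaves the family $\{e_i\}$ independent, so the prescribed order pattern (a) or (b) is preserved as long as I draw the indices $j$ from a suitable subsequence of $\{g_i\}$.

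Next I would run the recursion. Enumerate cyclic generators $h_1,h_2,\dots$ of $H$. At stage $n$ the aim is a finite list of blocks $C_0,\dots,C_{t_n}$, each $C_s=H_s+\langle e_s\rangle$ finite, such that $\bigoplus_{s\le t_n}C_s$ is direct and contains $h_1,\dots,h_n$, the partial sum $\bigoplus_{s\le t_n}H_s$ is a direct summand of $H$, and the chosen orders $o(e_s)$ realise the initial segment of the pattern. To absorb $h_{n+1}$ I split it as $\pi_A(h_{n+1})+\pi_V(h_{n+1})$ and note that $\pi_A(h_{n+1})$ has order $\le\exp H$, hence lies in $\bigoplus_i(o(g_i)/\exp H)\langle g_i\rangle$, the part of $\langle A\rangle$ of order $\le\exp H$. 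Lemma~\ref{ld4}, applied to $\{g_i\}$ and the finite subgroup generated so far, produces a fresh index $j$ that is independent of all previous data and, in case (a), large enough to continue the strictly increasing orders; I then form the new block on $e_{t_n+1}=g_j+v$, attach a finite piece of $H$ as $H_{t_n+1}$, and use substitutions of the type $f_i\mapsto f_i-f_{j(i)}$ justified by Lemma~\ref{ld02} (exactly as in Propositions \ref{pd01} and \ref{pd02}) to make the new block split off. Passing to the union yields a subgroup $G_0=\bigoplus_i C_i\supseteq H$ with $H=\bigoplus_i H_i$; whether $M$ is finite or infinite is governed by whether the nonzero pieces $H_i$ run out, and in case (a) one appends further blocks carrying large $e_i$ (with $H_i=\{0\}$ once $H$ is exhausted) to realise all the required orders.

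The main obstacle is preserving internal directness of $\bigoplus_i C_i$ across infinitely many steps against two kinds of entanglement. First, $H\cap\langle A\rangle$ need not vanish and $H$ generally sits inside $\langle A\rangle\oplus V$ only as a subdirect product, so a single $h\in H$ may spread over many $g_i$ and the naive choice $e_i=g_i$ fails; consequently the block generator $e_i$ may be forced to overlap $H_i$, which is precisely why the statement writes $H_i+\langle e_i\rangle$ and not $H_i\oplus\langle e_i\rangle$ (e.g.\ when $H=G$ in case (b), forcing $H_i=\langle e_i\rangle$, the overlap is unavoidable). Second, in case (a) the constraint $o(e_0)<o(e_1)<\cdots$ limits which fresh $g_j$ may serve at each stage. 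Both difficulties are handled by combining the projection bookkeeping from Propositions \ref{pd01}--\ref{pd02} with Lemma~\ref{ld4}: the latter guarantees that the finite entanglement accumulated at any stage can be separated from an unused tail of $A$, and since only the inequality $o(e_i)=o(g_j)\ge\exp H$ is ever used, cases (a) and (b) can be treated uniformly.
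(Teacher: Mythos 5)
Your proposal has a genuine gap at its central step. The recursion hinges on the claim that Lemma~\ref{ld4} lets you separate the finitely many blocks already built from ``all previous data'' by passing to a fresh tail of $A$. But Lemma~\ref{ld4} only separates a \emph{finite} subgroup from a tail of an independent sequence. What the induction actually requires is
\[
\bigl( \langle g_{k+1}, g_{k+2},\dots \rangle + X_1 \bigr) \cap \bigl( H_0 + \langle e_0 \rangle \bigr) = \{ 0\},
\]
where $X_1$ is the \emph{infinite} complement of $H_0$ in $H$; elements $y+z$ with $y$ in the tail and $z\in X_1$ can land in the finite block even though neither summand does, and this can happen for \emph{every} tail. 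A concrete instance: let $o(g_i)=p^2$ for all $i$, $H=\langle g_0\rangle\oplus\bigoplus_{i\ge 1}\langle pg_i-pg_0\rangle$, $e_0=g_0$, $H_0=\langle g_0\rangle$. Then for each $k$, taking $y=pg_{k+1}$ in the tail and $z=pg_0-pg_{k+1}\in X_1$ gives $y+z=pg_0\ne 0$ in $H_0+\langle e_0\rangle$, so no tail of the original sequence works and your recursion stalls at the first stage. (Here the correct choice is $e_i=g_i-g_0$, i.e.\ one must replace the $g_i$ by \emph{differences}, not pass to a subsequence.)

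This is exactly the content of Case~2.2 in the paper's proof, which is the heart of the argument and is absent from your proposal: when every tail fails, one fixes the maximal $m$ for which some nonzero $h\in H_0+\langle e_0\rangle$ admits infinitely many representations $y_k+z_k=h$ with $o(y_k)=p^m$, normalizes the $y_k$ to $y'_k$, and replaces the sequence by $g'_k=p^{t_{2k+1}-t_{2k}}y'_{2k+1}-y'_{2k}$ (resp.\ $y'_{2k+1}-y'_{2k}$ in case (b)), so that $p^{t_{2k}}g'_k\in X_1$; the maximality of $m$ then forces some tail of the new sequence to satisfy the required trivial-intersection condition. Your invocation of Lemma~\ref{ld02} does not substitute for this: that lemma re-chooses a basis of $H$ by subtracting designated basis elements of $H$ from one another, whereas here the modification must be performed on (combinations of) the $g_i$, and it must be accompanied by the order-maximality argument showing the modified sequence actually succeeds. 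The preliminary splitting $G=\langle A\rangle\oplus V$ via Lemma~\ref{ld03} is harmless but does not remove this obstruction, which is why the paper does not use it in this proof.
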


\begin{proof}
{\it Case 1. $\langle A\rangle \cap H$ is finite.} By Lemma \ref{ld4} we can choose $k\ge 0$ such that $H\cap  \langle g_{k}, g_{k +1},\dots \rangle =\{ 0\}$. Set $e_i = g_{k +i}, i\ge 0$. Let $H=
\bigoplus_{i=0}^{M-1}  \langle h_i\rangle$, where $M\le \aleph_0$ \cite[11.2]{Fuc}. Put $G_0= \langle e_{0}, e_{1},\dots \rangle +H$. Then we have
\[
G_0 =\bigoplus_{i=0}^\infty (H_i \oplus \langle e_i\rangle),
\]
where $H_i =\langle h_i\rangle$ if $i<M$, and $H_i =0$ for $i\ge M$. So $G_0$ is desired.

{\it Case 2. $\langle A\rangle \cap H$ is infinite.} Let $H=\bigoplus_{i=0}^\infty  \langle h_i\rangle$ \cite[11.2]{Fuc}.

We will construct the sequences $\{ H_n\}$ and $\{ e_n\}$ by induction.
Set
\[
G^0 =G,\quad H^0 =H, \; \mbox{ and } \; g_j^0 =g_j, j\ge 0.
\]
Put $ e_0 =g_0^0.$ Choose the minimal index $\kappa_1 \ge 0$ such that
\[
H^0 \cap \langle e_0 \rangle = \left( \bigoplus_{i=0}^{\kappa_1} \langle h_i\rangle \right) \cap \langle e_0 \rangle .
\]
Set
\[
Y_k^1 =\langle \left\{ g^0_{k+i} \right\}_{i=1}^\infty \rangle, k\ge 0, \quad H_0 =\bigoplus_{i=0}^{\kappa_1} \langle h_i\rangle, \; \mbox{ and } X_1 = \bigoplus_{i=\kappa_1 +1}^\infty \langle h_i\rangle.
\]
Then $H^0 = H_0 \oplus X_1$ and
\begin{equation} \label{e03-1}
(H_0 + \langle e_0 \rangle) \cap X_1 =\{ 0\}.
\end{equation}
Indeed, let $ae_0 +h_0 = x$, where $a$ is integer, $h_0 \in H_0$ and $x \in X_1$. Then $ae_0 = x- h_0 \in H^0$ and hence $ae_0 \in H_0$. Thus $x=ae_0 +h_0 \in H_0$ and hence $x=0$.

We distinguish between two cases.

{\it Case 2.1. There is $k\ge 0$ such that}
\[
\left( Y_k^1 + X_1 \right) \cap \left( H_0 + \langle e_0 \rangle \right) =\{ 0\}.
\]
Then we put
\[
H^1 =X_1, \quad g_j^1 =g^0_{k+1 +j}, j\ge 0, \; \mbox{ and } G^1 = \langle \left\{ g^1_j \right\}_{j=0}^\infty \rangle + H^1.
\]
So $(H_0 + \langle e_0 \rangle) \cap G^1 =\{ 0\}$ and we get over to the second step for $G^1, H^1$ and the independent sequence $\left\{ g^1_j \right\}_{j=0}^\infty$ satisfying respectively one of the conditions (a) or (b).

{\it Case 2.2 For every $k\ge 0$,}
\[
\left( Y_k^1 + X_1 \right) \cap \left( H_0 + \langle e_0 \rangle \right) \not= \{ 0\}.
\]
In such a case, because of finiteness of $H_0 + \langle e_0 \rangle$ and since $\exp X_1 <\infty$, we can choose the maximal natural number $m$ satisfying the following condition:
\begin{itemize}
\item[($\alpha$)] there is a nonzero element $h\not= 0$ of $H_0 + \langle e_0 \rangle$ such that for infinitely many indices $k$ there are $y_k \in Y_k^1$ and $z_k \in X_1$ for which
    \[
    y_k + z_k =h \quad \mbox{ and } \quad o(y_k) = p^m.
    \]
\end{itemize}
Fix $h$ satisfying ($\alpha$) and choose a sequence of indices of the form
\begin{equation} \label{e03-2}
0 < i^0_{1} < \cdots < i^0_{s_1} < i^1_{1} < \cdots < i^1_{s_2} < i^2_{1} < \cdots,
\end{equation}
a sequence of integers $a^k_{1}, \dots , a^k_{s_k}, (a_i^j, p)=1, k\ge 0$, a sequence of natural numbers $r^k_{1}, \dots , r^k_{s_k}, k\ge 0$, and a sequence $z_0, z_1, \dots$ in $X_1$ such that, for all $k\ge 0$,
\[
0\not= h=  a^k_{1} p^{r^k_{1}} g_{i^k_{1}}^0 +\cdots + a^k_{s_k} p^{r^k_{s_k}} g_{i^k_{s_k}}^0 + z_k \quad \mbox{ and } \quad o(h-z_k) = p^m.
\]
Set $t_k = \min\{ r^k_{1}, \dots , r^k_{s_k}\}$ and
\[
y'_k = a^k_{1} p^{r^k_{1} -t_k} g_{i^k_{1}}^0 +\cdots + a^k_{s_k} p^{r^k_{s_k}-t_k} g_{i^k_{s_k}}^0, \; k\ge 0.
\]
So $o\left( p^{t_k} y'_k \right) =p^m$ and $o\left(  y'_k \right) =p^{t_k +m}$ for all $k\ge 0$. By (\ref{e03-2}), the sequence $\{ y'_k \}_{k=0}^\infty$ is independent and $p^{t_k} y'_k +z_k =h \in H_0 + \langle e_0 \rangle$ for every $k\ge 0$.

{\it Case 2.2(a)}. Assume that $\exp H \le o(g_0)< o(g_1)<\dots$. Then, by (\ref{e03-2}), $\exp H \le o(y'_0)< o(y'_1)<\dots$, and hence  $t_0 < t_1 < \dots$.
Put
\[
g'_k = p^{t_{2k+1} -t_{2k}} y'_{2k+1} - y'_{2k}, \quad k\ge 0.
\]

{\it Case 2.2(b)}. Assume that $\exp H = o(g_k), \forall k\ge 0$. Then $t_k = t_{k+1}$ and $p^{t_k +m} =\exp H$ for every $k\ge 0$. Put
\[
g'_k = y'_{2k+1} - y'_{2k}, \quad k\ge 0.
\]

In all cases 2.2(a) and 2.2(b) we have the following:
\begin{itemize}
\item[($\alpha_1$)] the sequence $\{ g'_j \}_{j=0}^\infty$ is independent,
\item[($\alpha_2$)] the sequence $\{ g'_j \}_{j=0}^\infty$ satisfies  respectively one of the conditions (a) or (b),
\item[($\alpha_3$)] $o(g'_k) = o(y'_{2k})=p^{t_{2k} +m}$, for every $k\ge 0$,
\item[($\alpha_4$)] $p^{t_{2k}} g'_k = p^{t_{2k+1}} y'_{2k+1} -p^{t_{2k}} y'_{2k} =z_{2k+1} - z_{2k}   \in X_1$.
\end{itemize}
Set $Y'_k = \langle \left\{ g'_j \right\}_{j=k}^\infty \rangle, k\ge 0$.

{\it We claim that there is $k\ge 0$ such that}
\[
\left( Y'_k + X_1 \right) \cap \left( H_0 + \langle e_0 \rangle \right) =\{ 0\}.
\]

Assuming the converse we can find a nonzero element $h'$ of $H_0 + \langle e_0 \rangle$, a sequence of indices of the form
\[
 l^0_{1} < \cdots < l^0_{q_1} < l^1_{1} < \cdots < l^1_{q_2} < l^2_{1} < \cdots,
\]
a sequence of integers $b^k_{1}, \dots , b^k_{q_k}, (b_i^j, p)=1, k\ge 0$, a sequence of natural numbers $w^k_{1}, \dots , w^k_{q_k}, k\ge 0$, and a sequence $x_0, x_1, \dots$ in $X_1$ such that
\[
0\not= h' =  b^k_{1} p^{w^k_{1}} g'_{l^k_{1}} +\cdots + b^k_{q_k} p^{w^k_{q_k}} g'_{l^k_{q_k}} + x_k, \mbox{ for all } k\ge 0.
\]

Assume that  $w_i^k \ge t_{2l_i^k}$ for all $1\le i\le l^k_{q_k}$ beginning from some index $k_0$. Then, by ($\alpha_4$),
\[
0\not= h' =  b^k_{1} p^{w^k_{1}- t_{2l_1^k}}\left( p^{t_{2l_1^k}} g'_{l^k_{1}} \right) +\cdots + b^k_{q_k} p^{w^k_{q_k}-t_{2l_{q_k}^k}} \left( p^{t_{2l_{q_k}^k}}  g'_{l^k_{q_k}}\right) + x_k \in X_1,
\]
for every $k\ge k_0$. This  contradicts to (\ref{e03-1}).

Assume that there is an infinite set $I$ of indices $k$ such that for every $k\in I$ there exists index $1\le i_0^k \le q_k$ for which $w^k_{i_0^k} <t_{2l_{i_0^k}^k}$.
Set $\lambda_k = \min\{ w^k_{1}, \dots , w^k_{q_k}\}$ and  put
\[
y''_k = b^k_{1} p^{w^k_{1}-\lambda_k} g'_{l^k_{1}}  +\cdots + b^k_{q_k} p^{w^k_{q_k}-\lambda_k} g'_{l^k_{q_k}}, \; k\in I.
\]
By construction, $y''_k \in Y^1_k$ for every $k\ge 0$.
Thus, for all $k\in I$, we obtain the following: $y''_k \in Y_k^1,$ $0\not= p^{\lambda_k} y''_k  +x_k  =h' \in H_0 + \langle e_0 \rangle, $ and
\[
\begin{split}
o\left( p^{\lambda_k} y''_k \right) & = \max\left\{  o\left( y'_{2l^k_{1}}\right): p^{w_1^k}, \dots, o\left( y'_{2l^k_{q_k}}\right) : p^{w^k_{q_k}} \right\} \\
& \ge o\left( y'_{2l^k_{i_0^k}}\right) : p^{w^k_{i_0^k}} \ge  o\left( y'_{2l^k_{i_0^k}}\right): p^{t_{2l^k_{i_0^k}} -1} =(\mbox{by } (\alpha_3))= p^{m+1}.
\end{split}
\]
Since $I$ is infinite we obtained a contradiction to the choice of $m$.

Choose $k$ such that $\left( Y'_k + X_1 \right) \cap \left( H_0 + \langle e_0 \rangle \right) =\{ 0\}$. Taking into account ($\alpha_1$) and ($\alpha_2$), we can put
\[
H^1 =X_1, \quad g_j^1 =g'_{k+j}, j\ge 0, \; \mbox{ and } G^1 = \langle \left\{ g^1_j \right\}_{j=0}^\infty \rangle + H^1.
\]
So $(H_0 + \langle e_0 \rangle) \cap G^1 =\{ 0\}$ and we get over to the second step for $G^1, H^1$ and the independent sequence $\left\{ g^1_j \right\}_{j=0}^\infty$  satisfying  respectively one of the conditions (a) or (b).

Iterating this process, we can find a sequence $\{ H_i \}_{i=0}^\infty$ of finite nonzero subgroups of $H$ and an independent sequence $\{ e_i \}$  satisfying  respectively one of the conditions (a) or (b) such that
\[
H=\bigoplus_{i=0}^\infty H_i \; \mbox{ and } \; (H_k + \langle e_k\rangle)\cap \left( \sum_{i=k+1}^\infty ( H_i + \langle e_i\rangle)\right) = \{ 0\}, \mbox{ for every }  k\ge 0.
\]
Hence the sum $G_0 := \sum_{i=0}^\infty ( H_i + \langle e_i\rangle)$ is direct. Thus $G_0$ is desired. This completes the proof of the proposition.
\end{proof}

\begin{theorem} \label{td01}
Let $G$ be an Abelian group of infinite exponent and $H$ its nontrivial subgroup of finite exponent. Then at least one of the following assertions is fulfilled:
\begin{enumerate}
\item[(1)] Assume that $G$ contains an element $g$ of infinite order. Set $G_0 =\langle g\rangle +H$. Then $G_0 \cong \left( \mathbb{Z} \oplus H_0 \right) \oplus X$, where
    \begin{enumerate}
    \item $H_0$ is a finite (probably trivial) subgroup of $H$,
    \item $H=H_0 \oplus X$,
    \item $X\not= \{ 0\}$ if and only if $H$ is infinite. In such a case $X$ satisfies condition $(\Lambda)$.
    \end{enumerate}
\item[(2)] Assume that $G$ contains a subgroup $Y$ of the form $\mathbb{Z} (p^\infty)$. Set $G_0 =Y+H$. Then  $G_0 \cong \left( \mathbb{Z}(p^\infty) + H_0 \right) \oplus X$, where
    \begin{enumerate}
    \item $H_0$ is a finite (probably trivial) subgroup of $H$,
    \item $H=H_0 \oplus X$,
    \item $X\not= \{ 0\}$ if and only if $H$ is infinite. In such a case $X$ satisfies condition $(\Lambda)$.
    \end{enumerate}
\item[(3)] Assume that $G$ contains no neither elements of infinite order nor a subgroup of the form $\mathbb{Z} (p^\infty)$. Then $G$ has a subgroup $G_0$ of the form
\[
G_0=X \oplus \bigoplus_{i=0}^\infty (H_i + \langle e_i\rangle),
\]
where
\begin{enumerate}
\item the independent sequence $\{ e_i\}$ satisfies  the condition
    \[
    \exp H \le o(e_0)<o(e_1)<\dots ;
    \]
\item there is $M\le \infty$ such that $H_j$ is a finite nonzero subgroup of $G$ for every $0\le j <M$, and, if $M<\infty$, $H_j = \{ 0\}$ for each $j\ge M$;
\item $H= X \oplus \bigoplus_{i=0}^\infty H_i$;
\item $X\not= \{ 0\}$ if and only if $H$ is  uncountable. In such a case $X$ satisfies condition $(\Lambda)$.
\end{enumerate}
\end{enumerate}
\end{theorem}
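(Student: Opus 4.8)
The plan is to check that the three listed situations are exhaustive and then dispatch each one, with essentially all the work concentrated in the third. Since $\exp G=\infty$, exactly one of the following holds: $G$ has an element of infinite order; $G$ is torsion and contains a copy of $\mathbb{Z}(p^\infty)$ for some prime $p$; or $G$ is torsion, reduced (no $\mathbb{Z}(p^\infty)$-subgroup) and of infinite exponent. These are precisely the hypotheses of (1), (2), (3). In Case (1) I would observe that $\langle g\rangle\cong\mathbb{Z}$ is torsion-free while $H$ has finite exponent, so $\langle g\rangle\cap H=\{0\}$ and $G_0=\langle g\rangle\oplus H$; taking $H_0=H,\ X=\{0\}$ when $H$ is finite, and otherwise applying Lemma \ref{ld01} to write $H=H_0\oplus X$ with $H_0$ finite and $X$ satisfying $(\Lambda)$, gives $G_0\cong(\mathbb{Z}\oplus H_0)\oplus X$. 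Case (2) is Proposition \ref{pd01} directly: for $H$ infinite it supplies $H=H_0\oplus X$ and $G_0=(\mathbb{Z}(p^\infty)+H_0)\oplus X$, while for $H$ finite one takes $H_0=H,\ X=\{0\}$.

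For Case (3) the first task is to produce an independent sequence $A=\{g_i\}$ in $G$ with $\exp H\le o(g_0)<o(g_1)<\cdots$. Here two situations arise. If some primary component $G_p$ has infinite exponent, then $G_p$ is a reduced $p$-group of infinite exponent, so its basic subgroup must be unbounded (otherwise $G_p$ would split off a nonzero divisible summand, contradicting reducedness); choosing nonzero elements in infinitely many homogeneous components of that basic subgroup yields an independent sequence of strictly increasing orders, and discarding an initial segment arranges $o(g_0)\ge\exp H$. If instead every $G_p$ has finite exponent, then $\exp G=\infty$ forces infinitely many nonzero components, and selecting one nonzero element from each of infinitely many components attached to distinct primes exceeding $\exp H$ gives the sequence, independence being automatic.

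The second step splits $H$ through its (finite) primary decomposition $H=\bigoplus_q H_q$ and treats the component meeting $A$ separately from the rest. In the first situation $A\subseteq G_p$, the subgroup $\langle A\rangle+H_p$ is a $p$-group, and I would apply Proposition \ref{pd02} (when $H_p$ is uncountable) to peel off an uncountable $(\Lambda)$-summand into $X$ and reduce to a countable complement, then Proposition \ref{pd03}, case (a), to convert the countable $p$-part into the form $\bigoplus_i(H_i+\langle e_i\rangle)$. For every component $H_q$ with $q\ne p$ one has $\langle A\rangle\cap H_q=\{0\}$ (and in the second situation $\langle A\rangle\cap H=\{0\}$ outright); for these I invoke Lemma \ref{ld01}, sending an uncountable $(\Lambda)$-part into $X$ and distributing the finite and countable remainders among the $H_i$, each absorbing only finitely many cyclic pieces and hence staying finite, using that these pieces are independent from the $p$-elements $e_i$. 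Assembling yields $G_0=X\oplus\bigoplus_i(H_i+\langle e_i\rangle)$, and the bookkeeping is arranged so that a $(\Lambda)$-summand is routed to $X$ only from an uncountable component, giving $X\ne\{0\}$ exactly when $H$ is uncountable, i.e. condition (d).

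The hard part will be Case (3): both extracting an independent sequence of strictly increasing orders from a reduced infinite-exponent torsion group, and, simultaneously, controlling the countable/uncountable dichotomy demanded by (c)–(d) while gluing the $p$-primary part (governed by Propositions \ref{pd02} and \ref{pd03}) to the remaining primary components without disturbing directness. Cases (1) and (2), by contrast, are immediate once Lemma \ref{ld01} and Proposition \ref{pd01} are available.
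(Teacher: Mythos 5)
Your proposal is correct and follows essentially the same route as the paper: the same exhaustive trichotomy, Lemma \ref{ld01} for case (1), Proposition \ref{pd01} for case (2), and for case (3) a primary decomposition of $H$ combined with Propositions \ref{pd02}--\ref{pd03} for the primary component sharing a prime with the chosen independent sequence and Lemma \ref{ld01} for the remaining components. The only differences are organizational: the paper splits case (3) according to whether the part of $G$ supported at primes not dividing $\exp H$ has infinite exponent (taking the sequence there when it does, which lets it bypass Propositions \ref{pd02}--\ref{pd03} entirely in that subcase), and your bookkeeping for claim (3)(d) is if anything more careful than the paper's, whose Case 1 routes a countable $(\Lambda)$-summand into $X$ when $H$ is countably infinite.
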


\begin{proof} {\bf (1)} {\it Let $G$ contains an element $g$ of infinite order.} Set $G_0 = \langle g\rangle + H$. It is clear that the sum is direct, i.e., $G_0 = \langle g\rangle \oplus H$. If $H$ is infinite, by Lemma \ref{ld01}, $H$ can be represented in the form $H=H_0 \oplus X$, where $H_0$ is finite and $X$ satisfies condition $(\Lambda)$. So $G_0 \cong \left( \mathbb{Z} \oplus H_0 \right) \oplus X$.

{\bf (2)} {\it Let $G$ contains a subgroup $Y_1$ of the form $\mathbb{Z} (p^\infty)$.} Set $G_0 = Y_1 +H$. Then the assertion follows from Proposition \ref{pd01}.

{\bf (3)} {\it Let $G$ contain no neither elements of infinite order nor a subgroup of the form $\mathbb{Z} (p^\infty)$}.
For a prime $p$ let $H_p$ and $G_p$ be the $p$-component of $H$ and $G$ respectively. Since $H$ is of finite exponent, there are different primes $p_1,\dots, p_n, p_{n+1}, \dots, p_N,$ where $n<\infty$ and $n\le N\le \aleph_0$, such that
\[
H=\bigoplus_{i=1}^n H_{p_i} \mbox{ and } G=\bigoplus_{i=1}^n G_{p_i} \oplus G_1, \mbox{ where } G_1 =\bigoplus_{i=n+1}^N G_{p_i}.
\]

We distinguish between the following two cases.

{\it Case 1}. $\exp G_1 = \infty$.
In such a case, because of $G$ contains no a Pr\"{u}fer group, there is an independent sequence $\{ e_n \}_{n=0}^\infty$ in $G_{1}$, where $\exp H \le o(e_0)< o(e_1)<\dots$. By Lemma \ref{ld01}, $H=H_0 \oplus X$, where $H_0$ is finite (probably trivial) and $X$ satisfies condition $(\Lambda)$. Set
\[
G_0 = \left( (H_0 \oplus \langle e_0 \rangle)\oplus \bigoplus_{i=1}^\infty \langle e_i\rangle \right) \oplus X \mbox{ and } H_i =0, \mbox{ for every } i\ge 1.
\]
Then we obtain the desired.

{\it Case 2.  $\exp G_1 < \infty$.}
In this case there is $1\le l\le n$ such that $\exp G_{p_l} = \infty$. By Lemma \ref{ld01}, $\bigoplus_{i=1, i\not= l}^{n}  H_{p_i}  =H'_0 \oplus X'$, where $H_0$ is finite (probably trivial) and $X'$ satisfies condition $(\Lambda)$. Since $G$ contains no a Pr\"{u}fer group, there is an independent sequence $\{ g_i \}_{i=0}^\infty$ in $G_{p_l}$ satisfying the condition $\exp H \le o(g_0)<o(g_1)<\dots$. Applying  Proposition \ref{pd03}, we can find a subgroup $Y$ of $G_{p_l}$ of the form
\[
Y=X'' \oplus \bigoplus_{i=0}^\infty (H^i_{p_l} + \langle e_i\rangle),
\]
where  $H_{p_l}= X'' \oplus \bigoplus_{i=0}^\infty H^i_{p_l}, |H^i_{p_l}|<\infty$, $X''$  satisfies condition $(\Lambda)$, and  $\exp H\le o(e_0)< o(e_1)<\dots.$
Setting
\[
H^0 =H'_0 \oplus H^0_{p_l}, \; H^i = H^i_{p_l} \mbox{ for } i\ge 1, \; X= X' \oplus X'' \; \mbox{ and } \; G_0 = X\oplus \bigoplus_{i=0}^\infty (H^i + \langle e_i\rangle),
\]
we obtain the desired. The theorem is proved.
\end{proof}

\begin{theorem} \label{td02}
Let $G$ be an Abelian group of finite exponent and $H$ its nonzero subgroup. If $G$ contains a subgroup of the form $\mathbb{Z} (\exp H)^{(\omega)}$, then  $G$ has a subgroup $G_0$ of the form
\[
G_0=X \oplus \bigoplus_{i=0}^\infty (H_i + \langle e_i\rangle),
\]
where
\begin{enumerate}
\item the independent sequence $\{ e_i\}$ satisfies  the condition
    \[
    \exp H = o(e_0)=o(e_1)=\dots ;
    \]
\item there is $M\le \infty$ such that $H_j$ is a finite nonzero subgroup of $G$ for every $0\le j <M$, and, if $M<\infty$, $H_j = \{ 0\}$ for each $j\ge M$;
\item $H= X \oplus \bigoplus_{i=0}^\infty H_i$;
\item $X\not= \{ 0\}$ if and only if $H$ is  uncountable. In such a case $X$ satisfies condition $(\Lambda)$.
\end{enumerate}
\end{theorem}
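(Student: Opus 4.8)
The plan is to imitate the proof of Theorem \ref{td01}, case (3), Case 2: reduce to a single prime, apply Propositions \ref{pd02} and \ref{pd03}, and recombine. The only genuinely new ingredient is an exponent-matching device that keeps us inside condition (b) (all $e_i$ of the \emph{same} order $\exp H$) rather than condition (a).

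First I would carry out the primary reduction. Since $\exp H<\infty$, write $H=\bigoplus_{j=1}^n H_{p_j}$ for its finitely many nonzero $p$-primary components, put $p_j^{a_j}=\exp H_{p_j}$, so $\exp H=\prod_{j=1}^n p_j^{a_j}$. Taking $p_j$-components of the inclusion $\mathbb{Z}(\exp H)^{(\omega)}\le G$ yields $\mathbb{Z}(p_j^{a_j})^{(\omega)}\le G_{p_j}$; fix the associated independent sequence $A_j=\{g_i^{(j)}\}_{i\ge 0}$ with $o(g_i^{(j)})=p_j^{a_j}$. The hypothesis on $G$ is used precisely here, to secure independent elements of order \emph{exactly} $\exp H_{p_j}$. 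I will build inside each $G_{p_j}$ a subgroup $G_{0,p_j}=X^{(j)}\oplus\bigoplus_i\big(H_i^{(j)}+\langle e_i^{(j)}\rangle\big)$ of the required primary shape and then glue the pieces together.

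For fixed $j$ set $G'_j=\langle A_j\rangle+H_{p_j}$. If $H_{p_j}$ is countable, I apply Proposition \ref{pd03} in case (b) (legitimate since $o(g_i^{(j)})=p_j^{a_j}=\exp H_{p_j}$) to obtain $G_{0,p_j}=\bigoplus_i(H_i^{(j)}+\langle e_i^{(j)}\rangle)$ with $X^{(j)}=\{0\}$. If $H_{p_j}$ is uncountable, I first apply Proposition \ref{pd02} to $G'_j$, splitting $H_{p_j}=H_0^{(j)}\oplus X^{(j)}$ with $H_0^{(j)}$ countable, $X^{(j)}$ satisfying $(\Lambda)$, and $G'_j=(\langle A_j\rangle+H_0^{(j)})\oplus X^{(j)}$. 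The obstacle is that to feed $\langle A_j\rangle+H_0^{(j)}$ into Proposition \ref{pd03}(b) I need $\exp H_0^{(j)}=o(g_i^{(j)})=p_j^{a_j}$, which can fail if the top layer of $H_{p_j}$ was absorbed into $X^{(j)}$. This is exactly where $(\Lambda)$ rescues the argument: if $\exp H_0^{(j)}<p_j^{a_j}$ then $\exp X^{(j)}=p_j^{a_j}$, so $X^{(j)}$ has a summand $\mathbb{Z}(p_j^{a_j})^{(\kappa)}$ with $\kappa$ infinite; transferring one cyclic summand $\langle x^*\rangle$ of order $p_j^{a_j}$ from $X^{(j)}$ into $H_0^{(j)}$ leaves the leading Ulm--Kaplansky invariant equal to $\kappa$ (still infinite), so $X^{(j)}$ keeps condition $(\Lambda)$, keeps the decomposition $G'_j=(\langle A_j\rangle+H_0^{(j)})\oplus X^{(j)}$ intact, and now forces $\exp H_0^{(j)}=p_j^{a_j}$. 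Applying Proposition \ref{pd03}(b) to $\langle A_j\rangle+H_0^{(j)}$ then completes the $p_j$-primary piece.

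Finally I recombine. Set $X=\bigoplus_j X^{(j)}$, $H_i=\bigoplus_j H_i^{(j)}$ and $e_i=\sum_j e_i^{(j)}$. Coprimality of the $p_j^{a_j}$ gives $o(e_i)=\prod_j p_j^{a_j}=\exp H$, and projecting to primary components yields independence of $\{e_i\}$ from the independence of each $\{e_i^{(j)}\}_i$; moreover $\bigoplus_j\langle e_i^{(j)}\rangle=\langle e_i\rangle$, whence $\bigoplus_j(H_i^{(j)}+\langle e_i^{(j)}\rangle)=H_i+\langle e_i\rangle$. Rearranging the internal direct sum (the primary components being independent) gives $G_0:=\bigoplus_j G_{0,p_j}=X\oplus\bigoplus_i(H_i+\langle e_i\rangle)\le G$ and $H=\bigoplus_j H_{p_j}=X\oplus\bigoplus_i H_i$. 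Choosing $M=\max_j M_j$ makes $H_i$ nonzero exactly for $i<M$; $X$ satisfies $(\Lambda)$ as a finite direct sum of such groups over distinct primes, and $X\ne\{0\}$ iff some $H_{p_j}$, equivalently $H$, is uncountable. This verifies (1)--(4). The crux of the whole argument is the exponent-matching transfer described above.
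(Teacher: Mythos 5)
Your proof is correct and follows essentially the same route as the paper's: reduce to the $p$-primary components $H'_{p_i}$ and $G_{p_i}$ (using the hypothesis to extract $\mathbb{Z}(\exp H'_{p_i})^{(\omega)}\le G_{p_i}$), produce each primary piece $X^i\oplus\bigoplus_k(H^i_k+\langle e^i_k\rangle)$ from Propositions \ref{pd02} and \ref{pd03}(b), and reassemble with $e_k=e^1_k+\cdots+e^n_k$ using coprimality of the orders. The only real difference is your explicit exponent-matching transfer of one cyclic $(\Lambda)$-summand from $X^{(j)}$ into the countable part: the paper elides this by citing Proposition \ref{pd03} with the combined (countable-plus-$X$) conclusion, and in fact the construction in Proposition \ref{pd02} already yields $\exp H_0=\exp H$ (the subgroup $H'$ built there contains a representative of every equivalence class, hence an element of maximal order), but since that is not part of the stated conclusion of Proposition \ref{pd02}, your patch is a legitimate and harmless way to close the gap.
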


\begin{proof}
For a prime $p$ let $H'_p$ and $G_p$ be the $p$-component of $H$ and $G$ respectively. Since $G$ has  finite exponent, there are different primes $p_1,\dots, p_n, p_{n+1}, \dots, p_N,$ where $1\le n\le N<\infty$, such that
\[
H=\bigoplus_{i=1}^n H'_{p_i} \mbox{ and } G=\bigoplus_{i=1}^n G_{p_i} \oplus G_1, \mbox{ where } G_1 =\bigoplus_{i=n+1}^N G_{p_i}.
\]
By assumption, for every $1\le i\le n$, $G_{p_i} $ has a subgroup of the form  $\mathbb{Z} (\exp H'_{p_i})^{(\omega)}$.
So, for every $1\le i\le n$, by  Proposition \ref{pd03}, there are an independent sequence $e^i_0, e^i_1, \dots$  in $G_{p_i}$, with $\exp H'_{p_i} = o(e^i_0)=o(e^i_1)=\dots$,  finite (probably trivial beginning from some $M_i \le \aleph_0$) subgroups $H^i_0, H^i_{1}, \dots$ of $H'_{p_i}$ and, if $H'_{p_i}$ is uncountable, an uncountable subgroup $X^i$ of $H'_{p_i}$ such that for the subgroup $G^i_0 := \langle e^i_0, e^i_1,\dots\rangle +H'_{p_i}$ of $G_{p_i}$ we have
    \begin{enumerate}
\item $H'_{p_i}=X^i \oplus \bigoplus_{k=0}^\infty H_k^i $,
\item $G^i_0 = X^i \oplus \bigoplus_{k=0}^\infty (H_k^i + \langle e_k^i\rangle)$,
\item $X^i$ satisfies condition $(\Lambda)$.
    \end{enumerate}
Thus it is enough to put
\[
G_0= \bigoplus_{i=1}^n G^i_0, \; X=\bigoplus_{i=1}^n X^i, \; H_k =\bigoplus_{i=1}^n H^i_k \mbox{ and } e_k =e_k^1 +\cdots + e_k^n \mbox{ for } k\ge 0.
\]
The theorem is proved.
\end{proof}

\section{The bounded case} \label{s3}

In this section we prove Proposition \ref{p00} and Theorems \ref{t2} and \ref{t3}.

{\it Proof of Proposition \ref{p00}}.
Let $G$ be an infinite Abelian group. It is known \cite{CRT} that $G$ admits a non-trivial $TB$-sequence $\mathbf{u}$.  As it was noted in \cite{Ga1}, a sequence $\mathbf{u}$ is a $TB$-sequence if and only if it is a $T$-sequence and $(G, \mathbf{u})$ is maximally almost periodic. So  $\mathbf{n}(G, \mathbf{u})=0$. Thus $G$ admits a  complete non-trivial Hausdorff group topology with  trivial von Neumann radical.
$\Box$

In what follows we need some propositions.

Denote by $S_n =1+\dots +n$. Set $t(n) := \max\{ t:\; n\ge S_t\}$ and $\mu_n = S_{t(n)}$ for every $n\ge 1$. Then $n\left({\rm mod}\, \mu_n \right) <n < S_{n-1}$ for every $n>3$. Also we put $n ({\rm mod}\, 1) =0$ for every natural $n$.

\begin{pro} \label{l5}
{\it Let $G$ have the form $G= \bigoplus_{j=0}^\infty \left( \langle e_j \rangle + H_j \right)$, where \begin{enumerate}
\item there is $M\le \infty$ such that $H_j$ is a finite nonzero subgroup of $G$ with a basis $h^j_1,\dots, h^j_{a_j}$ for every $0\le j <M$, and, if $M<\infty$, $H_j = \{ 0\}$ for each $j\ge M$;
\item the subgroup $H:= \bigoplus_{j=0}^\infty H_j$ of $G$ has finite exponent;
\item $u_j := o(e_j)$ is finite for every $j\ge 0$.
\end{enumerate}
Enumerate all $h^j_i$ (they form a basis of $H$), where $1\le i\le a_j$ and $0\le j <M$, consistently setting:
\[
b_0 = h^0_1, \dots, b_{a_0 -1} = h^0_{a_0},  b_{a_0} = h^1_1, \dots, b_{a_0 +a_1 -1} = h^1_{a_1},  b_{a_0 + a_1} = h^2_{1}, \dots
\]
Suppose one of the following conditions is fulfilled:
\begin{enumerate}
\item[a)] $u_{0} =u_{1} =\dots :=u$ and $u$ is divided on $\exp H$;
\item[b)] $\exp H \le u_j$ for each $j\ge 0$, and $u_j \to \infty$.
\end{enumerate}
Denote by $\mathbf{d}=\{ d_n \} (n\ge 0)$ the following sequence:
\begin{enumerate}
\item[(i)] if $M<\infty$ and setting $c:= a_0 + \cdots +a_{M-1}$,
\[
\begin{split}
 d_{2n} & :  \; e_0, 2 e_0, \dots, (u_0 -1) e_0, \; e_{1}, 2 e_{1}, \dots, (u_{1} -1) e_{1}, \dots \\
 d_{1} & = b_0,   \; d_3= b_0 + e_1, \\
 d_{2n+1} & = b_{n\left({\rm mod}\, c \right) } + e_{S_{n-1} +1} +e_{S_{n-1} +2} + \dots + e_{S_{n}}, \; (n>1);
\end{split}
\]
\item[(ii)] if $M=\infty$,
\[
\begin{split}
 d_{2n} & :  \; e_0, 2 e_0, \dots, (u_0 -1) e_0, \; e_{1}, 2 e_{1}, \dots, (u_{1} -1) e_{1}, \dots \\ d_{1} & = b_0,   \; d_3= b_0 + e_1, \; d_5 = b_1 +(e_2 + e_3), \\
 d_{2n+1} & = b_{n\left({\rm mod}\, \mu_n \right) } + e_{S_{n-1} +1} +e_{S_{n-1} +2} + \dots + e_{S_{n}}, \; (n>2).
\end{split}
\]
\end{enumerate}
Then $\mathbf{d}$ is a $T$-sequence in $G$.}
\end{pro}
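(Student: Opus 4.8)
The plan is to verify the Protasov--Zelenyuk criterion (Theorem C): for every $k\ge 0$ and every nonzero $g\in G$ I must produce an $m$ with $g\notin A(k,m)$. Since $g$ has finite support, I fix $N$ with $g\in\bigoplus_{j=0}^{N}(\langle e_j\rangle+H_j)$; in particular the coefficient of $e_l$ in $g$ vanishes for every $l>N$. The whole argument rests on two structural features of $\mathbf{d}$ that dominate for large indices. First, every even term $d_{2n}$ is a single multiple $qe_j$ of one generator, and once $r$ passes the fixed even position at which $e_N$ is exhausted, every even $d_r$ is a multiple of some $e_j$ with $j>N$. Second, each odd term $d_{2n+1}$ carries the block sum $e_{S_{n-1}+1}+\cdots+e_{S_n}$, and since the index sets $\{S_{n-1}+1,\dots,S_n\}$ partition $\{1,2,\dots\}$, distinct odd terms involve pairwise \emph{disjoint} blocks of generators $e_l$, whose length is $n$ and whose indices exceed $N$ once $r\ge m$ is large.

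Accordingly I choose $m$ so large that (i) every even $d_r$ with $r\ge m$ is a multiple of some $e_j$ with $j>N$; (ii) every odd $d_r=d_{2n+1}$ with $r\ge m$ has block length $n>k$ and block indices $l>N$; and, in case b), (iii) $u_l>k+1$ for all such $l$. Suppose toward a contradiction that $g\in A(k,m)$, say $g=\sum_i n_i d_{r_i}$ with $m\le r_1<\cdots<r_s$ and $\sum_i|n_i|\le k+1$, so $s\le k+1$. In case a) I first discard every used odd term whose coefficient $n_i$ is divisible by $u$: since $o(e_l)=u$ and $o(b)\mid\exp H\mid u$ for each basis element $b$, such a term equals $0$ and may be dropped, after which all surviving odd coefficients satisfy $u\nmid n_i$.

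Now suppose some odd term survives. The blocks of the used odd terms are disjoint, and their total length is at least $(m-1)/2>k$, whereas the even terms, each a multiple of a single generator, can meet at most $s-1\le k$ of those block indices. Hence some block index $l_0>N$ is met by no even term and lies in exactly one block, so the coefficient of $e_{l_0}$ in $g$ equals the coefficient $n_{i_0}$ of the unique odd term whose block contains $l_0$. But $n_{i_0}\ne 0$ and $u_{l_0}\nmid n_{i_0}$ --- in case a) because $u\nmid n_{i_0}$, in case b) because $u_{l_0}>k+1\ge|n_{i_0}|$ --- so this coefficient is nonzero modulo $u_{l_0}$, contradicting $l_0>N$. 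Therefore no odd term occurs, and $g=\sum_i n_i d_{r_i}$ is a combination of multiples of generators $e_j$ with $j>N$; projecting onto $\bigoplus_{j\le N}(\langle e_j\rangle+H_j)$ forces $g=0$, the final contradiction. By Theorem C this shows $\mathbf{d}$ is a $T$-sequence.

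I expect the main obstacle to be precisely the block-counting step, together with the case-a) subtlety: there the orders $u_l=u$ do not grow, so one cannot conclude $u\nmid n_{i_0}$ from $|n_{i_0}|\le k+1$ alone, and the preliminary removal of odd terms with $u\mid n_i$ (which vanish because $\exp H\mid u$) is exactly what rescues the argument. The remaining ingredients --- bounding the number of surviving terms by $k+1$, the disjointness and unbounded length of the blocks, and the choice of $m$ --- are routine bookkeeping once the two structural observations are in place, and the distinction between the definitions in cases (i) and (ii) is irrelevant here since the basis-element summands $b_{\bullet}$ live in $H$ and never affect the cancellation of the high generators $e_l$.
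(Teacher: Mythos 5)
Your proof is correct and follows essentially the same route as the paper's: both verify the Protasov--Zelenyuk criterion by exploiting the disjointness and growing length of the blocks $e_{S_{n-1}+1}+\cdots+e_{S_n}$ attached to the odd terms, counting that at most $k$ single-generator even terms cannot cancel a block of length exceeding $k$, with case a) rescued by the divisibility $\exp H\mid u$ (the paper phrases this as ``$l_s$ is not divided on $u$'' where you instead discard the vanishing odd terms up front) and case b) by $u_j\to\infty$. The only cosmetic difference is that the paper isolates the odd term of largest index while you work with an arbitrary surviving odd term.
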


\begin{proof}  In both cases (i) and (ii) the proof is the same, so we prove the proposition only for the case ``$M=\infty$''.

Let $k\ge 0$ and
\[
g=(\lambda_1 e_{v_1}+h_{v_1}) + (\lambda_2 e_{v_2}+h_{v_2}) +\dots +(\lambda_q e_{v_q}+h_{v_q}) \not=0, \]
where  $v_1 <\dots <v_q$, $0\le \lambda_j < u_{v_j}$ and $h_{v_j}\in H_{v_j}$ for every $1\le j\le q$.
We need to show that the condition of the Protasov-Zelenyuk criterion is fulfilled, i.e., there exists a natural $m$ such that $g\not\in  A(k,m)$. By the construction of $\mathbf{d}$, there is $m' >3$ such that $d_{2n} = \lambda(n) e_{r(n)}$, where $r(n) > \max \{ v_q , 3\}$ for every $n>m'$. Assume that $g\in A(k,2m_0)$ for some $m_0 >m'$. Then
\[
g=  l_1 d_{2r_1 +1} + l_2 d_{2r_2 +1} +\dots + l_s d_{2r_s +1} +
 l_{s+1} d_{2r_{s+1}} +l_{s+2} d_{2r_{s+2}} +\dots + l_{h} d_{2r_{h}},
\]
where all summands are nonzero, $\sum_{i=1}^h |l_i | \le k+1$, $0<s\le h$ (by our choice of $m_0$) and
\[
\begin{split}
{} &  2m_0 < 2r_1 +1 < 2r_2 +1 < \dots < 2r_s +1 ,    \\
& 2m_0 \le 2r_{s+1} < 2r_{s+2} < \dots < 2r_{h}.
\end{split}
\]
Moreover, by construction, all the elements $d_{2n+1} - b_{n\left({\rm mod}\, \mu_n \right) }$ are independent. So, by the construction of the elements $d_{2n}$ and since $n\left({\rm mod}\, \mu_n \right) < S_{n-1}$ for every $n>3$, there is a subset $\Omega$ of the set $\{ s+1,\dots, h \}$ such that
\begin{equation} \label{7}
\begin{split}
& l_s (d_{2r_s +1} -  b_{r_s\left({\rm mod}\, \mu_{r_s} \right) })  +  \sum_{w \in \Omega} l_w d_{2r_w} \\
& = l_s (e_{S_{r_s -1} +1} +e_{S_{r_s -1} +2} + \dots + e_{S_{r_s}}) +\sum_{w \in \Omega} l_w d_{2r_w}=0.
\end{split}
\end{equation}

a) {\it Assume that $u_{0} =u_{1} =\dots =u$ and $u$ is divided on $\exp H$}. Set $m_0 = 4m'(k+1)$. Then $d_{2r_s +1} - b_{r_s\left({\rm mod}\, \mu_{r_s} \right) }$ contains exactly $r_s > m_0 -1 \ge 4k+4$
independent summands of the form $e_j$.
Since $l_s d_{2r_s +1}\not= 0$ and $u$ is divided on $\exp H$,  $l_s$ is not divided on $u$. So $l_s (d_{2r_s +1} -  b_{r_s\left({\rm mod}\, \mu_{r_s} \right) })$ contains  $r_s >4k+4$ non-zero independent  summands of the form $l_s e_j$. Since $h-s \le k$  and $l_w d_{2r_w}$ has the form $\lambda_j e_j$, the equality (\ref{7}) is impossible. Thus $g\not\in A(k,2m_0)$. So $\mathbf{d}$ is a $T$-sequence.

b) {\it Assume that $\exp H \le u_j$ for each $j\ge 0$ and $u_j \to \infty$}.  Choose $j' >m'$ such that $u_{j} > 2(k+1)$ for every $j> j'$. Set $m_0 = 4j'(k+1)$.
Then $d_{2r_s +1} - b_{r_s\left({\rm mod}\, \mu_{r_s} \right) }$ contains exactly $r_s > m_0 -1 \ge 4k+4$ summands of the form $e_{j}$ with $j\ge S_{r_s -1} +1 > m_0 > j'$. So, since $|l_s| \le k+1$, $l_s d_{2r_s -1}$ contains  $r_s >4(k+1)$ non-zero independent  summands of the form $l_s e_{j}$. Since $h-s \le k$ and $l_w d_{2r_w}$ has the form $a_j e_j$, the equality (\ref{7}) is impossible. Thus $g\not\in A(k,2m_0)$. So $\mathbf{d}$ is a $T$-sequence.
\end{proof}

\begin{lemma} \label{l6}
Let $G= \bigoplus_{j=0}^\infty G_j$, where $G_j$ is nonzero for all $j\ge 0$, be endowed with the discrete topology and let $H_j$ be a subgroup of $G_j$ for every $0\le j <\infty$.  Set $H= \bigoplus_{j=0}^\infty H_j$ and $Y= \bigoplus_{j=0}^\infty  H_j^\perp$, where $H_j^\perp$ is the annihilator of $H_j $ in $G_j^\wedge$. Then $H^\perp = \prod_{j=0}^\infty H_j^\perp$ and $Y$ is a dense subgroup of $H^\perp$.
\end{lemma}

\begin{proof}  Since $g=(g_j) \in H^\perp$ iff $(g, h_j)=(g_j, h_j)=1$ for every $h_j \in H_j$ and every $j\ge 0$, we obtain that $H^\perp = \prod_{j=0}^\infty H_j^\perp$. So $Y\subset H^\perp$ is a dense subgroup of $H^\perp$.
\end{proof}

\begin{pro} \label{p3}
{\it Let $G$ have the form $G= \bigoplus_{j=0}^\infty \left( \langle e_j \rangle + H_j \right)$, where \begin{enumerate}
\item there is $M\le \infty$ such that $H_j$ is a finite nonzero subgroup of $G$ for every $0\le j <M$, and, if $M<\infty$, $H_j = \{ 0\}$ for each $j\ge M$;
\item the subgroup $H:= \bigoplus_{j=0}^\infty H_j$ of $G$ has finite exponent;
\item $u_j := o(e_j)$ is finite for every $j\ge 0$.
\end{enumerate}
Suppose that  one of the following conditions holds
\begin{enumerate}
\item[(a)] $u_{0}=u_1 =\dots :=u$ and $u$ is divided on $\exp H$,
\item[(b)] $\exp H \le u_j$ for each $j\ge 0$ and $u_j \to \infty$.
\end{enumerate}
Then $G$ admits a complete Hausdorff group topology $\tau$ such that $\mathbf{n}(G,\tau) = H$.}
\end{pro}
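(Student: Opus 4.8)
The plan is to run the three-step scheme of Section \ref{s1}. \emph{First}, by Proposition \ref{l5} the sequence $\mathbf{d}$ (in form (i) or (ii), according to whether $M<\infty$ or $M=\infty$) is a $T$-sequence in $G$, so I would endow $G$ with the Protasov--Zelenyuk topology $\tau=\mathbf{d}$ generated by it. This topology is automatically complete by \cite[Theorem 2.3.11]{ZP2} and Hausdorff because $\mathbf{d}$ is a $T$-sequence, so only the identity $\mathbf{n}(G,\mathbf{d})=H$ remains. By Theorem D it suffices to show, purely algebraically, that $s_{\mathbf{d}}\big((G_d)^\wedge\big)^\perp=H$. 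Writing $G_j=\langle e_j\rangle+H_j$ and identifying $(G_d)^\wedge=\prod_{j}G_j^\wedge$, I would in fact establish the sharper statement
\[
s_{\mathbf{d}}\big((G_d)^\wedge\big)=\bigoplus_{j=0}^\infty H_j^\perp =: Y,
\]
the dense subgroup of $H^\perp$ appearing in Lemma \ref{l6}.

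\emph{Second}, the heart of the argument is the computation of $s_{\mathbf{d}}((G_d)^\wedge)=\{\chi:\ \chi(d_n)\to 1\}$, which I would carry out by reading off the two families of terms separately for a character $\chi=(\chi_j)_j$. The even terms $d_{2n}$ run through all elements $\lambda e_j$ with $1\le\lambda<u_j$, so $\chi(d_{2n})\to1$ is equivalent to $\max_{1\le\lambda<u_j}\abs{\chi(e_j)^\lambda-1}\to0$ as $j\to\infty$. Since $\chi(e_j)$ is a root of unity, if it were nontrivial of some order $d\ge 2$ then the powers $\chi(e_j)^\lambda$ would run through all nontrivial $d$-th roots of unity and hence contain a point at distance at least $\sqrt{3}$ from $1$; thus $\chi(d_{2n})\to1$ forces $\chi(e_j)=1$ for all but finitely many $j$ (this uses only $u_j\ge\exp H$ and works uniformly in cases (a) and (b)). Granting this, for large $n$ every $e_j$ occurring in the odd term $d_{2n+1}$ has large index, whence $\chi(d_{2n+1})=\chi(b_{n(\mathrm{mod}\,\mu_n)})$; because the residue $n(\mathrm{mod}\,\mu_n)$ takes each admissible value infinitely often (this is exactly the design of $\mu_n=S_{t(n)}$, resp.\ of $n(\mathrm{mod}\,c)$ when $M<\infty$), each value $\chi(b_i)$ appears infinitely often in a sequence tending to $1$, and being a root of unity must equal $1$. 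So $\chi\vert_H=1$. Conversely, if $\chi_j\in H_j^\perp$ for all $j$ and $\chi_j(e_j)=1$ for large $j$, then $\chi(d_n)=1$ eventually; and since $G_j=\langle e_j\rangle+H_j$, a coordinate $\chi_j\in H_j^\perp$ with $\chi_j(e_j)=1$ is trivial, so the $\chi$ in question are precisely the finitely supported tuples in $\prod_j H_j^\perp$, i.e.\ $Y$.

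\emph{Third}, I would conclude by duality. By Lemma \ref{l6}, $Y$ is dense in $H^\perp=\prod_j H_j^\perp$, and since annihilators depend only on closures,
\[
\mathbf{n}(G,\mathbf{d})=s_{\mathbf{d}}\big((G_d)^\wedge\big)^\perp=Y^\perp=(H^\perp)^\perp=H,
\]
the last equality holding because $H$ is a subgroup of the discrete group $G$ and hence coincides with its own bidual annihilator. This yields the required complete Hausdorff topology with $\mathbf{n}(G,\tau)=H$.

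The step I expect to be the main obstacle is the forward direction of the second paragraph, namely extracting from the single convergence condition $\chi(d_n)\to1$ the two separate conclusions that $\chi(e_j)=1$ eventually and $\chi\vert_H=1$. The even-term part rests on the quantitative root-of-unity estimate above, while the odd-term part depends on the combinatorial fact that $n(\mathrm{mod}\,\mu_n)$ visits each index infinitely often; verifying this cycling property of the specific sequence $\mu_n=S_{t(n)}$ and checking that for large $n$ the tail contributions $e_{S_{n-1}+1}+\dots+e_{S_n}$ are genuinely annihilated by $\chi$ are the delicate points.
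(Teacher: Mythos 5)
Your proposal is correct and follows essentially the same route as the paper: take the $T$-sequence of Proposition \ref{l5}, invoke Theorem D and Lemma \ref{l6}, extract $\chi(e_j)=1$ for large $j$ from the even terms via the root-of-unity estimate, extract $\chi\vert_H=1$ from the odd terms via the cycling of $n(\mathrm{mod}\,\mu_n)$, and conclude by the density of $\bigoplus_j H_j^\perp$ in $H^\perp$. The only cosmetic difference is that you prove the exact equality $s_{\mathbf{d}}((G_d)^\wedge)=\bigoplus_j H_j^\perp$, while the paper contents itself with the sandwich $\bigoplus_j H_j^\perp\subseteq s_{\mathbf{d}}((G_d)^\wedge)\subseteq H^\perp$, which already suffices for the annihilator computation.
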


\begin{proof}
We prove the proposition for the case ``$M=\infty$''.

Let $\mathbf{d}=\{ d_n \}$ be the $T$-sequence which was defined in Proposition \ref{l5}. By Theorem D and Lemma \ref{l6}, it is enough to show that
$\bigoplus_{j=0}^\infty H_j^\perp \subseteq s_{\mathbf{d}} ((G_{d})^\wedge)) \subseteq H^\perp$.

The inclusion $\bigoplus_{j=0}^\infty H_j^\perp \subseteq s_{\mathbf{d}} ((G_{d})^\wedge))$ is trivial. Let us show the second one.
Let $\omega =(g_0, g_1, \dots)\in s_{\mathbf{d}} ((G_{d})^\wedge)$. By definition, there exists $N\in \mathbb{N}$ such that $|1- (d_{2n}, \omega)|<0.1, \forall n>N$. Thus, there is $N_0 >N$ such that $|1- ( l e_{j}, \omega)|= |1- (l e_{j}, g_j)| <0.1, \forall l=1,\dots, u_j -1,$ for every $j>N_0$. This means that $g_j \in \langle e_j \rangle^\perp$ for every $j>N_0$.  For every $j\ge 0$ and each $1\le i\le a_j$, put $k=a_0 +\dots +a_{j-1} +i$ (we assume that $a_{-1}=0$) and set $\mu_n = S_{t(n)}$. Since  $(d_{2(\mu_n +k)+1}, \omega) \to 1$ at $n\to\infty$ too and $(d_{2(\mu_n +k)+1}, \omega) = (h_i^j, g_j)$ for all sufficiently large $n$, we obtain that $g_j \in \langle h_i^j \rangle^\perp$ for any $j\ge 0$ and  each $1\le i\le a_j$. So $g_j \in H_j^\perp$ for every $j\ge 0$. Thus $ s_{\mathbf{d}} ((G_{d})^\wedge) \subseteq H^\perp$ by Lemma \ref{l6}.
\end{proof}

{\it Proof  of Proposition \ref{p4}}. Let us endow the group $\bigoplus_{\alpha \in I} G_\alpha$ under the asterisk group topology \cite{Kap} and $\prod_{\beta \in J} H_\beta$ under the product group topology. By the Kaplan theorem \cite{Kap}, $G^\wedge = \prod_{\alpha \in I} G_\alpha^\wedge \times \bigoplus_{\beta \in J} H_\beta^\wedge$. Since $G_\alpha^\wedge$ and  $H_\beta^\wedge$ are trivial for all $\alpha\in I$ and $\beta\in J$, then  $G^\wedge$ is also trivial.
$\Box$

\begin{cor} \label{co1}
{\it Let an Abelian group $X$ satisfy condition $(\Lambda)$. Then $X$ admits a (complete, if $X$ is countably infinite) MinAP group topology.}
\end{cor}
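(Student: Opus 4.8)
The plan is to reduce everything to the single building block supplied by Assertion (1): a group of the form $\mathbb{Z}(p^r)^{(\omega)}$ carries a complete MinAP topology (the result of \cite{ZP1}). Spelling out condition $(\Lambda)$, we have $X=\bigoplus_{k=1}^{n}\mathbb{Z}(p_k^{r_k})^{(\kappa_k)}$ with each cardinal $\kappa_k$ infinite, and the idea is to assemble a MinAP topology on $X$ from copies of this building block by means of Proposition \ref{p4}.

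First I would dismantle each summand $\mathbb{Z}(p_k^{r_k})^{(\kappa_k)}$. Since $\kappa_k$ is infinite, $\kappa_k=\kappa_k\cdot\aleph_0$, so regrouping the basis yields
\[
\mathbb{Z}(p_k^{r_k})^{(\kappa_k)}\cong\bigoplus_{i\in\kappa_k}\mathbb{Z}(p_k^{r_k})^{(\omega)}.
\]
Consequently $X$ is isomorphic to a direct sum, indexed by the disjoint union $\bigsqcup_{k=1}^{n}\kappa_k$, of groups each of the form $\mathbb{Z}(p^r)^{(\omega)}$; by Assertion (1) every one of them admits a (complete) MinAP topology. Applying Proposition \ref{p4} with no product factor (that is, $J=\emptyset$, equipping the whole direct sum with the asterisk topology) then produces a MinAP group topology on $X$, which settles the first assertion of the corollary.

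It remains to secure \emph{completeness} when $X$ is countably infinite. In that case each $\kappa_k$ is forced to be countable, hence equal to $\omega$, so $X=\bigoplus_{k=1}^{n}\mathbb{Z}(p_k^{r_k})^{(\omega)}$ is a \emph{finite} direct sum. For a finite index set the direct sum coincides with the direct product and the asterisk topology with the product topology. Equipping each of the finitely many factors with its complete MinAP topology from Assertion (1) and $X$ with the product topology, one gets that $X$ is MinAP by the dual-group computation in the proof of Proposition \ref{p4} (its dual is the product of the trivial duals of the factors), and complete because a finite product of complete groups is complete.

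The argument is short, and the only delicate point is this last step: Proposition \ref{p4} as stated only delivers MinAP, so completeness in the countable case must be read off separately from the fact that for a finite index set one is genuinely dealing with a finite product, whose completeness is inherited from the factors furnished by \cite{ZP1}. In the uncountable case no completeness is claimed, so there the bare application of Proposition \ref{p4} suffices.
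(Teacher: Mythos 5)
Your argument is correct and follows essentially the same route as the paper: reduce via Proposition \ref{p4} to the building block $\mathbb{Z}(p^r)^{(\omega)}$, the only difference being that the paper obtains the complete MinAP topology on this block from Proposition \ref{p3} (taking $H_i:=\langle e_i\rangle$) rather than by citing \cite{ZP1}. Your explicit observation that in the countably infinite case $X$ is a \emph{finite} direct sum, hence a finite product of complete groups and therefore complete, supplies a detail the paper leaves implicit.
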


\begin{proof}
By Proposition \ref{p4}, it is enough to show that a group $Y$ of the form $\bigoplus_{i=0}^\infty \langle e_i \rangle$, where $o(e_0)=o(e_1)=\dots$, admits a complete MinAP group topology. But this immediately follows from Proposition \ref{p3} if we put $H_i :=\langle e_i \rangle $ for all $i\ge 0$.
\end{proof}

{\it Proof of Theorem \ref{t2}}. {\it Necessity.} We essentially follow  the arguments of Remus (see \cite{Com}). Let
\[
\exp H =p_1^{b_1}  \dots  p_l^{b_l} \mbox{ and } \exp G =p_1^{n_1}  \dots  p_l^{n_l} \cdot p_{l+1}^{n_{l+1}}  \dots  p_t^{n_t},
\]
where $p_1, \dots, p_t$ are distinct prime integers.

Assuming the converse, we obtain that there is a $1\le j \le l$ such that $G$ contains only a finite subset of independent elements $g$ for which $o(g) = p_j^a$ with $a\ge b_j$. Set $m := \exp G /p_{j}^{n_j - b_j+1}$ and $\pi : G\to G, \, \pi (g) = m g$. Then $\pi (H) \not= 0$ and, by our hypotheses, $\pi (G)$ is finite.

Now let $\tau$ be an arbitrary Hausdorff group topology on $G$. Since $\pi (G)$ is finite, ${\rm Ker} (\pi)$ is open and closed and, hence, dually closed and dually embedded \cite{Nob}. So, by Lemma \ref{l1}, $\mathbf{n} (G,\tau) \subseteq {\rm Ker} (\pi)$. Hence $H\not= \mathbf{n} (G,\tau)$.
This is a contradiction.

{\it Sufficiency}. By Theorem \ref{td02} and the reduction principle, we can restrict ourselves only to the case
\[
H=X \oplus \bigoplus_{i=0}^\infty H_i, \quad G = X \oplus \bigoplus_{i=0}^\infty (H_i + \langle e_i\rangle),
\]
where  $X$ satisfies condition $(\Lambda)$ and $o(e_i)=\exp H$ for every $i\ge 0$. Taking into account that the von Neumann radical of a product of topological groups is the product of their von Neumann radicals, $G$ admits a (complete if $H$ is countable) Hausdorff group topology $\tau$ such that $\mathbf{n}(G,\tau) = H$ by Proposition \ref{p3} and Corollary \ref{co1}. The theorem is proved.
$\Box$

{\it Proof of Theorem \ref{t3}}. The theorem immediately follows from Theorems \ref{td01} and B, the reduction principle, Proposition \ref{p3}, and Corollary \ref{co1}.
$\Box$

\section{The unbounded case} \label{s4}

We start with the following simple observation:

\begin{pro} \label{p6}
{\it Let a countably infinite Abelian group $G$ embed into a compact metrizable Abelian group $X$ with the dense image and $\pi : G\to X$ be an embedding. Let a sequence $\mathbf{d}=\{ d_n\}$ in $G$ be such that $\pi^\wedge (X^\wedge)\subseteq s_{\mathbf{d}} \left( (G_{d})^\wedge \right)$, where $\pi^\wedge$ is the conjugate homomorphism of $\pi$. Then $\pi (d_n)$ converges to zero in $X$. In particular, $\mathbf{d}$ is a $TB$-sequence.}
\end{pro}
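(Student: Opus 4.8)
The plan is to translate the hypothesis into a statement about pointwise convergence of characters on $X$ and then to invoke Pontryagin duality for the compact group $X$. First I would unpack the hypothesis. For $\chi\in X^\wedge$ the conjugate homomorphism acts by $\pi^\wedge(\chi)=\chi\circ\pi$, and since $G$ carries the discrete topology in $(G_d)^\wedge$, this is indeed a character of $G_d$. Reading the definition of $s_{\mathbf d}$ with ambient group $(G_d)^\wedge$ and with $\mathbf d=\{d_n\}$ regarded as characters of $(G_d)^\wedge$ via the canonical evaluation pairing, the condition $\pi^\wedge(\chi)\in s_{\mathbf d}\left((G_d)^\wedge\right)$ means precisely that $\bigl(\pi^\wedge(\chi)\bigr)(d_n)=\chi(\pi(d_n))\to 1$. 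Hence the inclusion $\pi^\wedge(X^\wedge)\subseteq s_{\mathbf d}\left((G_d)^\wedge\right)$ says exactly that $\chi(\pi(d_n))\to 1$ for every $\chi\in X^\wedge$.

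Next I would identify this with convergence in $X$. Since $X$ is compact, $X^\wedge$ is discrete, and the canonical evaluation map $X\to (X^\wedge)^\wedge$ is a topological isomorphism onto the dual of the discrete group $X^\wedge$. Because the compact subsets of a discrete group are finite, the compact-open topology on $(X^\wedge)^\wedge$ is nothing but the topology of pointwise convergence on $X^\wedge$. Consequently a sequence converges to $0$ in $X$ if and only if it converges coordinatewise, that is, $x_n\to 0$ in $X$ iff $\chi(x_n)\to 1$ for every $\chi\in X^\wedge$. Applying this with $x_n=\pi(d_n)$ and using the previous paragraph yields $\pi(d_n)\to 0$ in $X$. (Metrizability of $X$ makes $X^\wedge$ countable and $X$ first countable, which is convenient for sequential arguments, but the coordinatewise criterion for sequences holds in any case.)

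Finally, for the $TB$-sequence assertion I would transport the topology of $X$ back to $G$. Because $\pi$ is an embedding, declaring $\pi^{-1}(U)$ open for each open $U\subseteq X$ defines a Hausdorff group topology $\tau$ on $G$ under which $\pi$ is a homeomorphism onto its image; and since $\pi(G)$ is dense in the compact group $X$, the completion of $(G,\tau)$ is $X$, so $(G,\tau)$ is precompact. As $\pi$ is a homeomorphism onto its image, $d_n\to 0$ in $(G,\tau)$ is equivalent to $\pi(d_n)\to 0$ in $X$, which has just been established. Thus $\mathbf d$ converges to zero in a precompact Hausdorff group topology on $G$, i.e., $\mathbf d$ is a $TB$-sequence.

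I expect the only real content to lie in the middle step: recognizing that the hypothesis is literally the convergence criterion in the compact bidual $(X^\wedge)^\wedge\cong X$. This is precisely Pontryagin duality for compact groups together with the fact that compact-open topology on the dual of a discrete group reduces to pointwise convergence; the unpacking of $\pi^\wedge$ and the pullback of the topology are routine bookkeeping with the definitions.
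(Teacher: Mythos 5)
Your argument is correct and is essentially the paper's own proof: the paper likewise observes that the hypothesis says $(\pi(d_n),y)=(d_n,\pi^{\wedge}(y))\to 1$ for every $y\in X^{\wedge}$ and concludes $\pi(d_n)\to 0$ from the fact that a compact group carries the initial topology induced by its characters; the $TB$-sequence claim then follows by pulling the topology back along the embedding, exactly as you do. You have merely spelled out the duality step ($X\cong (X^{\wedge})^{\wedge}$ with pointwise convergence on the discrete dual) that the paper leaves implicit.
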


\begin{proof}   By definition, we have $(\pi (d_n), y)= (d_n, \pi^\wedge (y))\to 1$ for every $y\in X^\wedge$. But this is possible only if $\pi (d_n) \to 0$.
\end{proof}

The following proposition  is an easy generalization of Theorem 2.1.5 of \cite{ZP2}:
\begin{pro} \label{p5}
{\it Let $G$ be a subgroup of an Abelian Hausdorff topological group $S$. Assume that $q$ sequences $\{ a_n^j \}_{n\in \mathbb{N}}, 0\le j\le q-1$, in $G$ are such that $\{ a_n^j \}_{n\in \mathbb{N}}$ converges to an element $b^j \in S$. If $\langle b^0, \dots, b^{q-1}  \rangle \cap G= \{ 0\}$, then the following sequence
\[
d_{nq+j} = a_{n}^j, \mbox{ for every } n\ge 1 \mbox{ and } 0\le j\le q-1,
\]
is a $T$-sequence in $G$.}
\end{pro}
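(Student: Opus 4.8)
The plan is to verify the Protasov--Zelenyuk criterion (Theorem C) directly for the interleaved sequence $\mathbf{d}$. So fix $k\ge 0$ and a nonzero $g\in G$; the goal is to produce an $m$ with $g\notin A(k,m)$. Assume, towards a contradiction, that $g\in A(k,m)$ for \emph{every} $m$. Then for each $m$ there is a representation $g=\sum_{i=1}^{s} n_i d_{r_i}$ with $m\le r_1<\dots<r_s$, $n_i\in\mathbb{Z}\setminus\{0\}$ and $\sum_{i=1}^s |n_i|\le k+1$ (here $s$, the indices $r_i$, and the coefficients $n_i$ all depend on $m$).

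The first step is to split each term inside $S$ into a ``limit part'' and a small remainder. For an index $r=nq+j$ with $0\le j\le q-1$ set $b_{(r)}:=b^{j}$, so that $d_r-b_{(r)}=a_n^{j}-b^{j}$. Since each of the finitely many sequences $\{a_n^j\}_n$ converges to $b^j$, it follows that $d_r-b_{(r)}\to 0$ in $S$ as $r\to\infty$. Put $\beta_m:=\sum_{i=1}^s n_i\,b_{(r_i)}\in\langle b^0,\dots,b^{q-1}\rangle=:B$; then $g-\beta_m=\sum_{i=1}^s n_i\bigl(d_{r_i}-b_{(r_i)}\bigr)$. Given a neighbourhood $U$ of $0$ in $S$, I would choose a symmetric neighbourhood $V$ with $V+\dots+V\subseteq U$ ($k+1$ summands) and then take $m$ so large that $d_r-b_{(r)}\in V$ for all $r\ge m$; since every $r_i\ge m$ and $\sum_i|n_i|\le k+1$, the element $g-\beta_m$ is a sum of at most $k+1$ members of $V$, hence lies in $U$. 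Thus $\beta_m\to g$ in $S$.

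The decisive point is that the $\beta_m$ are confined to a finite set. Collecting coefficients by residue, $\beta_m=\sum_{j=0}^{q-1}c_j b^j$ with $c_j\in\mathbb{Z}$ and $\sum_{j=0}^{q-1}|c_j|\le\sum_i|n_i|\le k+1$, so every $\beta_m$ belongs to the finite set $F:=\{\sum_{j=0}^{q-1}c_jb^j:\ c_j\in\mathbb{Z},\ \sum_j|c_j|\le k+1\}\subseteq B$. Because $S$ is Hausdorff, $F$ is closed and discrete, so a sequence valued in $F$ that converges to $g$ must be eventually equal to $g$; hence $g\in F\subseteq B$. But $g\in G$, so $g\in B\cap G=\{0\}$, contradicting $g\neq 0$. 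Therefore $g\notin A(k,m)$ for some $m$, and by Theorem C the sequence $\mathbf{d}$ is a $T$-sequence in $G$.

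I expect the crux to be the interplay exhibited in the last two paragraphs rather than any single estimate: continuity of the group operations together with the uniform bound $\sum_i|n_i|\le k+1$ forces $\beta_m\to g$, while that same bound traps the $\beta_m$ inside a finite subset of $B$. It is precisely this finiteness---and not any closedness of $B$ in $S$---that lets the bare hypothesis $\langle b^0,\dots,b^{q-1}\rangle\cap G=\{0\}$ finish the argument; without it one would only be able to conclude $g\in\overline{B}\cap G$, which is far weaker. (Equivalently, one could phrase the same idea as transporting to $G$ the quotient topology of $(G+B)/B$ via the embedding into a Hausdorff quotient, but the criterion-based route keeps the finiteness observation in plain sight.)
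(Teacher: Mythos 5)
Your proof is correct and uses the same essential ingredients as the paper's: the uniform bound $\sum_i|n_i|\le k+1$ confines the ``limit combinations'' $\sum_j c_j b^j$ to a finite subset $F$ of $\langle b^0,\dots,b^{q-1}\rangle$, Hausdorffness separates $g$ from $F$, and convergence $a_n^j\to b^j$ controls the remainder. The paper merely runs the argument directly (choosing $U$ with $g\notin F+U$ and showing $A(k,m_0)\subseteq F+U$) rather than by contradiction, so the two proofs are contrapositives of one another.
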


\begin{proof}  Let $0\not= g\in G$ and $k\ge 0$. Since $\langle b^0, \dots, b^{q-1}  \rangle \cap G= \{ 0\}$, we can choose a symmetric neighborhood $U$ of zero in $S$ such that
\[
g\not\in \bigcup_{ |t_j| <k+2, \, 0\le j\le q-1} (t_0 b^0 + \dots + t_{q-1} b^{q-1} +U).
\]
Let $V$ be a symmetric neighborhood of zero in $S$ such that $(k+1)V\subset U$. Choose a $m>0$ such that $a_n^j \in b^j + V$ for all $n\ge m$ and $0\le j\le q-1$. Set $m_0 =(m+1)q$. Then for every $l_1,\dots, l_s$ such that $\sum_{i=1}^s |l_i| \le k+1$ and for any $r_1 <\dots <r_s, m_0 < r_1,$ we have
\[
\begin{split}
{} & l_1 d_{r_1}  + \dots + l_s d_{r_s} = l_1 a_{\frac{1}{q} (r_1 - r_1 ({\rm mod}\, q))}^{r_1 ({\rm mod}\, q)} + l_2 a_{\frac{1}{q} (r_2 - r_2 ({\rm mod}\, q))}^{r_2 ({\rm mod}\, q)} +\dots + l_1 a_{\frac{1}{q} (r_s - r_s ({\rm mod}\, q))}^{r_s ({\rm mod}\, q)}  \\
{} & \in l_1 b^{r_1 ({\rm mod}\, q)} + (|l_1|)V + l_2 b^{r_2 ({\rm mod}\, q)} + (|l_2|)V + \dots + l_s b^{r_s ({\rm mod}\, q)} + (|l_s|)V \\
{} & \subset  \bigcup_{ |t_j| <k+2, \, 0\le j\le q-1} (t_0 b^0 + \dots + t_{q-1} b^{q-1} +U).
\end{split}
\]
Hence $g \not\in A(k,m_0 )$ and the condition of the Protasov-Zelenyuk criterion (Theorem B) holds. So $\mathbf{d}=\{ d_n \}$ is a $T$-sequence.
\end{proof}

\begin{cor}  \cite[Theorem 2.1.5]{ZP2} \label{co3}
{\it Let $G$ be a subgroup of an Abelian Hausdorff topological group $S$.  If a sequence $\{ a_n\}$ converges to zero and $\{ b_n\}$ converges to an element $b \in S$ satisfying the condition $\langle b \rangle \cap G= \{ 0\}$. Then the following sequence
\[
d_{2n} = a_{n} \mbox{ and } d_{2n+1} = b_{n}, \mbox{ for every } n\ge 1,
\]
is a $T$-sequence in $G$. }
\end{cor}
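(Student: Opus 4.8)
The plan is to derive this statement directly from Proposition \ref{p5} by specializing to $q = 2$. First I would set $b^0 = 0$ and $b^1 = b$, and introduce the two auxiliary sequences $a_n^0 := a_n$ and $a_n^1 := b_n$. By hypothesis $\{a_n^0\} = \{a_n\}$ converges to $0 = b^0$ and $\{a_n^1\} = \{b_n\}$ converges to $b = b^1$, so both convergence assumptions of Proposition \ref{p5} are satisfied, with both sequences lying in $G$ as required.

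Next I would check that the algebraic condition of Proposition \ref{p5} reduces to the hypothesis of the corollary. Since $b^0 = 0$, the subgroup generated by the limit points is $\langle b^0, b^1 \rangle = \langle 0, b \rangle = \langle b \rangle$. Hence the requirement $\langle b^0, b^1 \rangle \cap G = \{0\}$ in Proposition \ref{p5} is exactly the assumption $\langle b \rangle \cap G = \{0\}$ imposed here, and nothing further needs to be verified on the algebraic side.

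Finally I would confirm that the interleaved sequence produced by Proposition \ref{p5} is precisely the one in the statement. With $q = 2$ the proposition forms $d_{2n+j} = a_n^j$ for $j \in \{0,1\}$ and $n \ge 1$, which yields $d_{2n} = a_n^0 = a_n$ and $d_{2n+1} = a_n^1 = b_n$, i.e.\ exactly the sequence $\mathbf{d}$ of the corollary. Therefore Proposition \ref{p5} applies and $\mathbf{d}$ is a $T$-sequence in $G$. I do not expect any genuine obstacle here, as the corollary is merely the case $q = 2$ of the preceding proposition; the only point deserving a word of care is the remark that permitting one limit point to equal $0$ leaves the generated subgroup unchanged, so that the two formulations of the independence condition coincide.
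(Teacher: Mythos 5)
Your proposal is correct and matches the paper's intent exactly: the corollary is presented as an immediate consequence of Proposition \ref{p5}, obtained by the $q=2$ specialization with $b^0=0$, $b^1=b$, so that $\langle b^0,b^1\rangle=\langle b\rangle$ and the interleaving $d_{2n}=a_n$, $d_{2n+1}=b_n$ is precisely the sequence of the statement. Nothing further is needed.
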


The following two theorems are very important for our considerations:
\begin{itemize}
\item[] {\bf Theorem E.} \cite{DiK} {\it For every countably infinite subgroup $G$ of a compact metrizable Abelian group $X$ there is a sequence $\mathbf{d}=\{ d_n\}$ in $X^\wedge$ such that $G= s_{\mathbf{d}} (X)$.}
\item[] {\bf Theorem F.} \cite{DiS} {\it Every countably infinite unbounded Abelian group $G$ is isomorphic to a dense subgroup of the connected  second countable group $\mathbb{T}^\omega$.}
\end{itemize}

\begin{theorem} \label{t6}
Let a countably infinite Abelian group $G$ embed into a compact connected second countable group $X$ with the dense image. Then $G$  admits a complete Hausdorff minimally almost periodic group topology.
\end{theorem}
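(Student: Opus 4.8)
The plan is to construct one $T$-sequence $\mathbf{d}=\{d_n\}$ in $G$ and give $G$ the Protasov--Zelenyuk topology $(G,\mathbf{d})$; this topology is complete automatically, so only minimal almost periodicity has to be arranged. Writing $\pi\colon G\to X$ for the dense embedding and $K:=(G_d)^\wedge$, Theorem D reduces the problem to the single identity $s_{\mathbf{d}}(K)=\{0\}$, since then $\mathbf{n}(G,\mathbf{d})=s_{\mathbf{d}}(K)^{\perp}=G$. Here $K$ is compact and, as $G$ is countable, metrizable, with $\widehat K=G_d$; moreover $\pi^{\wedge}\colon X^{\wedge}\to K$ is injective by density of $G$, so I regard $X^{\wedge}$ as a countable subgroup of $K$, infinite because $X$ is, and torsion free because $X$ is connected.

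I would assemble $\mathbf{d}$ by interleaving two sequences and invoking Corollary \ref{co3}. First, apply Theorem E to the compact metrizable group $K$ and its countably infinite subgroup $X^{\wedge}$ to obtain a sequence $\{a_n\}\subseteq\widehat K=G$ with $s_{\{a_n\}}(K)=X^{\wedge}$; in particular $\pi^{\wedge}(X^{\wedge})\subseteq s_{\{a_n\}}(K)$, so Proposition \ref{p6} yields $\pi(a_n)\to 0$ in $X$. Second, using that a compact connected metrizable Abelian group is monothetic, fix a topological generator $b$ of $X$, so that $\overline{\langle b\rangle}=X$. Because $G$ is countable while, for each $k\neq 0$, the solution set of $kx\in G$ is a countable union of cosets of the proper closed subgroup $X[k]$ (meager, as $X$ is connected), $b$ can be chosen with $\langle b\rangle\cap G=\{0\}$. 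Picking $b_n\in G$ with $b_n\to b$ (density) and setting $d_{2n}=a_n$, $d_{2n+1}=b_n$, Corollary \ref{co3} guarantees that $\mathbf{d}$ is a $T$-sequence in $G$.

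Finally I would compute $s_{\mathbf{d}}(K)$. Since $\chi(d_n)\to 1$ is equivalent to $\chi(a_n)\to 1$ and $\chi(b_n)\to 1$ holding simultaneously, one has $s_{\mathbf{d}}(K)=s_{\{a_n\}}(K)\cap s_{\{b_n\}}(K)=X^{\wedge}\cap s_{\{b_n\}}(K)$. For $\chi\in X^{\wedge}$ continuity on $X$ gives $\chi(b_n)\to\chi(b)$, so $\chi\in s_{\{b_n\}}(K)$ iff $\chi(b)=1$; as $\langle b\rangle$ is dense, $\chi(b)=1$ forces $\chi=0$. Thus $s_{\mathbf{d}}(K)=\{0\}$, and Theorem D yields $\mathbf{n}(G,\mathbf{d})=G$, i.e.\ $(G,\mathbf{d})$ is a complete Hausdorff MinAP topology.

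The hard part will be the two places where everything is tied together through the ambient group $X$: securing a topological generator $b\notin G$ with $\langle b\rangle\cap G=\{0\}$ (where connectedness of $X$ is used both for monotheticity and for the meagerness of the torsion cosets $X[k]$), and ensuring that Theorem E returns $s_{\{a_n\}}(K)$ equal to $X^{\wedge}$ \emph{exactly}, since both the convergence $\pi(a_n)\to 0$ demanded by Corollary \ref{co3} and the final vanishing $X^{\wedge}\cap s_{\{b_n\}}(K)=\{0\}$ rest on this precise identification rather than on a mere inclusion.
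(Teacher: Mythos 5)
Your proposal is correct and follows essentially the same route as the paper: Theorem E applied to $X^{\wedge}\subseteq (G_d)^{\wedge}$ to get $\{a_n\}$ with $s_{\{a_n\}}((G_d)^\wedge)=\pi^\wedge(X^\wedge)$, Proposition \ref{p6} for $\pi(a_n)\to 0$, a topological generator $b$ with $\langle b\rangle\cap\pi(G)=\{0\}$ feeding Corollary \ref{co3}, and Theorem D to conclude $\mathbf{n}(G,\mathbf{d})=G$. The only (cosmetic) divergence is in securing $b$: the paper cites $m_X(\Omega)=1$ and a cardinality remark, whereas you exclude the countably many cosets of the subgroups $X[k]$ by a smallness argument — if anything, your version spells out what the paper leaves implicit.
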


\begin{proof} Let $\pi : G\to X$ be an embedding.
By Theorem E, we can choose a sequence $\mathbf{a}=\{ a_n\}$ in $G$ such that $s_{\mathbf{a}} \left( (G_{d})^\wedge \right) = \pi^\wedge (X^\wedge)$. By Proposition \ref{p6}, $\pi (a_n)\to 0$ in $X$.

Denote by $\Omega$ the set of all elements $x$ such that $\langle x\rangle$ is dense in $X$. Then $m_X (\Omega) =1$, where $m_X$ is the normalized Haar measure on $X$  \cite[25.27]{HR1}. Thus $\Omega$ is continual and hence there exists $b\in \Omega$ such that $\pi (G)\cap \langle b\rangle = \{ 0\}$. Let a sequence $\mathbf{b} =\{ b_n\}$ in $G$ be such that  $\pi (b_n)$ converges to $b$ in $X$. Then, by Corollary \ref{co3}, the sequence
\[
d_{2n} = a_{n} \mbox{ and } d_{2n+1} = b_{n}, \mbox{ for every } n\ge 1,
\]
is a $T$-sequence in $G$.

By Theorem  D, it is enough to prove that $s_{\mathbf{d}} \left( (G_{d})^\wedge \right) = \{ 0\}$. Let $x\in s_{\mathbf{d}} \left( (G_{d})^\wedge \right)$. Then $(d_{2n}, x) = (a_{n}, x) \to 1$. So, by our choice of the sequence $\mathbf{a}$, $x= \pi^\wedge (y)$ for some $y\in X^\wedge$. Thus
$(b_n, \pi^\wedge (y)) = (\pi (b_n), y) \to (b,y)$. Since also $(d_{2n+1}, x)= (b_n, \pi^\wedge (y))\to 1$, we have $(b,y)=1$. Hence $(k b ,y)=1$ for every $k\in \mathbb{Z}$. Since $\langle b\rangle$ is dense in $X$, we must have $y=0$. So $x=0$.
\end{proof}

{\it Proof of Theorem \ref{t1}}. If $H$ is bounded, the theorem follows from Theorem \ref{t2}. If $H$ is  unbounded, the theorem immediately follows from Theorems F and \ref{t6}.
$\Box$

{\it Proof of Theorem \ref{t4}}. Let  $\Omega_1$ be the set of all indices $i$ such that $G_i$ is bounded and $\Omega_2$ be the set of all  $i$ for which $G_i$ is unbounded.

{\it Case 1.} $\Omega_1 =\emptyset$. By Theorem \ref{t1}, any $G_i$ admits a MinAP group topology. Thus $G$ also admits a MinAP group topology by Proposition \ref{p4}.

{\it Case 2.}  $\Omega_1 \not=\emptyset$. For every $i\in \Omega_1$, the group $G_i$ is a direct sum of finite cyclic groups \cite[11.2]{Fuc}:
\[
G_i = \bigoplus_{(p,n)\in A_i}  \mathbb{Z}(p^n)^{(k_{(p,n)}^i)},
\]
where $p\in P$, $n\in \mathbb{N}$ and $A_i$ is finite.

Set $C= \cup_{i\in \Omega_1} A_i$ and $k_{(p,n)} =\sum_{i\in \Omega_1} k_{(p,n)}^i $. Put $C_1 := \{ (p,n) \in C : \; k_{(p,n)} <\infty \}$ and $C_2 = C \setminus C_1$. Then we can represent the group $G$ in the following form
\[
G = \left( \bigoplus_{(p,n)\in C_1} \mathbb{Z} (p^n)^{(k_{(p,n)})} \right) \oplus \bigoplus_{(p,n)\in C_2} \mathbb{Z} (p^n)^{(k_{(p,n)})} \oplus \bigoplus_{i\in \Omega_2} G_i .
\]
Denote the first summand by $H$.

{\it Subcase $2(a)$.} Assume that $H$ is either countably infinite (and hence unbounded) or trivial. Then, by Corollaries \ref{c1} and \ref{c2} and Proposition \ref{p4}, $G$ admits a MinAP group topology.

{\it Subcase $2(b)$.} Let $H$ be nonzero and finite. If $\Omega_2 \not=\emptyset$, then we have
\[
G= (H\oplus G_{i_0} ) \oplus \bigoplus_{(p,n)\in C_2} \mathbb{Z} (p^n)^{(k_{(p,n)})} \oplus \bigoplus_{i\in \Omega_2, i\not= i_0} G_i ,
\]
and also, by Corollaries \ref{c1} and \ref{c2} and Proposition \ref{p4}, $G$ admits a MinAP group topology.

If $\Omega_2 =\emptyset$, then, since $G$ is unbounded, $C_2$ is infinite. So
\[
G= \left( \bigoplus_{(p,n)\in C_1} \mathbb{Z} (p^n)^{(k_{(p,n)})} \oplus \bigoplus_{(p,n)\in C_2} \mathbb{Z} (p^n)  \right) \oplus \bigoplus_{(p,n)\in C_2} \mathbb{Z} (p^n)^{(k_{(p,n)} -1)},
\]
and, by Corollaries \ref{c1} and \ref{c2} and Proposition \ref{p4}, $G$ admits a MinAP group topology too. $\Box$

\begin{rem} \label{r1}
{\it Let us show that  Question 7 can be reduced to this question  only for the special groups in it.} By Theorem 25.25 of \cite{HR1}, every infinite compact Abelian group $G$ is algebraically isomorphic to a group of the form
\[
\prod_{p\in P} \left[ \Delta_p^{\mathfrak{a}_p} \times \prod_{n=1}^\infty \left(\mathbb{Z} (p^{n} ) \right)^{k_{n,p}} \right] \times \bigoplus_{p\in P} \mathbb{Z}(p^\infty)^{(\mathfrak{b}_p)} \times \mathbb{Q}^{(\mathfrak{n})}.
\]
Set $E_1 = \{ (n,p) :\; 0< k_{n,p} <\infty \}$ and $E_2 = \{ (n,p) :\;  k_{n,p}=\infty \}$. Then $G$ has the form
\[
\left(\prod_{(n,p) \in E_1} \left(\mathbb{Z} (p^{n} ) \right)^{k_{n,p}} \right) \times \prod_{p\in P} \Delta_p^{\mathfrak{a}_p} \times \prod_{(n,p) \in E_2} \left(\mathbb{Z} (p^{n} ) \right)^{k_{n,p}}  \times  \bigoplus_{p\in P} \mathbb{Z}(p^\infty)^{(\mathfrak{b}_p)} \times \mathbb{Q}^{(\mathfrak{n})}.
\]

{\it Case} 1.   $|E_1 |=\infty$.
Then $G$ is a product of the special groups in the question, $\left(\mathbb{Z} (p^{n} ) \right)^{k_{n,p}}, (n,p) \in E_2, $  and a sum of unbounded countable groups   $\mathbb{Z}(p^\infty)$ and the full rational groups $\mathbb{Q}$. By Theorem \ref{t2} and \ref{t1}, all groups $\left(\mathbb{Z} (p^{n} ) \right)^{k_{n,p}}, (n,p) \in E_2 $, $\mathbb{Z}(p^\infty)$ and $\mathbb{Q}$ admit a MinAP group topology. So the assertion follows from Proposition \ref{p4}.

{\it Case} 2. {\it $E_1$ is finite}. Then either at least one of the numbers $\mathfrak{a}_p , \mathfrak{b}_p$ or $\mathfrak{n}$  is nonzero or $| E_2 |= \infty$. So we can adjoin the first finite group $\prod_{(n,p) \in E_1} \left(\mathbb{Z} (p^{n} ) \right)^{k_{n,p}}$  either to one of the group $\Delta_p$, $\mathbb{Z}(p^\infty)$, $\mathbb{Q}$ or to  the group $\prod_{(n,p) \in E_2} \mathbb{Z} (p^{n})$. Therefore also in this case, if the special groups admit a MinAP group topology, then $G$ admits a MinAP group topology too. $\Box$
\end{rem}

\bibliographystyle{amsplain}

\end{document}